\newcommand{\e}{\epsilon}
\renewcommand{\gamma}{\upgamma}
\newcommand{\N}{\mathbb{N}}
\newcommand{\Z}{\mathbb{Z}}
\newcommand{\cQ}{\mathcal{Q}}
\newcommand{\R}{\mathbb{R}}
\renewcommand{\d}{\delta}
\newcommand{\cH}{\mathcal{H}}
\renewcommand{\R}{\mathbb{R}}
\newcommand{\cR}{\mathcal{R}}
\newcommand{\cL}{\mathcal{L}}
\renewcommand{\S}{\mathbb{S}}
\renewcommand{\d}{\mathrm{d}}
\DeclareMathOperator\diam{diam}
\DeclareMathOperator\dist{dist}
\DeclareMathOperator\card{card}
\DeclareMathOperator\Mod{Mod}
\DeclareMathOperator\Capa{Cap}
\DeclareMathOperator\len{length}
\newcommand{\gsim}{\gtrsim}
\newcommand{\comm}[1]{\ignorespaces}
\newtheorem{Thm}{Theorem}[section]
\newtheorem*{Thm*}{Theorem}
\newtheorem{Prop}[Thm]{Proposition}
\newtheorem*{Prop*}{Proposition}
\newtheorem{Lem}[Thm]{Lemma}
\newtheorem*{Lem*}{Lemma}
\newtheorem{thm}{Theorem}[section]
\newtheorem*{thm*}{Theorem}
\newtheorem{prop}[thm]{Proposition}
\newtheorem*{prop*}{Proposition}
\newtheorem{cor}[thm]{Corollary}
\newtheorem{lem}[thm]{Lemma}
\newtheorem*{lem*}{Lemma}
\newtheorem*{Cla*}{Claim}
\theoremstyle{remark}
\newtheorem*{rem*}{Remark}
\theoremstyle{definition}
\newtheorem*{Def*}{Definition}
\numberwithin{equation}{section}
\title{Bi-Lipschitz arcs in metric spaces with controlled geometry}
\author{Jacob Honeycutt}
\author{Vyron Vellis}
\author{Scott Zimmerman}
\thanks{V.V. was partially supported by NSF DMS grants 1952510 and 2154918.}
\date{\today}
\subjclass[2020]{Primary 30L05; Secondary 30L99, 51F99}
\keywords{bi-Lipschitz extension, Poincar\'e inequality, uniform domain}
\address{Department of Mathematics\\ The University of Tennessee\\ Knoxville, TN 37966}
\email{jhoney12@vols.utk.edu}
\address{Department of Mathematics\\ The University of Tennessee\\ Knoxville, TN 37966}
\email{vvellis@utk.edu}
\address{Department of Mathematics, The Ohio State University at Marion, 1465 Mt Vernon Ave, Marion, OH 43302}
\email{zimmerman.416@osu.edu}
\begin{document}
\maketitle

\begin{abstract}
We generalize a bi-Lipschitz extension result of David and Semmes from Euclidean spaces to complete metric measure spaces with controlled geometry (Ahlfors regularity and supporting a Poincar\'e inequality). In particular, we find sharp conditions on metric measure spaces $X$ so that any bi-Lipschitz embedding of a subset of the real line into $X$ extends to a bi-Lipschitz embedding of the whole line. Along the way, we prove that if the complement of an open subset $Y$ of $X$ has small Assouad dimension, then it is a uniform domain. Finally, we prove a quantitative approximation of continua in $X$ by bi-Lipschitz curves.
%
%Given a $Q$-Ahlfors-regular, complete metric measure space $(X,d,\mu)$ supporting a $p$-Poincar\'e inequality with $1 \leq p < Q-1$ and a bi-Lipschitz map $f:\mathbb{R} \supset A \to X$, we verify the existence of a bi-Lipschitz extension $F$ of $f$ on an interval containing $A$. These assumptions are sharp. Moreover, the bi-Lipschitz constant of the extension depends only on the data from $f$ and $X$. Along the way, we verify that the complement of a set $Y \subset X$ with small enough Assouad dimension is a uniform domain, and we prove a quantitative approximation of continua in $X$ by bi-Lipschitz curves.
\end{abstract}

%\doublespacing

\section{Introduction}
Given metric spaces $(X,d_X)$ and $(Y,d_Y)$, a map $f:X \to Y$ is said to be {\em an $L$-bi-Lipschitz embedding} (or simply {\em $L$-bi-Lipschitz} or just {\em bi-Lipschitz}) if there is a constant $L \geq 1$ such that
$$
L^{-1} d_X(x_1,x_2) \leq d_Y(f(x_1),f(x_2)) \leq L d_X(x_1,x_2)
$$
for all
$x_1,x_2 \in X$.
A \emph{bi-Lipschitz arc} in a metric space $X$
is the image of an interval in the real line $\R$ under a bi-Lipschitz map.

We will consider the following question:
given a set $E \subset X$ which is the image of a subset of $\R$ under a bi-Lipschitz map,
is $E$ contained in a bi-Lipschitz arc?
If $E$ is any finite subset of $\R^n$, the answer is trivially ``yes''.
For general sets $E \subset \R^n$,
the question was answered in the positive when $n \geq 3$
by the following extension theorem of David and Semmes \cite{David:1991}:
\begin{thm}[{\cite[Proposition 17.1]{David:1991}}]\label{thm:DS}
Let $n\geq 3$ be an integer, let $A \subset \R$, and let $f:A \to \R^n$ be a bi-Lipschitz embedding.
Then there exists a bi-Lipschitz extension $F:\R \to \R^n$.
\end{thm}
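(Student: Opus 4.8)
\textbf{Reduction to filling complementary intervals.}
The plan is to extend $f$ across $\R\setminus A$ one ``complementary interval'' at a time. Since a bi-Lipschitz map is uniformly continuous, $f$ extends to an $L$-bi-Lipschitz map of $\overline A$, so I may assume $A$ is closed (and $\emptyset\neq A\neq\R$); then $f(A)$ is closed in $\R^n$. Write $\R\setminus A$ as a disjoint union of open intervals, at most two of which are unbounded. For a bounded component $I=(a,b)$ put $\ell=b-a$, $p=f(a)$, $q=f(b)$, so $L^{-1}\ell\le|p-q|\le L\ell$. The extension will be $F=f$ on $A$ and $F=\gamma_I$ on $\overline I$, where $\gamma_I\colon\overline I\to\R^n$ is a curve with $\gamma_I(a)=p$ and $\gamma_I(b)=q$. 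I will construct, for some constant $C=C(n,L)$, curves $\gamma_I$ that are $C$-bi-Lipschitz, stay inside $B(p,C\ell)$, and satisfy the separation bound
\[
\dist\Big(\gamma_I(t),\ f(A)\cup\bigcup_{J\neq I}\gamma_J(\overline J)\Big)\ \ge\ C^{-1}\min(t-a,\,b-t),\qquad t\in\overline I .
\]
An unbounded component is a half-line and is handled in the same way, with its free endpoint routed off to infinity through $\R^n\setminus f(A)$; I will not dwell on this case.

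\textbf{Why these properties suffice.}
Granting such $\gamma_I$, the map $F$ is $C'$-bi-Lipschitz with $C'=C'(n,L)$. The upper bound follows from the first two properties and the endpoint matching (given $x,y$ in two different pieces, insert the common endpoint of a complementary interval lying between them on the line). For the lower bound: if $x,y\in A$ it is the bi-Lipschitz bound for $f$; if $x\in I$ and $y\in J$ with $I\neq J$ it is exactly the separation bound; and if $x\in A$, $y\in I=(a,b)$, one compares $|F(x)-F(y)|$ with $\dist(x,I)$ and with $\min(y-a,b-y)$, invoking the ball condition when $\dist(x,I)$ dominates, and the separation bound (recall $F(x)\in f(A)$) together with $L^{-1}$-bi-Lipschitzness of $f$ when it does not — a short case split on the relative sizes of $\dist(x,I)$, $y-a$, $b-y$.

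\textbf{Building the arcs; the role of $n\ge3$.}
The set $f(A)$, being the bi-Lipschitz image of a subset of $\R$, has Assouad dimension at most $1$, and $1<n-1$ precisely when $n\ge3$. Hence $\Omega:=\R^n\setminus f(A)$ is connected and, by the uniform-domain result advertised in the abstract (removing a set of small Assouad dimension from a space with controlled geometry leaves a uniform domain), $\Omega$ is a uniform domain with data depending only on $n$ and $L$; in particular it is quasiconvex and has the cigar (John-type) connectivity property, and inside any ball $B(x,r)$ there is a sub-ball of radius $c_nr$ disjoint from $f(A)$. This is exactly the input that fails when $n=2$, where $f(A)$ can cut the plane into thin fjords trapping the points $p$ and forcing detours of unbounded relative length, so that $n\ge3$ is essential. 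I build the $\gamma_I$ by generations, generation $m$ consisting of those $I$ with $\ell_I\in[2^{-m},2^{-m+1})$. For $I=(a,b)$ in generation $m$: choose free balls $B(p^*,c\ell)\subseteq B(p,\tfrac12\ell)$ and $B(q^*,c\ell)\subseteq B(q,\tfrac12\ell)$ disjoint from $f(A)$ and from the finitely many coarser arcs already present near $I$, then connect $p\to p^*\to q^*\to q$ by quasiconvex/cigar curves run inside $\Omega$ minus those coarser arcs, arranged so that the result stays in $B(p,C\ell)$, has length $\lesssim\ell$, meets the other arcs only at designated junctions, and concatenates to a $C$-bi-Lipschitz curve. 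That free space of definite relative size always remains rests on the observation that near any point and at any scale $r$, the union of $f(A)$ with all coarser arcs meets $B(x,r)$ in only boundedly many bi-Lipschitz pieces, each of diameter $\lesssim r$: for $f(A)$ this is the covering count, and for a coarser arc $\gamma_J$ it holds because $\gamma_J^{-1}(B(x,r))$ has diameter $\lesssim r$ (bi-Lipschitzness), while disjointness of the intervals of $J$'s generation, pulled back through the $L$-bi-Lipschitz map $f$, lets only $O_L(1)$ of them come near $x$.

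\textbf{Main obstacle.}
The genuine difficulty is the bookkeeping in the last step: organizing the choices of free balls and connecting cigars, across all generations and all clusters, so that a single constant $C$ works in the separation bound — the delicate point being the interaction of a small new arc with the coarser (but possibly very close) arcs accumulating at a point where infinitely many complementary intervals pile up. Once the generations are ordered and the near-interactions are localized, everything else is soft.
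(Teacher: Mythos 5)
You have the right ingredients (porosity and the uniform‑domain result for $\R^n\setminus f(A)$ when $\dim_A f(A)\le 1<n-1$, a filtration into "generations," straightening of quasiconvex curves into bi-Lipschitz arcs), but the decomposition you chose hides the hard part. You assign a single $C$-bi-Lipschitz arc $\gamma_I:\overline I\to\R^n$ to each \emph{whole} complementary interval $I=(a,b)$, with $\gamma_I(a)=f(a)$, $\gamma_I(b)=f(b)$ and the separation bound $\dist(\gamma_I(t),\cdot)\gtrsim\min(t-a,b-t)$. Near $t=a$ this arc must thread through $\R^n\setminus f(A)$ at \emph{every} scale $s\in(0,\ell_I)$ simultaneously, staying bi-Lipschitz and at distance $\gtrsim s$ from $f(A)$ and from all the other arcs accumulating at $f(a)$. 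That is a genuinely multi-scale problem of the same type as the original theorem, and your construction of $\gamma_I$ ("connect $p\to p^*\to q^*\to q$ by cigar curves \dots and concatenates to a $C$-bi-Lipschitz curve") does not address it: the cigar property of a uniform domain gives a curve from $f(a)$ to $p^*$ whose distance to $f(A)$ grows linearly in arclength, but it does \emph{not} give a lower bound on $|\sigma(s)-\sigma(s')|$ when $s,s'$ lie in the same dyadic annulus, so the curve need not be bi-Lipschitz after arclength reparametrization; and the straightening lemma (the analogue of \cite[Lemma 4.2]{LW20}, Lemma~\ref{Prop:Modi} here) only operates at a single scale, since it replaces the curve inside a tube of fixed relative width, and any such tube around the part of the cigar near $f(a)$ would cross $f(A)$.

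This is precisely what the Whitney decomposition is for, and it is the step your proposal omits. David--Semmes (and this paper, in its generalization) decompose $I\setminus A$ into Whitney intervals $\mathcal Q_i$ with $\diam\mathcal Q_i\simeq\dist(\mathcal Q_i,A)$, so that on each piece the target curve lives at a single scale: on the middle third of $\mathcal Q_i$ it stays at distance $\gtrsim\diam\mathcal Q_i$ from $f(A)$ (Lemma~\ref{Lem:modulus}, Proposition~\ref{Prop:middlethird}), the straightening lemma applies cleanly, and the reference points $\pi(x)\in\R^n\setminus f(A)$ at scale $\dist(x,A)$ (Proposition~\ref{Prop:Endpoint}) act as designated junctions. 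Your "generations by $\ell_I$" is the analogue of the filtration in Lemma~\ref{Lem:DS}, but organizing interactions \emph{between} complementary intervals does not help with the cascade of scales \emph{inside} one interval near its endpoints. If you replace "one arc per complementary interval" by "one arc per Whitney interval" and re-run your scheme, you essentially recover the paper's proof; as written, the sentence "concatenates to a $C$-bi-Lipschitz curve" conceals the core difficulty rather than resolving it.
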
 
MacManus \cite{MacManus:1995} extended the result of David and Semmes to the case $n=2$, which is much more difficult since intersecting lines in $\R^3$ may be easily modified so that they no longer intersect, but this is not the case in $\R^2$.
One may view these extension results as rougher versions of the classical Whitney Extension Theorem \cite{W34}; while the maps considered here are analytically weaker (as they are bi-Lipschitz rather than differentiable), they are metrically and topologically stronger.

Theorem \ref{thm:DS} is a special case of a more general result in \cite{David:1991} where $A\subset \R^d$ and $n\geq 2d+1$.
The main motivation behind that result was to establish the equivalence of the 
boundedness of certain singular operators on $\R^n$ via quantitative rectifiability.
More precisely, Theorem \ref{thm:DS} was used in \cite{David:1991} to show that, when  $n \geq 3$, every Ahlfors $1$-regular set $A \subset \R^n$ (see \eqref{eq:reg} for the definition of Ahlfors regularity) which admits a corona decomposition
(roughly speaking, $A$ can be decomposed into a collection of subsets which are well-approximated by Lipschitz graphs and a collection of subsets which are not, and both of these collections have controlled measure)
contains ``big pieces'' of bi-Lipschitz arcs
i.e. for any $\e>0$, there exists an $M>0$ such that, 
for any $x\in A$ and any $R>0$, there is an $M$-bi-Lipschitz embedding $\rho:\R \to \R^n$ such that
\[ |E \cap (B(x,R)\setminus \rho(\R))| \leq \e R.\]

Another application of Theorem \ref{thm:DS} 
is in the problem of the 
\emph{bi-Lipschitz rectifiability} of sets in Euclidean spaces.
In other words, one hopes to classify those subsets of $\R^n$ that are contained in a bi-Lipschitz arc.
While the classical characterization of the Lipschitz rectifiability of sets in Euclidean spaces has been completely resolved \cite{Jones90, Ok92}, 
the problem of bi-Lipschitz rectifiability remains open mainly due to topological constraints.
Theorem \ref{thm:DS} can be used to show that, if a set $E \subset \R^n$ has Assouad dimension less than 1,
then $E$ is bi-Lipschitz rectifiable; see \cite[Corollary 3.5]{Badger:2019} for a different approach.
See Section~\ref{sec:prelim} for the definition of the Assouad dimension.

In this article, we generalize Theorem~\ref{thm:DS} to the setting in which Euclidean spaces $\R^n$ are replaced by a large class of metric measure spaces.
There are two main difficulties in this generalization.
Firstly, the target metric space $X$ must contain many of rectifiable curves, and this notion of ``many'' must be understood quantitatively.
A notable example (and, in fact, the initial motivation for this project) is the \emph{Heisenberg group $\mathbb{H}$}
in which the classical Whitney Extension Theorem for curves has been well-studied recently; see \cite{Z18, PSZ19, Z22, SZ22}.
We will not define the Heisenberg group here but only recall that it is a geodesic space homeomorphic to $\R^3$, and there exists a distribution $H:\R^3 \to \textbf{Gr}(2,\R^3)$ such that if a curve $\gamma:[0,1] \to \mathbb{H}$ is rectifiable, then it is differentiable almost everywhere and $\dot{\gamma}(t) \in H_{\gamma(t)}$ for almost every $t$.
This fact implies that there must be many fewer rectifiable curves in $\mathbb{H}$ than in $\R^3$.
Secondly, the proof in the Euclidean case relies on the existence of differentiable bump functions $\phi:\R\to\R^n$ with controlled derivatives, and we cannot hope to recover this idea in a general metric space.

%In this article we generalize Theorem \ref{thm:DS} for metric measure spaces with controlled geometry. 
%Recall that a metric space is \emph{geodesic} if for any two points there exists a path joining them with length equal to the distance of the two points. 
The class of metric measure spaces to which the bi-Lipschitz extension result will be generalized will have two properties.
The first is \emph{Ahlfors regularity}: we say that a metric measure space $(X,d,\mu)$ is Ahlfors $Q$-regular (or simply $Q$-regular) if the measure of any ball of radius $r$ is comparable to $r^Q$. The second property is the existence of a \emph{Poincar\'e inequality}. Such an inequality roughly states that, if we use $u_B$ to denote the average value of a function $u:X \to \R$ on a ball $B$, then the average of the variation $|u-u_B|$ is controlled by the average of a ``weak derivative'' of $u$ on $B$.
See Section \ref{sec:prelim} for all relevant definitions.
It is known that Ahlfors regular spaces supporting a Poincar\'e inequality must contain quantitatively many rectifiable curves. Moreover, such spaces admit a notion of differentiation \cite{Cheeger99}.

The following is the main result of this paper:
\begin{thm}\label{thm:1}
Let $(X,d,\mu)$ be a $Q$-regular, complete metric measure space supporting a $p$-Poincar\'e inequality for some $1< p \leq Q-1$. If $A \subset \R$ and $f:A \to X$ is a bi-Lipschitz embedding, then $f$ extends to a bi-Lipschitz embedding $F:I \to X$ where $I$ is the smallest closed interval containing $A$.
\end{thm}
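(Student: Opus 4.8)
The plan is to build the extension $F$ on the complementary intervals of $A$ inside $I$, treating each "gap" $(a,b)$ with $a,b \in A$ separately (and the two possible unbounded-from-within rays if $I$ is not all of $\R$, though since $I$ is the smallest closed interval containing $A$ there are only bounded gaps together with at most countably many of them). For a single gap, the task reduces to the following: given two points $p = f(a)$ and $q = f(b)$ in $X$ with $d(p,q) \asymp |a-b|/L$, connect $p$ to $q$ by a curve $\gamma$ in $X$ of length comparable to $|a-b|$ which is itself bi-Lipschitz with uniform constant, and — crucially — which stays uniformly far (on the scale of $|a-b|$) from $f(A)$ except near its endpoints, so that concatenating the pieces over all gaps still yields a globally bi-Lipschitz map. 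The first step, therefore, is to establish that $X$ contains, between any two points, a bi-Lipschitz curve of controlled length: this is exactly where Ahlfors $Q$-regularity and the $p$-Poincar\'e inequality enter. A Poincar\'e inequality gives quantitatively many rectifiable curves (via pencils of curves / modulus estimates), and one upgrades "many rectifiable curves" to "one bi-Lipschitz curve" by a stopping-time / Christ-cube argument on the curve family; the "quantitative approximation of continua by bi-Lipschitz curves" advertised in the abstract is presumably the tool here, so I would invoke that.

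The second step is the separation. Here the hypothesis $p < Q-1$ is decisive. The complement of the image $f(A)$ of a bi-Lipschitz copy of a subset of $\R$ has small dimension — being $1$-dimensional — relative to the ambient dimension $Q$, and the gap $p < Q-1$ is precisely the room needed so that a "generic" curve of controlled length joining $p$ to $q$ can be perturbed to avoid $f(A)$ (minus endpoints) while keeping its length and bi-Lipschitz constant under control. Concretely, I would use the modulus estimate to say that the family of bad curves — those coming within $\epsilon|a-b|$ of $f(A)$ away from the endpoints — has small $p$-modulus (because $f(A)$ is Ahlfors $1$-regular and $1 < Q - p$), hence there is a good curve in the complement; then one uses the uniform-domain conclusion from the excerpt (the statement that the complement of an open set of small Assouad dimension is a uniform domain, applied to $X \setminus f(A)$ or to a suitable sub-region) to connect the good curve's endpoints back to $p$ and $q$ through a uniform domain, producing a cigar-type curve. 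Along a uniform domain one can run the same Poincar\'e-plus-stopping-time machinery to get a bi-Lipschitz connecting arc that lies in the domain, i.e. avoids $f(A)$ except at its two endpoints $p,q$.

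The third step is the concatenation and the verification that $F: I \to X$ is globally bi-Lipschitz. Within a single gap $(a,b)$ this is immediate from the construction (length $\asymp |a-b|$, bi-Lipschitz with uniform constant). For two points $s, t$ lying in different gaps or one in $A$ and one in a gap, the upper Lipschitz bound is a triangle-inequality bookkeeping exercise using that each $F|_{[a,b]}$ has length $\lesssim |a-b|$ and $f$ is Lipschitz on $A$. The lower bound $d(F(s),F(t)) \gtrsim |s-t|$ is the delicate point: if $s,t$ are far apart but $F(s), F(t)$ are accidentally close, one of them must lie on a connecting arc; here one uses the separation property — the arc over gap $(a,b)$ meets $f(A)$ and the other arcs only near $f(a), f(b)$, and near those endpoints $F$ is bi-Lipschitz by the local construction — to derive a contradiction or a direct estimate. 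I expect the main obstacle to be exactly this global lower bound: making the separation \emph{quantitative and scale-invariant} across infinitely many gaps of wildly different sizes, so that a single constant works, and handling the interaction of a small gap sitting close to a large gap. This is where the careful modulus bookkeeping (uniform in scale, which Ahlfors regularity provides) and the uniform-domain estimates must be assembled into one clean inequality, and it is the step most likely to require the full strength of $p < Q-1$ rather than merely $p < Q$.
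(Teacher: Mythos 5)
Your high-level ingredients match the paper's: lower modulus bounds from the Poincar\'e inequality, the observation that $p<Q-1$ is what lets curves avoid a $1$-regular set, the uniformity of $X\setminus f(A)$, and the bi-Lipschitz straightening of curves. But the architecture of your plan --- one bi-Lipschitz arc per gap $(a,b)$ joining $f(a)$ to $f(b)$, ``staying far from $f(A)$ on the scale of $|a-b|$ except near its endpoints'' --- is not how the paper proceeds, and as written it has two genuine gaps that your paragraph on the lower bound flags but does not close.

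The first gap is that ``far on the scale of $|a-b|$'' is the wrong scale. If $s$ lies in the gap at distance $\delta\ll |a-b|$ from $A$ and $t\in A$ has $|s-t|\asymp\delta$, then the lower bi-Lipschitz bound forces $d(F(s),f(t))\asymp\delta$, i.e.\ $F(s)$ must sit at distance roughly $\delta$ from $f(A)$, not roughly $|a-b|$. This must hold at every scale of approach to both endpoints of the gap, which means you need a multi-scale decomposition \emph{inside each gap}. The paper does a Whitney decomposition of $I\setminus A$ and first constructs ``reference points'' $\pi(x)\in X$ for each Whitney endpoint $x$ (Proposition~\ref{Prop:Endpoint}), chosen via porosity of $f(A)$ to be at distance $\asymp\dist(x,A)$ both from $f(a_x)$ and from $f(A)$; the extension is then forced to pass near each $\pi(x)$. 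No such mechanism appears in your plan, and ``near those endpoints $F$ is bi-Lipschitz by the local construction'' presupposes exactly the construction that is missing.

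The second gap is that your arcs only avoid $f(A)$; they are not made to avoid one another. Two arcs over distinct gaps --- or, once you do the Whitney decomposition, arcs over two Whitney intervals of comparable scale sitting near the same cluster point of $A$ --- can come arbitrarily close away from $f(A)$, again breaking the lower bound. The paper handles this with the David--Semmes filtration (Lemma~\ref{Lem:DS}): the Whitney intervals are split into finitely many classes so that two intervals in the same class are either far apart or very different in scale, and the middle-third arcs are built inductively over the classes, each new arc constructed to avoid $f_k(A_k)$, the \emph{entire image built so far}, not merely $f(A)$. The constant $\delta_{k+1}$ in the modulus estimate degrades with $k$ precisely because the set to be avoided grows, and the whole point of the filtration is that only finitely many such degradations occur. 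This inductive avoidance is the concrete content behind what you call ``careful modulus bookkeeping,'' and without it the concatenation step does not go through.

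In short: you have correctly identified the tools and the pressure point, but the proposal is missing the two structural devices --- the Whitney-scale reference points $\pi$ and the finite filtration with inductive avoidance of the accumulating image --- that turn those tools into a proof of the global lower bi-Lipschitz bound.
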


In Theorem~\ref{thm:main2} we prove a stronger quantitative version of this result in the sense that
the bi-Lipschitz constant of $F$ depends only the bi-Lipschitz constant of $f$ and on the data of Ahlfors $Q$-regularity and the Poincar\'e inequality.
Moreover, if $X$ is unbounded, then we can choose $I=\R$.

A large variety of metric spaces satisfy the assumptions of Theorem~\ref{thm:1}, including 
%Riemannian manifolds with Ricci curvature bounded below \cite{BCH18}, 
orientable, $n$-regular, linearly locally contractible $n$-manifolds with $n\geq 3$ \cite{Semmes96}, Carnot groups \cite{Varopoulos, Jerison86} (which include Euclidean spaces and the Heisenberg group), certain hyperbolic buildings \cite{BP99}, Laakso spaces \cite{Laakso00}, and certain Menger sponges \cite{MTW,EBG}. 

The assumptions of the theorem are sharp in that neither Ahlfors regularity nor the Poincar\'e inequality can be removed from the statement. For Ahlfors regularity, let $X=\S^2\times \R$ with the length metric and the induced Hausdorff 3-measure. Then $X$ is complete, has Ricci curvature bounded from below so it satisfies the $1$-Poincar\'e inequality \cite[Chapter VI.5]{Chavel}, but is not Ahlfors regular. Define $f:\{2^{n}:n\in\N\} \to X$ by $f(2^n) = (p_0,(-2)^n)$ where $p_0 \in \S^2$. The map $f$ is bi-Lipschitz and if $F:\R \to X$ is any homeomorphic extension of $f$, then for any $n\in\N$, $F([2^n,2^{n+1}])$ intersects with $(\S^2\times\{0\})$ 
%and
%\[ \dist(F([2^n,2^{n+1}]),F([2^{n+1},2^{n+3}])) \leq 2,\]
so $F$ can not be bi-Lipschitz.

Since the Poincar\'e inequality is an open ended condition \cite{KZ08}, we may assume that $p<Q-1$ for the proof of the theorem. However, the bound $Q-1$ is sharp. To see this, let $n\geq 2$, let $P_1,P_2$ be two $n$-dimensional planes in $\R^{2n-1}$ intersecting on a line $\ell$, and let $p_0\in \ell$. The metric space $X = (P_1\cup P_2)\setminus B(p_0,1)$ with the induced Euclidean metric and $n$-dimensional Lebesgue measure is complete, $n$-regular, and satisfies the $p$-Poincar\'e inequality for all $p>n-1$ \cite[Theorem 6.15]{HK}. Let $f: (-\infty,-1]\cup \{-\frac12,\frac12\} \cup [1,\infty) \to X$ be a map such that $f(-\tfrac12) \in P_1\setminus (\ell \cup B(p_0,1))$, $f(\tfrac12) \in P_2\setminus (\ell\cup B(p_0,1))$, and $f$ maps $\R\setminus (-1,1)$ isometrically onto $\ell\setminus B(p_0,1)$. Then $f$ is bi-Lipschitz but admits no homeomorphic (let alone bi-Lipschitz) extension $F:\R\to X$.

%The assumptions of the theorem are sharp in that neither Ahlfors regularity nor the Poincar\'e inequality can be removed from the statement. On the other hand, since the Poincar\'e inequality is an open ended condition \cite{KZ08}, one can assume that $p=Q-1$ in the theorem, provided that $Q>2$. Moreover, despite MacManus' extension theorem in $\R^2$, Theorem \ref{thm:1} fails in general in the critical case $Q=2$ and $p=1$. See Section \ref{sec:examples} for further discussion and examples. 

\subsection{Related results}

The first corollary of Theorem \ref{thm:1} gives a sufficient condition for bi-Lipschitz rectifiability in Ahlfors regular spaces satisfying a Poincar\'e inequality.

\begin{cor}
Let $X$ be a complete $Q$-regular metric measure space supporting a $p$-Poincar\'e inequality for some $1< p \leq Q - 1$. If $E \subset X$ has Assouad dimension less than 1, then $E$ is bi-Lipschitz rectifiable.
\end{cor}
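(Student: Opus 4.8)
The plan is to reduce the statement to Theorem~\ref{thm:1} by showing that a set $E$ of Assouad dimension strictly less than $1$ embeds bi-Lipschitzly into $\R$; then applying Theorem~\ref{thm:1} to that embedding produces a bi-Lipschitz arc containing (a bi-Lipschitz copy of) $E$, which is exactly what bi-Lipschitz rectifiability asks for. So the heart of the matter is the following classical-type fact: \emph{if $(E,d)$ is a metric space with $\dim_A E < 1$, then there is a bi-Lipschitz embedding $\iota \colon E \to \R$}. This is a known consequence of the theory of Assouad dimension (it is the observation underlying \cite[Corollary 3.5]{Badger:2019} and is also implicit in Assouad's embedding theorem), and I would cite it, but let me sketch the argument in case a self-contained proof is wanted.

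First I would recall that $\dim_A E = s < 1$ means there is a constant $C$ so that every ball $B(x,R)$ in $E$ can be covered by at most $C(R/r)^{s}$ balls of radius $r$, for all $0 < r \le R$. Fix $s < t < 1$. Working at dyadic scales, for each $k \in \Z$ choose a maximal $2^{-k}$-separated set $N_k \subset E$; then $N_k$ is a $2^{-k}$-net, and the covering bound gives that within any ball of radius $2^{-j}$ (for $j \le k$) there are at most $C 2^{t(k-j)}$ points of $N_k$. The idea is to build $\iota$ one scale at a time: having defined $\iota$ on $N_j$ so that it is bi-Lipschitz at scales $\ge 2^{-j}$, extend it to $N_{j+1}$ by placing the new points near the images of their nearest neighbors in $N_j$, using the fact that only $O(2^{t})$ new points cluster near each old point and $2^t$ worth of room is available because $t < 1$. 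The geometric summability of the perturbations (the scales form a geometric series and the "crowding" grows only like $2^{tk}$ while the available length at scale $2^{-k}$ in $\R$ grows like... — more precisely one tracks that the total displacement is controlled by $\sum 2^{-k} \cdot 2^{t \cdot 0} < \infty$ type estimates) guarantees that the limiting map $\iota$ is well-defined, injective, and bi-Lipschitz onto its image.

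The main obstacle is precisely this inductive "packing into the line" step: one must simultaneously preserve the lower bound $d(x,y) \lesssim |\iota(x) - \iota(y)|$ (points that are $d$-far must stay $\R$-far) and the upper bound (points that are $d$-close must stay $\R$-close), and the danger is that while spreading out a cluster of $\sim 2^{tk}$ nearby points of $N_k$ along an interval of length $\sim 2^{-k+1}$ one is forced to make some of them closer together than $\sim 2^{-k}$, or to push some of them so far that they collide with the image of a different cluster. The condition $t < 1$ is exactly what makes the bookkeeping close: a cluster of $N 2^{tk}$ points spread evenly over an interval of length comparable to $2^{-k}$ has consecutive gaps of size $\sim 2^{-k}/2^{tk} = 2^{-(1+t)k/\,}\!$... — i.e. gaps that are comparably $2^{-k}$ up to the summable factor $2^{-tk} \cdot$(something), and summing over scales keeps the cumulative distortion bounded. (In the write-up one would likely instead quote the embedding result directly.)

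Once $\iota \colon E \to \R$ is in hand, set $A = \iota(E) \subset \R$ and $f = \iota^{-1} \colon A \to E \subset X$, which is an $L$-bi-Lipschitz embedding for some $L$. Since $X$ is complete, $Q$-regular, and supports a $p$-Poincar\'e inequality with $1 \le p < Q-1$, Theorem~\ref{thm:1} applies and yields a bi-Lipschitz embedding $F \colon I \to X$ extending $f$, where $I$ is the smallest closed interval containing $A$. Then $F(I)$ is a bi-Lipschitz arc in $X$ containing $f(A) = E$, so $E$ is bi-Lipschitz rectifiable, completing the proof. $\qed$
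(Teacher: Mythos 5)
Your top-level plan coincides with the paper's: both reduce the corollary to the fact that a metric space of Assouad dimension strictly less than $1$ admits a bi-Lipschitz embedding into $\R$, then apply Theorem~\ref{thm:1} to the inverse of that embedding to produce a bi-Lipschitz arc containing $E$. Where you differ is in how the embedding into $\R$ is justified, and there your sketch has a genuine gap. The paper does not attempt a direct ``pack the nets into the line'' argument; it instead invokes the chain: $\dim_A E < 1$ implies $E$ is uniformly disconnected (\cite[Lemma 15.2]{DS97}), hence $E$ is bi-Lipschitz equivalent to an ultrametric space of Assouad dimension less than $1$ (\cite[Proposition 15.7]{DS97}), and such an ultrametric space bi-Lipschitz embeds into $\R$ (\cite[Theorem 3.8]{Luukkainen:1994}).

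The uniform disconnectedness is precisely the ingredient your inductive construction is missing. You yourself describe the failure mode --- ``to push some of them so far that they collide with the image of a different cluster'' --- but the covering bound $C(R/r)^s$ alone does not rule this out, because it gives no control over \emph{which} side of an already-placed cluster a new net point should go, nor does it prevent two clusters that are far apart in $E$ from being forced adjacent in $\R$ at a later scale. What makes the bookkeeping close is that uniform disconnectedness (equivalently, the ultrametric tree structure) organizes the nets $N_k$ into a nested family of well-separated clumps, so that each clump at scale $2^{-k}$ can be assigned its own interval and the assignment can be refined consistently scale by scale. The implication $\dim_A < 1 \Rightarrow$ uniformly disconnected is a real theorem, not a formality, and skipping it leaves the induction unjustified. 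Since you note that in a final write-up you would ``quote the embedding result directly,'' doing so (with the references above) would repair the proof and bring it in line with the paper's; as written, though, the sketch does not stand on its own.
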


The proof of the corollary follows the same ideas as in the Euclidean case. Since the Assouad dimension of $E$ is less than 1, \cite[Lemma 15.2]{DS97} implies that $E$ must be uniformly disconnected, and hence it is bi-Lipschitz equivalent to an ultrametric space $Z$ of Assouad dimension less than 1 \cite[Proposition 15.7]{DS97}.
By \cite[Theorem 3.8]{Luukkainen:1994}, there exists a bi-Lipschitz embedding $g:E \to \R$, and, by Theorem~\ref{thm:1}, there exists a closed interval $I$ and a bi-Lipschitz extension $f:I \to X$ of the map $g^{-1} : g(E) \to X$.
Thus $E \subset f(I)$, so $E$ is contained in a bi-Lipschitz arc.

%\subsection{Ingredients in the proof of Theorem~\ref{thm:1}}\label{sec:outline}

The proof of Theorem~\ref{thm:1} has two main ingredients. The first is the construction of short curves in $X \setminus f(A)$ that stay quantitatively far from $f(A)$.
To build such curves, we will use the notion of the uniformity of a set.
%The definition of $f$ on each interval  of $\mathscr{I}$ requires that $X$ minus a bi-Lipschitz image of a subset of $\R$ contains a lot of rectifiable curves. 
Given a set $U \subset X$, we say that $U$ is $c$-\emph{uniform} if, for every $x,y \in U$, there exists a path $\gamma: [0,1] \to U$ joining $x$ to $y$ such that
\begin{enumerate}
\item the length of $\gamma$ is at most $c d(x,y)$, and
\item $\dist(\gamma(t), X \setminus U) \geq c^{-1}\dist(\gamma(t), \{x,y\})$ for all $t\in [0,1]$.
\end{enumerate} 
In other words, $U$ is uniform if, for any $x,y \in U$, there exists a curve connecting them which is short compared to $d(x,y)$ and stays far from $X\setminus U$ quantitatively. If $U$ satisfies only property (1) in this definition, then we say that $U$ is $c$-\emph{quasiconvex}.
%Note that a $c$-uniform set is also $c$-quasiconvex. 

It is an open problem to classify the closed sets $Y\subset X$ for which $X\setminus Y$ is quasiconvex or uniform.
Hakobyan and Herron \cite{HH08} showed that, if $Y\subset \R^n$ has Hausdorff $(n-1)$-measure $\mathcal{H}^{n-1}(Y) =0$, then $\R^n\setminus Y$ is quasiconvex.
Moreover, this assumption is sharp. 
Herron, Lukyanenko, and Tyson \cite{HLT} proved the same result in the Heisenberg group $\mathbb{H}$ where, in this setting, it is assumed that $\mathcal{H}^{3}(Y) =0$.
The dimension $3$ is natural as $\mathbb{H}$ is 4-regular while $\R^n$ is $n$-regular.
It is unknown if a similar result exists in all Carnot groups.

The question of whether $X\setminus Y$ is uniform has been studied in terms of uniform disconnectedness of $Y$ \cite{MacManus:1999} and quasihyperbolicity of $X$ and $Y$ \cite{Herron1, HVW17, Herron2}. V\"ais\"al\"a \cite{Vais88} showed that, 
if $\R^n\setminus Y$ is uniform, then the topological dimension of $Y$ is at most $n-2$. The following proposition, which we prove in Section \ref{sec:uniform}, works in the opposite direction: if $X$ is Ahlfors regular and supports a Poincar\'e inequality and if the Assouad dimension of $Y$ is small, then $X\setminus Y$ is uniform. 

\begin{prop}\label{Prop:Unif}
Let $(X,d,\mu)$ be a complete $Q$-Ahlfors regular metric measure space supporting a $p$-Poincar\'e inequality for some $1< p \leq Q$. If $Y \subset X$ is a closed set with Assouad dimension less than $Q - p$, then $X \setminus Y$ is a uniform domain.
\end{prop}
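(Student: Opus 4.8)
The plan is to construct, for any two points $x, y \in \Omega := X \setminus Y$, a cigar-type curve joining them inside $\Omega$, and the construction rests on two estimates. The first is an \emph{abundance of short curves}: since $X$ is $Q$-regular and supports a $p$-Poincar\'e inequality, there are constants $c_0, C_0 \geq 1$ depending only on the data so that for every ball $B(z,R)$ and all continua $E, F \subset B(z,R)$ with $\min\{\diam E, \diam F\} \geq R/10$ and $\dist(E,F) \leq R$, the family $\Gamma(E,F)$ of curves joining $E$ to $F$ inside $B(z, C_0 R)$ with length at most $C_0 R$ satisfies $\Mod_p(\Gamma(E,F)) \geq c_0 R^{Q-p}$; this is a standard (Loewner-type) consequence of the Poincar\'e inequality together with Ahlfors regularity, and it encodes the ``quantitatively many rectifiable curves'' in such spaces. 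The second is a \emph{thinness} estimate for $Y$: fix $s$ with $\dim_A Y < s < Q - p$. By $Q$-regularity alone, any curve that starts in $\overline{B}(w,t)$ and exits $B(w,2t)$ contributes $\lesssim t^{Q-p}$ to $p$-modulus; covering $\overline{N}_{\eta R}(Y) \cap B(z, 2R)$ by $\lesssim \eta^{-s}$ balls of radius $\eta R$ and using subadditivity of modulus, one gets that the family of curves contained in $B(z,R)$ whose diameter exceeds $4\eta R$ and which come within distance $\eta R$ of $Y$ has $p$-modulus $\lesssim \eta^{-s}(\eta R)^{Q-p} = \eta^{Q-p-s} R^{Q-p}$. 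Since $Q - p - s > 0$, this is small relative to $R^{Q-p}$ once $\eta$ is small, uniformly in $z$ and $R$. Finally, recall that $\dim_A Y < Q$ forces $Y$ to be \emph{porous}, and that $X$ itself is quasiconvex.

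Combining the two estimates yields a one-scale connecting lemma: there is $\e \in (0,1)$, depending only on the data, such that if $d(x,y) = r$ then some curve $\gamma$ of length at most $C_0 r$ joins $\overline{B}(x, r/10)$ to $\overline{B}(y, r/10)$ with $\dist(\gamma(t), Y) \geq \e r$ for all $t$. Indeed, the abundance estimate gives $\Mod_p(\Gamma(\overline{B}(x,r/10), \overline{B}(y,r/10))) \geq c_0' r^{Q-p}$, while every curve in this family has diameter $\geq 4r/5$, so the thinness estimate (with $R \asymp r$, $\eta = \e$) bounds the $p$-modulus of those members coming within $\e r$ of $Y$ by $c_1 \e^{Q-p-s} r^{Q-p}$; choosing $\e$ with $c_1 \e^{Q-p-s} < c_0'/2$ leaves a subfamily of positive modulus, and any curve in it works (its endpoints automatically sit at distance $\geq \e r$ from $Y$). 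Because such a $\gamma$ has length $\lesssim r$ with both endpoints within $r/10$ of $\{x,y\}$, it already satisfies the cigar condition $\dist(\gamma(t), Y) \geq \e r \gtrsim \dist(\gamma(t), \{x,y\})$ at the scale of $r$.

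It remains to reach the endpoints, and this multi-scale chaining is the main obstacle. Fix $x$, put $\delta_x = \dist(x, Y) > 0$, and let $q_0$ be the endpoint of $\gamma$ near $x$. If $\delta_x \geq r/20$, join $x$ to $q_0$ by a quasigeodesic inside $B(x, r/10)$ and the piece near $x$ is finished. Otherwise pick dyadic scales $r/20 = t_0 > t_1 > \cdots > t_m \asymp \delta_x$; using porosity, choose hubs $q_k$ with $d(q_k, x) \asymp t_k$ and $\dist(q_k, Y) \gtrsim t_k$, apply the one-scale lemma at scale $t_k$ to join $q_k$ to $q_{k+1}$ by a curve of length $\lesssim t_k$ staying at distance $\gtrsim t_k$ from $Y$, and finally join $q_m$ to $x$ by a quasigeodesic inside the $Y$-free ball $B(x, \delta_x/2)$. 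Concatenating and summing the geometrically decaying lengths yields a curve from $x$ to $q_0$ of length $\lesssim r$ on which, piece by piece, $\dist(\cdot, Y) \gtrsim \dist(\cdot, x)$ with a single constant. Performing the symmetric construction near $y$ and splicing the three arcs onto $\gamma$ produces a curve realizing the uniformity of $\Omega$ with constant depending only on the data; openness and connectedness of $\Omega = X \setminus Y$ come out as byproducts. The delicate points are entirely in the bookkeeping: arranging the hubs with controlled pairwise distances (using only $\dim_A Y < Q$), verifying that the cigar inequality survives concatenation with a constant independent of the number of scales, and extracting the length-restricted abundance estimate from the Poincar\'e inequality — this is where essentially all the work lies.
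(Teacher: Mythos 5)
Your proposal is correct and follows essentially the same route as the paper: a Poincar\'e--Loewner lower bound on $p$-modulus, upper bounds for long curves and for curves entering a thin neighborhood of $Y$ (the paper's Lemmas 3.1--3.3), a one-scale connecting corollary combining them (Corollary 3.4), and a multi-scale chain through porous hubs placed in dyadic annuli around $x$ and $y$, with the uniformity constant extracted from the geometric decay of lengths. The only differences are cosmetic bookkeeping: the paper runs the dyadic chain all the way in to $x$ and $y$ rather than stopping at scale $\dist(x,Y)$ and switching to a quasigeodesic, and it bounds the thinness modulus with a single admissible function rather than countable subadditivity.
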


Note that if $Y \subset X$ and has Assouad dimension less than $Q-p$, then $\mathcal{H}^{Q-p}(Y)=0$. The assumption on the Assouad dimension is sharp. For example, let $X=\R^n$, let $P$ be an $(n - 1)$-dimensional hyperplane in $\R^n$, and let $Y$ be a maximal $1$-separated subset of $P$. Then it is easy to see that $\dim_A(Y) = n - 1$, $\mathcal{H}^{n-1}(Y)=0$, and $\R^n \setminus Y$ is not uniform.

%%%%%%%%%%%%%%%%%%%%%%%%%%%%%
\begin{comment}
The following is a strengthening of the previous proposition which is an application of our main result (Theorem~\ref{thm:1}). It tells us that, inside a uniform domain, the curve satisfying the uniformity conditions (1) and (2) above may also be chosen to be bi-Lipschitz.
We would like to thank the referee for bringing this application to our attention. The proof of this result is included as an appendix as some technical details are omitted.

\begin{cor} 
\label{c-unifbilip}
Let $(X,d,\mu)$ be a complete $Q$-Ahlfors regular metric measure space supporting a $p$-Poincar\'e inequality for some $Q>2$ and $1< p \leq Q-1$.
If $U \subset X$ is a uniform domain,
then there exists a constant $L>0$ such that,
for any $x,y \in U$, there is an $L$-bi-Lipschitz curve $\sigma:[0,d(x,y)] \to U$ such that $\gamma(0)=x$, $\gamma(d(x,y)) =y$ and
\begin{equation} 
\label{e-uniformitycor}
\dist(\gamma(t),\partial U) \geq L^{-1}\min\{d(\gamma(t),x),d(\gamma(t),y)\}, \quad\text{for all $t$}.
\end{equation}
\end{cor}
\end{comment}
%%%%%%%%%%%%%%%%%%%%%%%%%%%%%

The second ingredient in the proof of Theorem \ref{thm:1} is a standard ``straightening" argument for paths. In particular, Lytchak and Wenger \cite[Lemma 4.2]{LW20} proved that, given any topological arc in a geodesic space, there exists a bi-Lipschitz arc with the same endpoints that is close to the original one; see also \cite[Lemma 4.2]{MW21} for a similar result for topological circles. In Section \ref{sec:approx}, we prove a quantitative version of their result.
Moreover, under the additional assumptions of $Q$-regularity and a Poincar\'e inequality, we show as a corollary of Theorem \ref{thm:1} that every continuum (i.e., every compact connected set) can be approximated by a bi-Lipschitz curve in the Hausdorff distance.

\begin{prop}\label{cor:Modi}
Let $(X,d,\mu)$ be a complete $Q$-regular metric measure space supporting a $p$-Poincar\'e inequality for some $1< p \leq Q - 1$, let $K \subset X$ be a continuum, and let $\e \in (0,1)$.
For any $x,y\in K$ with $d(x, y) \geq \e \diam{K}$, there exists a curve $\gamma:[0,1] \to X$ with $\gamma(0)=x$
and $\gamma(1)=y$,
and there exists a constant $L\geq 1$ depending only on $\e$, the constants of $Q$-regularity, and the data of the Poincar\'e inequality such that
\[ \frac1{L}|s-t|\diam{K} \leq d(\gamma(t),\gamma(s)) \leq L|s-t|\diam{K}\]
for all $s,t \in [0,1]$,
and the Hausdorff distance $\dist_H(K,\gamma([0,1])) \leq \e \diam{K}$. 
\end{prop}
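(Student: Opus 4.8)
The plan is to replace $K$ by a path from $x$ to $y$ that is Hausdorff-close to $K$ and has length controlled by $\e$ and the data, and then to straighten that path into a bi-Lipschitz curve using Theorem~\ref{thm:1} together with the quantitative version of the Lytchak--Wenger straightening lemma established in Section~\ref{sec:approx}. Two reductions are convenient. After rescaling the metric I may assume $\diam K=1$. Also, since $X$ is complete, $Q$-regular (hence doubling and proper) and supports a $p$-Poincar\'e inequality, it is $C_0$-quasiconvex with $C_0$ depending only on the data; replacing $d$ by the induced length metric $d_\ell$ (which satisfies $d\le d_\ell\le C_0 d$, is again $Q$-regular and supports a $p$-Poincar\'e inequality up to constants, and for which $X$ is geodesic by the Hopf--Rinow theorem) changes every bi-Lipschitz and Hausdorff-distance estimate by controlled factors, so I may also assume $X$ is geodesic.

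For the approximating path, fix $\delta=c\e$ with $c$ a small constant to be chosen and let $\{z_1,\dots,z_m\}$ be a maximal $\delta$-separated subset of $K$; by $Q$-regularity, $m\le C\delta^{-Q}$. Join $z_i$ to $z_j$ whenever $d(z_i,z_j)\le 3\delta$. Since $K$ is a continuum and $\{z_i\}$ is $\delta$-dense in $K$, the resulting graph is connected (otherwise the $\delta$-neighbourhoods of the two sides would be disjoint open sets covering $K$), so it has a spanning tree $T$. Choosing $z_a,z_b$ with $d(z_a,x)\le\delta$ and $d(z_b,y)\le\delta$, a depth-first traversal of $T$ from $z_a$ to $z_b$ gives a walk $z_a=w_0,w_1,\dots,w_N=z_b$ that visits every $z_i$, has $N\le 2m$, and satisfies $d(w_j,w_{j+1})\le 3\delta$ for all $j$. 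Joining consecutive $w_j$ by geodesics and attaching geodesics from $x$ to $z_a$ and from $z_b$ to $y$ produces a path $\sigma$ from $x$ to $y$ with $\len(\sigma)\le C\delta^{1-Q}$; since $\{w_j\}\supseteq\{z_i\}$ is $\delta$-dense in $K$ while $\sigma$ stays within $3C_0\delta$ of $K$, we also get $\dist_H(K,\sigma([0,1]))\le C_1\delta$ for a data-dependent $C_1$. Thus $\len(\sigma)$, $\diam(\sigma([0,1]))$ and $\dist_H(K,\sigma([0,1]))$ are all controlled in terms of $\e$ and the data, while $d(x,y)\ge\e\diam K=\e$.

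It remains to straighten $\sigma$, and this is where the hypothesis $p<Q-1$ enters: $\sigma$ is far from injective, since the depth-first walk retraces every branch of $K$, so to obtain a genuine bi-Lipschitz embedding one must separate these retraced pieces, and the codimension bound $Q-p>1$ is exactly what provides the room — just as in Theorem~\ref{thm:1}. Concretely I would first move $\sigma$ slightly, using the extension machinery behind Theorem~\ref{thm:1} (equivalently, Theorem~\ref{thm:main2}), to a nearby embedded arc from $x$ to $y$ that still lies within $2C_1\delta$ of $K$, and then apply the quantitative straightening lemma of Section~\ref{sec:approx} to this arc. The output is a curve $\gamma:[0,1]\to X$ with $\gamma(0)=x$, $\gamma(1)=y$ and $\dist_H(\sigma([0,1]),\gamma([0,1]))\le C_1\delta$, satisfying $\tfrac1L|s-t|\le d(\gamma(s),\gamma(t))\le L|s-t|$ for all $s,t\in[0,1]$ with $L$ depending only on $\e$ and the data; then $\dist_H(K,\gamma([0,1]))\le 3C_1\delta\le\e$ once $c$ is small enough, and undoing the two reductions gives the statement with $\diam K$ reinstated.

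The main obstacle is exactly this last step. Producing a dense path of controlled length is routine, but turning it into a bi-Lipschitz curve that still passes within $\e\diam K$ of every point of $K$ while keeping the bi-Lipschitz constant dependent only on $\e$ and the data — rather than on the combinatorial complexity of the walk, which is not controlled as $\e\to0$ — is the heart of the matter, and it is here that $p<Q-1$ and the extension theorem are indispensable. One must also be careful that the quantitative Lytchak--Wenger lemma is applied to (or first made applicable to) a possibly self-intersecting path, and that its output stays Hausdorff-close to all of $\sigma([0,1])$ rather than merely to a sub-arc.
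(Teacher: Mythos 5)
Your combinatorial skeleton is the same as the paper's: take a $\simeq\e$-separated net of $K$, note it has cardinality $\lesssim\e^{-Q}$, build the incidence graph, and use a tour (a DFS of a spanning tree in your version, a two-to-one Euler tour of the graph in Lemma~\ref{lem:2-to-1}) that visits every vertex and repeats each edge a bounded number of times. Your reduction to the geodesic case is also fine. But the final ``straightening'' step is where the argument has a genuine gap.

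You propose to trace the walk with geodesics to get a continuous, self-intersecting path $\sigma$, then ``move $\sigma$ slightly, using the extension machinery behind Theorem~\ref{thm:1}, to a nearby embedded arc,'' and finally apply Lemma~\ref{Prop:Modi}. Two concrete problems. First, Theorem~\ref{thm:main2} takes as input a \emph{bi-Lipschitz map defined on a subset of $\R$}, not a continuous curve; there is no stated device for turning a self-intersecting path into an embedded arc, and saying ``the extension machinery'' does not produce one. Second, even if you did have such an arc, Lemma~\ref{Prop:Modi} only guarantees that its output $\gamma$ satisfies $\dist(\gamma(t),\sigma([0,1]))\le\e\diam\sigma([0,1])$ for each $t$ — a one-sided containment. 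Nothing forces $\gamma$ to pass near every point of $\sigma([0,1])$, let alone of $K$, so the two-sided Hausdorff bound $\dist_H(K,\gamma([0,1]))\le\e\diam K$ can fail. You flag both concerns in your last paragraph, but they are not minor technicalities to be ``careful'' about; they are precisely the unresolved content of the proof.

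What the paper does instead is to bypass the continuous path altogether. After the tour $v_0,\dots,v_n$ is fixed, the repeated visits are \emph{unfolded}: for each vertex $z$ visited $m_z\le C''$ times, one chooses $m_z$ points $v_{z,1},\dots,v_{z,m_z}\in B(z,\e/16)$ that are pairwise $(c^{-1}\e)$-separated (possible by $Q$-regularity, with $c$ depending only on the regularity constants), and replaces the $j$-th visit to $z$ by $v_{z,j}$. The resulting finite sequence $\tilde v_0,\dots,\tilde v_n$ is then a genuinely $(c^{-1}\e)$-separated set with consecutive gaps $\lesssim\e$, so the map $f(i\e)=\tilde v_i$ on the finite set $\{0,\e,\dots,n\e\}\subset\R$ is $(c\e^{-1})$-bi-Lipschitz. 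One applies Theorem~\ref{thm:main2} directly to this $f$. The bi-Lipschitz extension $F$ is then forced to pass through every $\tilde v_i$, each of which is within $\e/16$ of the $\e/4$-net, and the bi-Lipschitz bound keeps $F$ from straying more than $\lesssim_\lambda\e$ between consecutive sample points; this gives the two-sided Hausdorff estimate for free. That unfolding step — turning a walk with repeats into a bi-Lipschitz map on a finite subset of $\R$ by perturbing the repeated vertices — is the key idea your proposal is missing, and it is also where the room provided by $\dim_A f(A)<1<Q-p$ is actually used inside Theorem~\ref{thm:main2} rather than in a separate straightening pass.
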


In particular, every compact Ahlfors regular metric measure space supporting a Poincar\'e inequality contains ``almost space-filling'' bi-Lipschitz curves.

%\begin{prop}\label{cor:Modi}
%Let $(X,d,\mu)$ be a complete $Q$-regular metric measure space supporting a $p$-Poincar\'e inequality for some $1\leq p <Q - 1$, let $\e \in (0,1)$ and let $\sigma:[0,1] \to X$ is a curve with $d(\sigma(0), \sigma(1)) \geq \e \diam{\sigma([0,1])}$. There exists $L\geq 1$ depending only on the constants of $Q$-regularity, the data of the Poincar\'e inequality, and $\e$, and there exists a curve $\gamma:[0,1] \to X$ with $\gamma(0)=\sigma(0)$, $\gamma(1)=\sigma(1)$, such that for all $s,t \in [0,1]$
%\[ \frac1{L}|s-t|\diam{\sigma([0,1])} \leq d(\gamma(t),\gamma(s)) \leq L|s-t|\diam{\sigma([0,1])}\]
%and the Hausdorff distance
%\[ \dist_H(\sigma([0,1]),\gamma([0,1])) \leq \e \diam{\sigma([0,1])}.\] 
%\end{prop}

\subsection{Outline of the proof of Theorem \ref{thm:1}}
We start with two simple reductions. First, since bi-Lipschitz maps extend on the completion of their domain, we may assume that $A$ is a closed set. 
Second, it is well known that the Poincar\'e inequality, completeness, and Ahlfors regularity imply that $X$ is quasiconvex \cite[Theorem 17.1]{Cheeger99}. Every complete quasiconvex space is bi-Lipschitz equivalent to a geodesic metric space and since the properties of Ahlfors $Q$-regularity and the $p$-Poincar\'e inequality are preserved under bi-Lipschitz mappings \cite[Lemma 8.3.18]{HKST}, we may assume for the rest that $X$ is geodesic.

For the proof of Theorem \ref{thm:1}, similar to the proof of Theorem \ref{thm:DS} and the Whitney Extension Theorem, we construct a \emph{Whitney decomposition} $\{\mathcal{Q}_i\}_{i\in\N}$ of $I \setminus A$, i.e., a collection of closed intervals in $I\setminus A$ with mutually disjoint interiors such that their union is $I \setminus A$ and the length of each interval is comparable to its distance from $A$. 

In Section \ref{sec:prelimembeddings} we define two auxiliary embeddings. Specifically, in \textsection\ref{sec:endpoints} we construct a bi-Lipschitz embedding $\pi$ of $E$ into $X$, where $E$ is the set of endpoints of the Whitney intervals $\mathcal{Q}_i$. The final map $F$ will map elements of $E$ very close to their image under $\pi$. In \textsection\ref{sec:middlethird}, we use the results of Sections~\ref{sec:uniform} and \ref{sec:approx} to define a second bi-Lipschitz embedding 
\[ g: A\cup\bigcup_{i\in\N}\hat{\mathcal{Q}}_i \to X\]
of $f$. Here, $\hat{\mathcal{Q}}_i$ denotes the middle third closed interval in $\mathcal{Q}_i$. If we write $\mathcal{Q}_i = [x,y]$, then the image $g(\hat{\mathcal{Q}}_i)$ is a bi-Lipschitz curve that has endpoints very close to $\pi(x)$ and $\pi(y)$.

In Section~\ref{sec:proof} we describe a method to modify and extend the map $g$ near the points $\pi(x)$ to build a curve on the entire interval $I$, and we verify that this curve is indeed bi-Lipschitz to complete the proof of Theorem \ref{thm:1}.

\subsection*{Acknowledgements} We thank the referee for their valuable comments. The second author would like to thank Sylvester Eriksson-Bique for a valuable conversation at an early stage of this project, and Damaris Meier for a conversation on the bi-Lipschitz approximation of curves.

\section{Preliminaries}\label{sec:prelim}

Given quantities $x,y \geq 0$ and constants $a_1,\dots,a_n>0$ we write $x \lesssim_{a_1,\dots,a_n} y$ if there exists a constant $C$ depending at most on $a_1,\dots,a_n$ such that $x \leq C y$. If $C$ is universal, we write $x\lesssim y$. We write $x\simeq_{a_1,\dots,a_n} y$ if $x\lesssim_{a_1,\dots,a_n}y$ and $y\lesssim_{a_1,\dots,a_n} x$. 

Given a metric space $(X,d)$ and two points $x,y \in X$, we say that $\gamma$ is a {\em path joining $x$ with $y$} if there exists some continuous $\gamma:[0,1] \to X$ with $\gamma(0) = x$ and $\gamma(1) = y$.

Given a set $Y \subset X$ and $r>0$, we write
$
B(Y,r) := \{ x \in X : \dist(x,Y) < r\}.
$

\subsection{Porosity and regularity}

For a constant $C>1$, a metric space $X$ is called \emph{$C$-doubling} if every ball of radius $r$ can be covered by at most $C$ balls of radii at most $r/2$. 
Given another constant $\alpha \geq 0$, $X$ is called \emph{$(C,\alpha)$-homogeneous} if every ball of radius $R$  can be covered by at most $C(R/r)^{\alpha}$ balls of radii at most $r$. 
We will occasionally refer to such a metric space as $\alpha$-{\em homogeneous} when the constant $C$ is not important.
Clearly, a $(C,\alpha)$-homogeneous space is $(C2^{\alpha})$-doubling. Conversely, given $C>0$ there exists $C'>0$ and $\alpha>0$ such that a $C$-doubling space is $(C',\alpha)$-homogeneous.

The \emph{Assouad dimension} of a metric space $X$ (denoted $\dim_A(X)$) is the infimum of all $\alpha \ge 0$ such that $X$ is $\alpha$-homogeneous.

A metric measure space $(X,d,\mu)$ is said to be \emph{$Q$-Ahlfors regular} (or $Q$-{\em regular}) for $Q \ge 0$ if there exists $C \ge 1$ such that, for all $x \in X$ and all $r \in (0,\diam{X})$,
\begin{equation}\label{eq:reg} 
C^{-1}r^Q \le \mu(B(x,r)) \le Cr^Q. 
\end{equation}
It is easy to see that if $(X,d,\mu)$ is $Q$-regular, then $X$ is $Q$-homogeneous and $\dim_A(X) = Q$.
If we want to emphasize the constant $C$ in \eqref{eq:reg}, then we say that $(X,d,\mu)$ is $(C,Q)$-{\em regular}.

Given $Y \subset X$ we say that $Y$ is \emph{$p$-porous} for some $p\geq 1$ if, for all $y\in Y$ and all $r \in (0,\diam{X})$, there exists some $x \in B(y,r)$ such that $B(x,r/p) \subset B(y,r) \setminus Y$.
In other words, $Y$ contains relatively large ``holes'' near every point.

\begin{lem}[{\cite[Lemma 3.12]{Bonk:2001}}]\label{lem:por}
Let $(X,d,\cH^Q)$ be $(C,Q)$-regular, where $\cH^Q$ is the $Q$-dimensional Hausdorff measure. A set $Y\subset X$ is $p$-porous for some $p\geq 1$ if and only if $\dim_A(Y) \leq Q-\e$ for some $\e>0$. Here, $\e$ and $p$ depend only on each other, $Q$, and $C$.
\end{lem}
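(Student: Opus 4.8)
The plan is to prove the two implications separately; both rely on Ahlfors regularity through the interplay between the $Q$-measure of a set and of its $r$-neighborhoods.

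\textbf{$p$-porosity $\Rightarrow$ $\dim_A(Y)\le Q-\e$.} I would fix a system of dyadic cubes $\{\mathcal{D}_k\}_{k\in\Z}$ on $X$: for each $k$, $\mathcal{D}_k$ partitions $X$ up to a $\mu$-null set into Borel ``cubes'' of diameter $\simeq 2^{-k}$, each containing a ball of radius $\simeq 2^{-k}$, with the usual nesting and bounded branching, all constants depending only on $C,Q$; such systems exist in every doubling metric measure space, and by $Q$-regularity every generation-$k$ cube has $\mu$-measure $\simeq 2^{-kQ}$. Fix $y_*\in Y$ and $R\in(0,\diam X)$, pick $k_0$ with $2^{-k_0}\simeq R$, and for $j\ge 0$ put
$$U_j:=\mu\Big(\bigcup\{Q\in\mathcal{D}_{k_0+j}:Q\cap Y\cap B(y_*,R)\ne\emptyset\}\Big).$$
The core estimate is $U_{j+\ell}\le (1-c_0p^{-Q})\,U_j$ for a constant $c_0=c_0(C,Q)\in(0,1)$, once $\ell=\ell(p,C,Q)$ is chosen with $2^\ell$ a large enough multiple of $p$. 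To prove it, observe that any $Q'\in\mathcal{D}_{k_0+j}$ meeting $Y$ contains a ball $H\subset Q'\setminus Y$ of radius $\gtrsim 2^{-(k_0+j)}/p$ — either a central ball of $Q'$ already avoids $Y$, or one applies $p$-porosity of $Y$ at a point of $Y$ inside such a central ball. Since $\ell$ is large, the generation-$(k_0+j+\ell)$ cubes lying inside the concentric half of $H$ have total measure $\gtrsim p^{-Q}\mu(Q')$ and are disjoint from $Y$; as these subcubes partition $Q'$, summing over the (finitely many, per $B(y_*,R)$) relevant $Q'$ gives the stated decay \emph{with no multiplicative loss accumulating}. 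Iterating, $U_{m\ell}\le(1-c_0p^{-Q})^mU_0\lesssim (1-c_0p^{-Q})^mR^Q$, while regularity gives $U_{m\ell}\gtrsim (R2^{-m\ell})^Q\,\#\{Q\in\mathcal{D}_{k_0+m\ell}:Q\cap Y\cap B(y_*,R)\ne\emptyset\}$; writing $t=R2^{-m\ell}$, the set $Y\cap B(y_*,R)$ is covered by $\lesssim (R/t)^Q(1-c_0p^{-Q})^m=(R/t)^{Q-\e}$ sets of diameter $\lesssim t$, where $\e=\log_2\!\big(1/(1-c_0p^{-Q})\big)/\ell>0$. Interpolating over the scales $R2^{-m\ell}$ yields $\dim_A(Y)\le Q-\e$.

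\textbf{$\dim_A(Y)\le Q-\e$ $\Rightarrow$ $p$-porosity.} Here $Y$ is $(C_1,Q-\tfrac{\e}{2})$-homogeneous for some $C_1$. Fix $y\in Y$, $R\in(0,\diam X)$, and a parameter $p\ge2$. Covering $Y\cap B(y,R)$ by $\le C_1p^{Q-\e/2}$ balls of radius $R/p$ and enlarging each to radius $2R/p$, $Q$-regularity gives $\mu\big(B(Y\cap B(y,R),R/p)\big)\lesssim_C C_1p^{-\e/2}R^Q$, whereas $\mu(B(y,R/2))\gtrsim_C R^Q$. If $p$ is large enough (depending on $\e,C,Q,C_1$) the former is strictly smaller, so there is $x\in B(y,R/2)$ with $\dist(x,Y\cap B(y,R))\ge R/p$; since $x\in B(y,R/2)$ and $p\ge2$ also force $\dist(x,Y\setminus B(y,R))\ge R/p$, we get $B(x,R/p)\subset B(y,R)\setminus Y$, i.e.\ $Y$ is $p$-porous. (Eliminating the dependence of $p$ on $C_1$, to match the statement verbatim, requires the slightly more careful bookkeeping of the cited reference.)

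\textbf{Main obstacle.} The delicate implication is the first one. A single porosity hole of relative size $1/p$ deletes only a $\sim p^{-Q}$ fraction at each super-scale, and iterating directly with balls (maximal separated sets and covering numbers) loses a fixed factor $\gtrsim C^2 4^Q>1$ per step, which for large $p$ swamps the gain and produces only the trivial bound $\dim_A(Y)\le Q$. The reason to pass to an honest partition into dyadic cubes and to track the \emph{measure} of the union of cubes still meeting $Y$ is exactly that this quantity contracts multiplicatively with no overhead, turning the per-scale gain $1-c_0p^{-Q}$ into a genuine dimension drop. The remaining ingredients — existence and basic properties of dyadic cubes, the two elementary geometric facts about holes inside cubes, and the interpolation between dyadic scales — are routine.
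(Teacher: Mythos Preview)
The paper does not prove this lemma; it is quoted from \cite[Lemma 3.12]{Bonk:2001} and used as a black box, so there is no in-paper argument to compare against.

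Your proof is correct and follows the standard route. In the forward direction, passing to Christ dyadic cubes and tracking the \emph{measure} of the union of cubes meeting $Y$ is exactly the device that avoids the multiplicative covering-number overhead you identify; the contraction $U_{j+\ell}\le(1-c_0p^{-Q})U_j$ and the resulting exponent drop are handled cleanly. The backward direction via the volume comparison $\mu(B(Y\cap B(y,R),R/p))\ll \mu(B(y,R/2))$ is the expected argument. Your parenthetical about the residual dependence of $p$ on the homogeneity constant $C_1$ is an honest caveat: as stated, the lemma's claim that ``$\e$ and $p$ depend only on each other, $Q$, and $C$'' is slightly loose in the backward direction, since $\dim_A(Y)\le Q-\e$ alone (an infimum condition) does not pin down $C_1$; the cited reference carries the homogeneity constant through explicitly.
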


\subsection{Poincar\'e inequality}\label{subsec:Poincare}

Given a locally Lipschitz function $u$ defined on a metric space $(X,d)$, we say that a function $g:X \to [0,\infty)$ is an \emph{upper gradient} of $u$ if
\[ |u(x) - u(y)| \le \int_{\gamma} g\,\d s \]
for all $x,y \in X$ and all paths $\gamma$ in $X$ joining $x$ with $y$.
	
We say that a metric measure space $(X,d,\mu)$ supports a \emph{$(1,p)$-Poincar\'e inequality} (or simply a \emph{$p$-Poincar\'e inequality}) for some $1 \leq p < \infty$ if there exist $\lambda \geq 1$ and $C > 1$ with the following property: if $u:X \to \R$ is locally Lipschitz and $g:X \to [0,\infty)$ is an upper gradient of $u$, then, for all $x \in X$ and $r > 0$,
\begin{equation}\label{eq:PI} 
\fint_{B(x,r)} |u - u_{B(x,r)}|\,\d\mu \le C\diam(B(x,\lambda r))\left(\fint_{B(x,\lambda r)} g^p\,\d\mu\right)^{1/p}, 
\end{equation}
where
\[ \fint_A f\,\d\mu = \frac{1}{\mu(A)}\int_A f\,\d\mu \]
and
\[ u_{B(x,r)} = \fint_{B(x,r)} u\,\d\mu. \]

It follows from H\"older's inequality that if $1\leq p \leq q$ and $(X,d,\mu)$ satisfies a $p$-Poincar\'e inequality, then it satisfies a $q$-Poincar\'e inequality. Moreover, if the space is geodesic and doubling, then one can choose $\lambda=1$; see for example \cite[Remark 9.1.19]{HKST}. Henceforth, given a geodesic doubling space $X$ that satisfies the $p$-Poincar\'e inequality, we will assume that $\lambda=1$ in \eqref{eq:PI} and the constant $C$ will be called the \emph{data} of the Poincar\'e inequality.

For a detailed exposition on the Poincar\'e inequality on metric measure spaces, the reader is referred to \cite{HKST}.

\subsection{Modulus of curve families}

The basic tool in the proof of Theorem \ref{thm:1} and Proposition \ref{Prop:Unif} is the notion of the modulus of curves. 
In a sense, the modulus is a measurement of ``how many'' rectifiable curves are contained in a curve family.

Given a family $\Gamma$ of rectifiable curves in a metric measure space $(X,d,\mu)$, we say that a Borel function $\rho:X \to [0,\infty)$ is admissible for $\Gamma$ if
\[ \int_{\gamma} \rho\,\d s \ge 1 \quad \text{ for all } \gamma \in \Gamma. \]
For $p \ge 1$, we define the \emph{$p$-modulus} of $\Gamma$ by
\[ \Mod_p(\Gamma) := \inf\left\{\int_X \rho^p\,\d\mu : \rho \text{ is admissible for } \Gamma\right\}. \]
It is well known that $\Mod_p$ is an outer measure on the space of all curve families in $X$.

The next lemma relates the modulus of curve families with the \emph{locally Lipschitz capacity} between compact sets.
Given two sets $E$ and $F$ in a metric space, we say that a curve $\gamma$ joins $E$ with $F$ if there are points $x \in E$ and $y \in F$ such that $\gamma$ joins $x$ with $y$.

\begin{lem}[{\cite[Theorem 1.1]{Kallunki:2001}}]\label{Lem:Cap}
Suppose that $(X,d,\mu)$ is a geodesic metric measure space equipped with a doubling measure $\mu$ and supporting a $p$-Poincar\'e inequality with $p>1$, and suppose that $\Omega$ is a domain in $X$. Let $E,F$ be disjoint, compact, non-empty subsets of $\Omega$, and let $\Gamma$ be the collection of curves in $\Omega$ that join $E$ with $F$. Then the $p$-modulus of $\Gamma$ is equal to the $p$-capacity of $E$ and $F$:
\[ \Mod_p(\Gamma) = \Capa_p(E,F) := \inf \int_{\Omega} g^p\,\d\mu, \]
where the infimum is taken over all 
Borel functions $g:\Omega \to [0,\infty)$ such that each $g$ is an upper gradient of some
locally Lipschitz function $u:\Omega \to \R$ satisfying $u|_E \ge 1$ and $u|_F \le 0$.
\end{lem}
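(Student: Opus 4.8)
The plan is to prove the two inequalities separately; the elementary one requires no geometric hypotheses, and all the work is in the other. For $\Mod_p(\Gamma) \le \Capa_p(E,F)$: if $u:\Omega \to \R$ is locally Lipschitz with $u|_E \ge 1$, $u|_F \le 0$, and $g$ is an upper gradient of $u$ in $\Omega$, then for every $\gamma \in \Gamma$ joining a point $x \in E$ to a point $y \in F$ the upper gradient inequality gives $\int_\gamma g\,\d s \ge |u(x)-u(y)| \ge 1$; since every curve of $\Gamma$ lies in $\Omega$, the extension of $g$ by $0$ to $X$ is admissible for $\Gamma$, whence $\Mod_p(\Gamma) \le \int_\Omega g^p\,\d\mu$, and taking the infimum over all such pairs $(u,g)$ proves it.

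For the reverse inequality I would fix $\e>0$, pick a Borel $\rho$ admissible for $\Gamma$ with $\int_X\rho^p\,\d\mu < \Mod_p(\Gamma)+\e$, and (by the Vitali--Carath\'eodory theorem) assume $\rho$ lower semicontinuous. The candidate potential is the $\rho$-distance to $F$,
\[
w(x) = \inf\left\{ \int_\gamma \rho\,\d s : \gamma \text{ a rectifiable curve in } \Omega \text{ joining } x \text{ with } F \right\}, \qquad u := \min\{1,w\}.
\]
Concatenation of curves shows $|w(x)-w(y)| \le \int_\gamma\rho\,\d s$ for every rectifiable $\gamma$ in $\Omega$ from $x$ to $y$, so $\rho$ is an upper gradient of $w$, hence of $u$. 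Also $u|_F=0$, and for $x \in E$ every rectifiable curve in $\Omega$ from $x$ to $F$ lies in $\Gamma$, so $w(x)\ge 1$ and $u|_E=1$. Thus $u$ is a potential for the condenser $(E,F)$ whose upper gradient $\rho$ has energy $<\Mod_p(\Gamma)+\e$; the only thing missing is that $u$ need not be locally Lipschitz.

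The crux is to regularize $u$, and this is where doubling and the $p$-Poincar\'e inequality are used. First I would note that if $\rho$ were continuous one is done, since then $\rho$ is bounded on compact subsets of $\Omega$ and, $X$ being geodesic, $|w(x)-w(y)| \le \int_{[x,y]}\rho\,\d s$ forces $u$ to be locally Lipschitz. In general one shows $u \in N^{1,p}_{\mathrm{loc}}(\Omega)$ and replaces it by discrete convolutions $u_\d(x) = \sum_i \varphi_i(x)\,u_{B_i}$ attached to a Whitney-type cover of $\Omega$ by balls $B_i$ of radius comparable to their scale $\d$, with $\{\varphi_i\}$ a Lipschitz partition of unity subordinate to $\{2B_i\}$ and $u_{B_i}=\fint_{B_i}u\,\d\mu$. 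The $p$-Poincar\'e inequality yields an upper gradient of $u_\d$ pointwise dominated by a localized Hardy--Littlewood maximal function of $\rho$; since $p>1$ this operator is bounded on $L^p$, so each $u_\d$ is locally Lipschitz, $u_\d \to u$ in $N^{1,p}_{\mathrm{loc}}(\Omega)$, and $\int_\Omega g_{u_\d}^p\,\d\mu \lesssim \int_\Omega\rho^p\,\d\mu$. Truncating $u_\d$ into $[0,1]$ does not raise the upper gradient, and a renormalization $(u_\d-\d)/(1-2\d)$, combined with the convergence of $u_\d$ to $u$ near the compact sets $E$ and $F$ where $u$ is identically $1$ and $0$, produces locally Lipschitz test functions for $\Capa_p(E,F)$ whose energies tend to at most $\int_\Omega\rho^p\,\d\mu < \Mod_p(\Gamma)+\e$; letting $\e \to 0$ finishes the proof.

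The hard part, I expect, is exactly this regularization: one must make the maximal-function loss in the Poincar\'e estimate vanish in the limit so that the final constant is $1$ (not merely one depending on the data of the space), and simultaneously keep the boundary inequalities $\ge 1$ on $E$ and $\le 0$ on $F$ intact under the approximation, which is delicate because $u$ need not be continuous near $E$ or $F$. A cleaner route that isolates these two issues is to first show, by modulus theory alone, that $\Mod_p(\Gamma)$ equals the Newtonian condenser capacity of $(E,F)$ --- using that a $p$-weak upper gradient becomes a genuine upper gradient after adding an $L^p$ function of arbitrarily small norm that is infinite on each exceptional curve --- and then invoke the density of locally Lipschitz functions in $N^{1,p}(\Omega)$, valid in complete doubling spaces with a $p$-Poincar\'e inequality, using truncation and the quasicontinuity of Newtonian functions to transfer the condenser data from Newtonian to locally Lipschitz competitors.
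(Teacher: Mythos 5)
This lemma is not proved in the paper; it is quoted verbatim from Kallunki--Shanmugalingam, and the paper simply invokes it. So there is no internal proof to compare against, and I will assess your sketch on its own merits against what is actually needed.

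Your easy direction ($\Mod_p(\Gamma) \le \Capa_p(E,F)$) is correct and complete as stated. Your construction of the $\rho$-distance $w$, the truncation $u=\min\{1,w\}$, and the observation that $\rho$ is an upper gradient of $u$ with the right boundary values, is the right potential and matches the structure of the cited proof. The gap is in your first regularization pass. The discrete convolution $u_\delta$ has an upper gradient dominated by a restricted maximal function of $\rho$, which yields $\int_\Omega g_{u_\delta}^p\,\d\mu \le C\int_\Omega \rho^p\,\d\mu$ with a constant $C>1$ depending on the doubling and Poincar\'e data, and nothing in what you wrote makes that $C$ disappear as $\delta\to 0$. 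The assertion ``$u_\delta \to u$ in $N^{1,p}_{\mathrm{loc}}(\Omega)$'' is not a consequence of the pointwise maximal-function estimate; it \emph{is} Shanmugalingam's density theorem, whose proof requires Mazur-type convex combinations and the stability of weak upper gradients under weak $L^p$ limits, not just the maximal operator bound. Likewise the renormalization $(u_\delta-\delta)/(1-2\delta)$ repairs only the boundary normalization, not the constant loss. As written, the first route therefore does not close.

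Your ``cleaner route'' is the one that actually works and is essentially what Kallunki and Shanmugalingam do: (i) show $\Mod_p(\Gamma)$ equals the Newtonian condenser capacity with exact constant, where $u=\min\{1,w\}$ is an admissible $N^{1,p}$-competitor with genuine upper gradient $\rho$ and the reverse inequality follows from the exceptional-set technology for $p$-weak upper gradients; and (ii) invoke the density of locally Lipschitz functions in $N^{1,p}$ (here is where doubling, the $p$-Poincar\'e inequality, and $p>1$ enter) together with truncation and quasicontinuity to transfer the condenser boundary constraints $u|_E\ge 1$, $u|_F\le 0$ to locally Lipschitz competitors without energy loss. You correctly identify both the role of the density theorem and the delicacy of preserving the boundary inequalities under the Lipschitz approximation; making those two points rigorous is exactly the technical content of the cited result. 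So the outline is sound, but the proof is genuinely incomplete until the density theorem and the quasicontinuity bookkeeping are supplied rather than gestured at.
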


\section{Uniformity in metric measure spaces}\label{sec:uniform}

The goal of this section is the proof of Proposition \ref{Prop:Unif}. The next lemmas are the crux of the proof.

\begin{lem}\label{Lem:Path1}
Let $(X,d,\mu)$ be a $(C_1,Q)$-Ahlfors regular geodesic metric measure space supporting a $p$-Poincar\'e inequality with data $C$, for some $C,C_1 \geq 1$, and $1<p<Q$. Let $x,y \in X$, let $r\in (0,\frac13d(x,y))$, and let $\Gamma$ be the collection of paths in $B(x,2d(x,y))$ that connect $\overline{B}(x,r)$ with $\overline{B}(y,r)$. Then
\[ \Mod_p\left(\Gamma\right) \gsim_{p,C,C_1,Q} (d(x,y))^{Q-p}\left(\frac{d(x,y)}{r}\right)^{-Qp}.\]
\end{lem}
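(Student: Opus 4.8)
The strategy is to obtain the lower bound on $\Mod_p(\Gamma)$ by an upper bound on the $p$-modulus of a "dual" family and using the basic estimates available in Ahlfors regular Poincaré spaces. More directly, I would argue by contradiction/contrapositive on the definition of modulus: let $\rho$ be any admissible function for $\Gamma$, and I must show $\int_X \rho^p\,d\mu \gtrsim (d(x,y))^{Q-p}(d(x,y)/r)^{-Qp}$. Write $R = d(x,y)$ and normalize. The key observation is that if $\rho$ is admissible for $\Gamma$, then the function $u(z) = \min\{1, \int_{\gamma_z}\rho\,ds\}$, where the infimum defining $u$ runs over paths from $\overline B(x,r)$ to $z$ inside $B(x,2\lambda R)$, is a Lipschitz (or at least Newtonian/locally Lipschitz after mollification) function with upper gradient $\rho$, satisfying $u \equiv 0$ on $\overline B(x,r)$ and $u \equiv 1$ on $\overline B(y,r)$. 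Thus $\int_X \rho^p\,d\mu$ dominates the $p$-capacity $\Capa_p(\overline B(x,r), \overline B(y,r))$ computed inside the domain $\Omega = B(x,2\lambda R)$, which by Lemma 2.14 equals $\Mod_p$ of the connecting family — so in fact the real content is a lower capacity bound, and I would invoke Lemma 2.14 to pass freely between the two pictures.

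To lower-bound the capacity, I would use the standard telescoping/Poincaré argument. Fix the admissible pair $(u,\rho)$ as above. Apply the $p$-Poincaré inequality on the chain of balls $B(x, 2^{-k}\cdot c R)$ for $k = 0,1,\dots, k_0$ where $2^{-k_0} cR \simeq r$, together with the analogous chain centered at $y$; since $u_{B(x,cR)}$ and $u_{B(y,cR)}$ can differ by at most what the Poincaré inequality controls on the single ball $B(x, 2\lambda R)$ containing both, while the telescoping forces $u_{B(x,2^{-k_0}cR)} \to u(x) = 0$ and $u_{B(y,2^{-k_0}cR)} \to u(y) = 1$ (using Lebesgue differentiation, valid since $\mu$ is doubling and $u$ is continuous). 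Combining these gives
\[
1 \;=\; |u(x) - u(y)| \;\lesssim\; \sum_{k=0}^{k_0} \diam\bigl(B(\cdot, 2\lambda 2^{-k} cR)\bigr)\left(\fint_{B(\cdot,2\lambda 2^{-k}cR)} \rho^p\,d\mu\right)^{1/p},
\]
and then Hölder's inequality in $k$ (summing over roughly $k_0 \simeq \log(R/r)$ scales) converts the sum of $L^p$-averages into a single bound of the form $1 \lesssim (\log(R/r))^{1-1/p}\bigl(\sum_k (2^{-k}R)^{p}\mu(B)^{-1}\int_B \rho^p\bigr)^{1/p}$; using $Q$-regularity, $\mu(B(\cdot,2^{-k}cR)) \simeq (2^{-k}R)^Q$, this becomes $1 \lesssim (\log(R/r))^{1-1/p}\bigl(\sum_k (2^{-k}R)^{p-Q}\int_{B(\cdot,2\lambda 2^{-k}cR)}\rho^p\,d\mu\bigr)^{1/p}$. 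Since $p < Q$ the weights $(2^{-k}R)^{p-Q}$ are smallest at $k=0$, where they equal $R^{p-Q}$, so the crudest possible estimate (bounding every weight by the $k=0$ weight is the wrong direction — instead I bound the largest weight $(r)^{p-Q}$ crudely, or more carefully keep track of the worst scale $k_0$). Rearranging gives $\int_X \rho^p\,d\mu \gtrsim (\log(R/r))^{-(p-1)} \cdot R^{Q-p}$ in the cleanest version; the stated bound with the polynomial factor $(R/r)^{-Qp}$ is a weaker (and hence easier) conclusion, obtained by the very crude estimate $\log(R/r) \lesssim (R/r)^{Q}$ applied $p$ times, or simply by bounding the telescoping sum even more wastefully at the finest scale.

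The main obstacle I anticipate is twofold: first, carefully justifying that the competitor function $u$ built from $\rho$ is genuinely admissible in Lemma 2.14's sense (locally Lipschitz with $\rho$ as an upper gradient) — this is a standard but slightly delicate regularization/Newtonian-space argument, and one may prefer to work directly with $\Mod_p$ rather than with $\Capa_p$ to sidestep it, exploiting that $\Mod_p$ is an outer measure and using the serial/parallel rules for modulus on the chain of annuli $B(x,2^{-k}cR)\setminus B(x,2^{-k-1}cR)$. Second, keeping the dependence of all implied constants strictly on $p, C, C_1, \lambda, Q$ — the quantitative bookkeeping through the telescoping chain, the choice of the geometric constant $c = c(\lambda)$ ensuring all dilated balls $B(\cdot, 2\lambda 2^{-k}cR)$ stay inside $B(x,2\lambda R)$ and remain disjoint from the "wrong" endpoint, and the application of the doubling/Lebesgue-differentiation machinery. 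Since the target inequality is deliberately lossy (a genuinely optimal bound would have only the logarithmic factor), I expect the argument to go through with room to spare once the chain is set up correctly.
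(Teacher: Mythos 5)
Your proposal is correct but takes a genuinely different route from the paper. The paper's proof is a single-scale argument: it applies the $p$-Poincar\'e inequality \emph{once} on the ball $B(x,2D)$ (with $D=d(x,y)$), then restricts the left-hand integral $\int_{B(x,2D)}|u-u_{B(x,2D)}|\,\d\mu$ to the union $B(x,r)\cup B(y,r)$, where the integrand is $\geq \tfrac12$ on at least one of the two small balls (since $u\geq 1$ on one and $u\leq 0$ on the other, and $u_{B(x,2D)}$ is either $\geq\tfrac12$ or $\leq\tfrac12$). Ahlfors regularity then gives $\mu(B(\cdot,r))\simeq r^Q$, and the measure/diameter prefactor contributes $D^{Q-p-Qp}$; multiplying yields exactly the stated bound. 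By contrast, you propose a multi-scale telescoping chain of Poincar\'e inequalities down to scale $r$, followed by H\"older in the scale index. That approach also works (once the endpoints of the chain are handled by noting $u_{B(x,r)}$ and $u_{B(y,r)}$ are pinned --- you do not actually need Lebesgue differentiation at the centers, since you stop the chain at radius $r$ where $u$ has a definite sign), and it in fact gives the sharper estimate $\Mod_p(\Gamma)\gtrsim r^{Q-p}(\log(D/r))^{-(p-1)}$, which dominates the stated bound since $Q(p-1)+p>0$. The trade-off is clear: the paper's one-shot argument is shorter and gives the lossy polynomial bound directly with no chain bookkeeping, while your chain argument buys sharpness at the cost of extra machinery. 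Both proofs ultimately rest on Lemma~\ref{Lem:Cap} to convert between modulus and capacity; your additional sketch of constructing $u$ from $\rho$ via $u(z)=\min\{1,\inf_\gamma\int_\gamma\rho\,\d s\}$ is a legitimate alternative to one direction of that lemma, but as you note it requires some care with regularity, which Lemma~\ref{Lem:Cap} lets one sidestep entirely.
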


\begin{proof}
Set $D = d(x,y)$. Let $u:B(x,2D) \to \R$ be a locally Lipschitz function satisfying $u|_{B(x,r)} \ge 1$ and $u|_{B(y,r)} \le 0$.
Let also $g:B(x,2D) \to [0,\infty)$ be an upper gradient of $u$. By the $p$-Poincar\'e inequality,
\begin{align*}
&\int_{B(x,2D)} g^p\,\d\mu \\
& \ge \frac{\mu(B(x,2D))}{C^p[\diam(B(x,2D))\mu(B(x,2D))]^p}\left(\int_{B(x,2D)} |u - u_{B(x,2D)}|\,\d\mu \right)^p \\
&\gtrsim_{p,C,C_1,Q}D^{Q - p - Qp}\left(\int_{B(x,r) \cup B(y,r)} |u - u_{B(x,2D)}|\,\d\mu\right)^p \\
%& \ge \frac{(2D)^{Q - Qp - p}}{2^pC^pC_1^{p + 1}}\left(\int_{B(x,r) \cup B(y,r)} |u - u_{B(x,2D)}|\,\d\mu\right)^p \\
& \ge D^{Q - p - Qp}\left(\tfrac12\min\{\mu(B(x,r)),\mu(B(y,r))\}\right)^p \\
& \gsim_{p,C_1} D^{Q-p}\left(\frac{D}{r}\right)^{-Qp}. \\
%& \ge \frac{2^{Q - Qp - 2p}r^{Q - p}}{C^pC_1^{2p + 1}}\left(|1 - u_{B(x,2D)}| + |u_{B(x,2D)}|\right)^p\left(\frac{D}{r}\right)^{Q-Qp-p} \\
%& \ge \frac{2^{Q - Qp - 2p}r^{Q - p}}{C^pC_1^{2p + 1}}\left(\frac{D}{r}\right)^{Q-Qp-p}\\
%&\gtrsim_{p,C,C_1,Q} D^{Q-p}\left(\frac{D}{r}\right)^{-Qp}.
\end{align*}

Denote by $\Gamma$ the collection of curves joining $\overline{B}(x,r)$ with $\overline{B}(y,r)$. %$\gamma:[0,1] \to B(x,2D)$ such that $\gamma(0) \in \overline{B}(x,r)$ and $\gamma(1) \in \overline{B}(y,r)$. 
By Lemma \ref{Lem:Cap},
\[ \Mod_p(\Gamma) \gtrsim_{p,C,C_1,Q} D^{Q-p}\left(\frac{D}{r}\right)^{-Qp}.
%\frac{2^{Q - Qp - 2p}}{C^pC_1^{2p + 1}}D^{Q-p}\left(\frac{d}{r}\right)^{-Qp}. 
\qedhere\]
\end{proof}

\begin{lem}\label{Lem:Path2}
Let $(X,d,\mu)$ be a $(C_1,Q)$-Ahlfors regular metric measure space, let $R>0$, let $\ell>0$, and let $\Gamma$ be the collection of paths in $B(x,R)$ that have length at least $\ell R$. Then,
\[ \Mod_p\left(\Gamma\right) \lesssim_{C_1} \ell^{-p}R^{Q-p}.\]
\end{lem}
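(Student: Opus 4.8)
The plan is to produce an admissible function for $\Gamma$ directly from a Vitali-type covering of the ball and to integrate its $p$-th power against the Ahlfors regularity bound. The key observation is that any curve of length at least $\ell R$ inside $B(x,R)$ must cross many ``annular'' scales, so a radially-decaying density integrates to something large along it. Concretely, I would first let $\rho$ be the function $\rho(z) = \tfrac{c}{\ell R}\1_{B(x,R)}(z)$ for a suitable universal constant $c$; then for any $\gamma \in \Gamma$,
\[
\int_\gamma \rho\,\d s = \frac{c}{\ell R}\,\len(\gamma \cap B(x,R)) \geq \frac{c}{\ell R}\cdot \ell R = c \geq 1,
\]
provided $c \geq 1$, so $\rho$ is admissible. (Here I am using that $\gamma$ lies in $B(x,R)$, so the whole curve contributes.) This already makes $\rho$ admissible with $c=1$, and no covering is needed.

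Next I would simply estimate the $p$-energy of this admissible function:
\[
\Mod_p(\Gamma) \leq \int_X \rho^p\,\d\mu = \left(\frac{1}{\ell R}\right)^p \mu(B(x,R)) \leq \frac{1}{(\ell R)^p}\cdot C_1 R^Q = C_1\,\ell^{-p} R^{Q-p},
\]
where the second inequality is the upper Ahlfors regularity bound \eqref{eq:reg} (valid for $R \in (0,\diam X)$; if $R \geq \diam X$ one replaces $R$ by $\diam X$ in the measure bound, or notes $B(x,R) = X$ and applies regularity at the largest admissible scale, which only improves the constant). This gives exactly the claimed bound $\Mod_p(\Gamma) \lesssim_{C_1} \ell^{-p} R^{Q-p}$.

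There is essentially no obstacle here: the statement is a soft upper bound on modulus, and upper bounds on modulus only require exhibiting one admissible function, which the indicator of the ball (suitably normalized) provides because every curve in $\Gamma$ is wholly contained in $B(x,R)$ and has controlled length. The only point requiring mild care is the degenerate regime $R \geq \diam X$, where the Ahlfors upper bound must be applied at scale $\diam X$ rather than $R$; since $\mu(B(x,R)) = \mu(X) \lesssim_{C_1} (\diam X)^Q \leq R^Q$ in that case, the same inequality holds. I would state the proof in two lines: define $\rho$, check admissibility, integrate.
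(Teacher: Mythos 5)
Your proof is correct and is essentially the same as the paper's: both take $\rho = (\ell R)^{-1}\chi_{B(x,R)}$ as the admissible function and integrate its $p$-th power against the Ahlfors upper bound. The extra remark about the regime $R \geq \diam X$ is a reasonable bit of care, but the paper's convention $r \in (0,\diam X)$ in \eqref{eq:reg} implicitly handles it the same way you suggest.
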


\begin{proof}
Note that the function $\rho = (\ell R)^{-1}\chi_{B(x,R)}$ is admissible for $\Gamma$. Therefore
\[ \Mod_p(\Gamma) \le \int_X \rho^p\,\d\mu \le C_1\ell^{-p}R^{Q-p}. \qedhere \]
\end{proof}

\begin{lem}\label{Lem:Path3}
%Given $C_1,C_2 \geq 1$, $1<p<Q$, and $\alpha \in [0,Q-p)$, there exist constants $M' = M'(p,C_1,Q,C_2,\alpha)\geq 1$ with the following property.
Let $(X,d,\mu)$ be a $(C_1,Q)$-Ahlfors regular metric measure space, let $Y \subset X$ be a $(C_2,\alpha)$-homogeneous set, let $R>0$, let $\delta >0$, and let $\Gamma$ be the collection of paths in $B(x,R)$ with an endpoint outside of $B(Y,2\delta R)$ and which intersect $B(Y,\delta R)$. Then
\[ \Mod_p\left(\Gamma\right) \lesssim_{Q,C_1,C_2} \delta^{Q-p-\alpha}R^{Q-p}. \]
\end{lem}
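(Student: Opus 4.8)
The plan is to construct an explicit admissible function $\rho$ for $\Gamma$ supported on the $(\delta R)$-neighborhood of $Y$ inside $B(x,R)$, and then estimate $\int \rho^p\,\d\mu$ using Ahlfors regularity and the homogeneity of $Y$. The key observation is that every curve $\gamma \in \Gamma$ has length at least $2\delta R$ and passes through $B(Y,\delta R)$; hence a portion of $\gamma$ of length at least $\delta R$ is traced either entirely inside $B(Y, 2\delta R)$ or else exits $B(Y,\delta R)$, and in the latter case it must cross the annular shell $B(Y,2\delta R)\setminus B(Y,\delta R)$. Either way, $\gamma$ contains a subcurve of length comparable to $\delta R$ lying in $B(Y,2\delta R)\cap B(x,2R)$. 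Therefore the constant function $\rho = (\delta R)^{-1}\chi_{B(Y,2\delta R)\cap B(x,2R)}$ is (up to an absolute multiplicative constant) admissible for $\Gamma$.

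First I would make the admissibility claim precise: for $\gamma \in \Gamma$, pick $t_0$ with $\gamma(t_0)\in B(Y,\delta R)$; following $\gamma$ away from $t_0$ in one direction for arclength $\delta R$ (possible since $\len(\gamma)\geq 2\delta R$) stays within $B(Y,2\delta R)$ by the triangle inequality, and within $B(x,2R)$ since $\gamma\subset B(x,R)$. Thus $\int_\gamma \rho\,\d s \geq (\delta R)^{-1}\cdot \delta R = 1$, so $\rho$ is admissible.

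Next I would estimate the measure of the set $E := B(Y,2\delta R)\cap B(x,2R)$. Using that $Y$ is $(C_2,\alpha)$-homogeneous, cover $Y\cap B(x,3R)$ by at most $C_2(3R/(\delta R))^\alpha \lesssim_{C_2} \delta^{-\alpha}$ balls of radius $\delta R$; then $E$ is covered by the corresponding balls of radius $3\delta R$, and by Ahlfors $Q$-regularity each such ball has measure $\lesssim_{C_1,Q} (\delta R)^Q$. Hence $\mu(E) \lesssim_{C_1,C_2,Q} \delta^{-\alpha}(\delta R)^Q = \delta^{Q-\alpha}R^Q$. Combining,
\[
\Mod_p(\Gamma) \leq \int_X \rho^p\,\d\mu \lesssim (\delta R)^{-p}\,\mu(E) \lesssim_{Q,C_1,C_2} (\delta R)^{-p}\delta^{Q-\alpha}R^Q = \delta^{Q-p-\alpha}R^{Q-p},
\]
which is the claimed bound. (The absolute constant from admissibility — replacing $\rho$ by $c\rho$ — only changes the implied constant by $c^p$.)

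I do not expect a serious obstacle here; this is a soft covering-and-admissible-function argument. The one point requiring minor care is the bookkeeping when the relevant ball radii slightly exceed $R$ (so that balls like $B(x,2R)$ or $B(x,3R)$ may have radius larger than $\diam X$): one simply intersects with $X$ and notes that Ahlfors regularity still gives $\mu(B)\lesssim r^Q$ for all $r$, since for $r\geq \diam X$ one has $\mu(X)\simeq (\diam X)^Q \leq r^Q$. A second small subtlety is degenerate cases ($Y=\emptyset$, or $\delta R$ larger than $\diam X$), where $\Gamma=\emptyset$ or the estimate is vacuous and the bound holds trivially.
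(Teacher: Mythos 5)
Your proposal is correct and takes essentially the same route as the paper: the same admissible function $\rho = (\delta R)^{-1}\chi_{B(Y,2\delta R)\cap B(x,R)}$ (the paper writes $U$ for $B(Y,2\delta R)$), and the same net-covering estimate for $\mu(B(Y,2\delta R)\cap B(x,R))$ via the homogeneity of $Y$ and Ahlfors regularity of $X$. Your write-up of the admissibility step (pick $t_0$ with $\gamma(t_0)\in B(Y,\delta R)$ and follow $\gamma$ for arclength $\delta R$, using $\len\gamma\geq 2\delta R$ to guarantee one direction works) is a useful expansion of what the paper merely asserts.
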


\begin{proof}
Define the function
\[ \rho := (\delta R)^{-1}\chi_{B(Y,2\delta R) \cap B(x,R)} \]
and note that $\rho$ is admissible for $\Gamma$. Indeed, if $\gamma \in \Gamma$, then the the total length of the part of $\gamma$ that is inside $B(Y,2\delta R)$ must be at least $\delta R$. 
%inside $B(Y,2\delta R)$ since it has an endpoint outside of $B(Y,2\delta R)$ and intersects $B(Y,\delta R)$.

%	\[ \int_{\gamma} \rho\,\d s = \int_{B(X,2\delta r) \cap B(x,ar)} \frac{1}{\delta r}\,\d s \ge \frac{\delta r}{\delta r} = 1. \]
If $V$ is a $(\delta R)$-net of $Y \cap B(x,R)$, then
\[ B(Y,2\delta R) \cap B\left(x,R\right) \subset \bigcup_{v \in V} B(v,3\delta R), \]
and, by the homogeneity of $X$, it follows that $\card(V) \lesssim_{C_2} \delta^{-\alpha}$. Therefore
\[ \Mod_p(\Gamma) \le \int_X \rho^p\,\d\mu \lesssim_{Q,C_1,C_2} \delta^{Q-p-\alpha} R^{Q - p}. \qedhere\]
\end{proof}

\begin{cor}\label{Cor:Path}
Let $(X,d,\mu)$ be a $(C_1,Q)$-regular, geodesic metric measure space supporting a $p$-Poincar\'e inequality with $1<p<Q$ and data $C$. Let $Y\subset X$ be a $(C_2,\alpha)$-homogeneous set with $0<\alpha<Q-p$. Given $x,y \in X\setminus Y$, there exists a path $\gamma:[0,1] \to X\setminus Y$ such that $\gamma(0)=x$, $\gamma(1)=y$,
\begin{enumerate}
\item $\gamma([0,1]) \subset B(x,2 d(x,y))$,
\item
\[ \len(\gamma) \lesssim_{p,C,C_1,Q} d(x,y)\max\left\{1,\left(\frac{d(x,y)}{\dist(\{x,y\},Y)}\right)^Q\right\}\]
\item for all $z$ in the image of $\gamma$,
\[ \dist(z,Y) \gtrsim_{p,\alpha,Q,C,C_1,C_2} d(x,y)\min\left\{1,\left(\frac{\dist(\{x,y\},Y)}{d(x,y)}\right)^{\frac{Qp + Q - p - \alpha}{Q - p - \alpha}}\right\}. \]
\end{enumerate}
\end{cor}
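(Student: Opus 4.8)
The plan is to combine the three modulus estimates from Lemmas \ref{Lem:Path1}, \ref{Lem:Path2}, and \ref{Lem:Path3} with the subadditivity of $\Mod_p$ to produce, by a pigeonhole argument, a single path that is short and stays far from $Y$. Write $D = d(x,y)$ and $s = \dist(\{x,y\}, Y)$. If $D \lesssim s$, then both $x$ and $y$ are already far from $Y$; a geodesic from $x$ to $y$ stays in $B(x,D) \subset B(x, 2\lambda D)$, has length $D$, and remains at distance $\gtrsim s - D \gtrsim D$ (after shrinking constants) or one can slightly perturb — in this easy regime all three conclusions hold with the minimum/maximum equal to $1$. So the substance is the regime $s \ll D$; assume $s \le D/3$ so that Lemma \ref{Lem:Path1} applies with $r$ chosen to be a small multiple of $s$ — say $r = s$, noting $r < \frac13 D$.

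First I would fix the family $\Gamma_0$ of all rectifiable paths in $B(x, 2\lambda D)$ joining $\overline B(x,r)$ to $\overline B(y,r)$ with $r = s$. By Lemma \ref{Lem:Path1}, $\Mod_p(\Gamma_0) \gtrsim D^{Q-p}(D/r)^{-Qp}$; call this lower bound $m_0$. Now I want to delete from $\Gamma_0$ the ``bad'' paths: those that are too long, and those that come too close to $Y$. For the length, let $\Gamma_{\mathrm{long}}$ be the paths in $B(x, 2\lambda D)$ of length at least $\ell \cdot (2\lambda D)$; Lemma \ref{Lem:Path2} (applied with $R = 2\lambda D$) gives $\Mod_p(\Gamma_{\mathrm{long}}) \lesssim \ell^{-p} D^{Q-p}$. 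Choosing $\ell$ to be a suitable power of $D/s$ — large enough that this is at most $\tfrac14 m_0$ — forces $\ell \gtrsim (D/s)^{Q}$ up to constants, which is exactly the exponent appearing in conclusion (2). For the distance to $Y$, let $\Gamma_{\mathrm{near}}$ be the paths in $B(x,2\lambda D)$ that meet the $(\delta \cdot 2\lambda D)$-neighborhood of $Y$ and have length at least $2\delta \cdot 2\lambda D$; Lemma \ref{Lem:Path3} gives $\Mod_p(\Gamma_{\mathrm{near}}) \lesssim \delta^{Q-p-\alpha} D^{Q-p}$, and since $\alpha < Q-p$ the exponent $Q-p-\alpha$ is positive, so choosing $\delta$ a small enough power of $s/D$ makes this at most $\tfrac14 m_0$; solving the resulting inequality $\delta^{Q-p-\alpha} \lesssim (s/D)^{Qp}$ yields $\delta \gtrsim (s/D)^{\frac{Qp}{Q-p-\alpha}}$. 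A short path meeting $B(Y, \delta R)$ but avoiding $\Gamma_{\mathrm{near}}$ must have length below $2\delta R \cdot$const, hence must lie entirely inside $B(Y, O(\delta R))$, so it has small diameter; feeding the lower endpoint location $\dist(\{x,y\},Y) = s$ back in and combining the two length scales produces the claimed exponent $\frac{Qp + Q - p - \alpha}{Q-p-\alpha}$ in conclusion (3) after bookkeeping. (The extra ``$Q-p-\alpha$'' over $Qp$ in the numerator comes precisely from translating ``the path lies in a $\delta R$-neighborhood of $Y$'' into a distance bound that is uniform over the whole image, not just at one point.)

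By subadditivity of $\Mod_p$, $\Mod_p(\Gamma_{\mathrm{long}} \cup \Gamma_{\mathrm{near}}) \le \tfrac12 m_0 < \Mod_p(\Gamma_0)$, so $\Gamma_0 \setminus (\Gamma_{\mathrm{long}} \cup \Gamma_{\mathrm{near}})$ is nonempty — pick $\gamma_0$ in it. Then $\gamma_0$ lies in $B(x, 2\lambda D)$, has length $\lesssim \ell D \lesssim D (D/s)^Q$, and avoids the bad family, so every point of $\gamma_0$ satisfies the distance bound (3). Finally I would patch up two cosmetic points: $\gamma_0$ joins $\overline B(x,r)$ to $\overline B(y,r)$ rather than $x$ to $y$ exactly, so prepend and append geodesics from $x$ to $\gamma_0(0)$ and from $\gamma_0(1)$ to $y$ — these have length $\le r = s \le D$, keep us inside a slightly larger ball absorbed into the constant in (1), and stay at distance $\gtrsim s$ minus a controlled amount from $Y$ which is consistent with (3) after adjusting constants (and if a geodesic strays too close to $Y$ one reroutes it by the same argument at the smaller scale, or simply notes the endpoints are at distance exactly $s$ from $Y$ and the geodesic has length $\le s$). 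Reparametrize the concatenation on $[0,1]$.

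The main obstacle I anticipate is the bookkeeping of exponents in conclusion (3): one must carefully track how the scale $\delta R$ on which the path is trapped near $Y$ interacts with the a priori distance $s$ of the endpoints from $Y$, and verify that the final exponent is indeed $\frac{Qp+Q-p-\alpha}{Q-p-\alpha}$ and not something slightly different — in particular one must be careful that the two ``bad'' families are excised at the same modulus budget $\tfrac14 m_0$ each, and that the chosen powers of $D/s$ for $\ell$ and of $s/D$ for $\delta$ are mutually consistent. A secondary technical point is that Lemma \ref{Lem:Path1} requires $p > 1$ (it uses Lemma \ref{Lem:Cap}), which is fine here since $1 < p < Q$ is hypothesized in the corollary.
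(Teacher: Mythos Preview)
Your approach is essentially the paper's: bound $\Mod_p$ from below via Lemma~\ref{Lem:Path1}, subtract the long and near-$Y$ families using Lemmas~\ref{Lem:Path2} and~\ref{Lem:Path3}, extract a surviving curve, then attach geodesics to reach $x$ and $y$. The paper does this in one stroke by taking $r=\tfrac14\min\{D,s\}$, which also absorbs your ``easy regime'' $D\lesssim s$.

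Two fixable issues. First, your choice $r=s$ is too large for the geodesic attachments: the segment from $x$ to $\gamma_0(0)\in\overline B(x,r)$ can have length up to $s$, so a point on it may lie at distance $s-s=0$ from $Y$; your suggested patch (``the endpoints are at distance exactly $s$ from $Y$ and the geodesic has length $\le s$'') gives nothing. Taking $r$ a proper fraction of $s$ (the paper uses $s/4$) fixes this immediately, yielding $\dist(\cdot,Y)\ge 3s/4$ along the attached geodesics.

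Second, your discussion of how the exponent $\frac{Qp+Q-p-\alpha}{Q-p-\alpha}$ arises is tangled and unnecessary. Every curve in $\Gamma_0$ has length at least $D-2r\ge D/3$, so once $\delta$ is chosen small enough that $2\delta R<D/3$, membership in $\Gamma_0\setminus\Gamma_{\mathrm{near}}$ forces the curve to avoid the $(\delta R)$-neighborhood of $Y$ outright; there is no residual ``short path trapped near $Y$'' case to analyze. This gives $\dist(\gamma_0(t),Y)\ge \delta R\simeq D(s/D)^{Qp/(Q-p-\alpha)}$ directly, which is in fact stronger than the bound stated in conclusion~(3) (the stated exponent equals $\tfrac{Qp}{Q-p-\alpha}+1$, so the corollary records a weaker inequality than the argument actually yields).
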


\begin{proof}
Set $D := d(x,y)$ and
\[ r := \tfrac14\min\left\{D,\dist(\{x,y\},Y)\right\}.\]
Let $\Gamma_1$ be the collection of all curves in $B(x,2D)$ that join $\overline{B}(x,r)$ to $\overline{B}(y,r)$. Let $\Gamma_{\ell}$ be the collection of all curves in $B(x,2D)$ that have length at least $2D\ell$. Let $\Gamma'_{\delta}$ be the collection of all curves in $B(x,2D)$ that intersect a $(2D \delta)$-neighborhood of $Y$ and have length at least $2D \delta$.

By Lemma \ref{Lem:Path1}, Lemma \ref{Lem:Path2}, and Lemma \ref{Lem:Path3}, there exist 
\begin{align*}
\ell &\simeq_{p,C,C_1,Q}\left(\frac{D}{r}\right)^Q \quad\text{and}\quad \delta \simeq_{p,\alpha,Q,C,C_1,C_2} \left( \frac{r}{D}\right)^{\frac{Qp}{Q-p-\alpha}}
\end{align*}
%\delta &\simeq_{p,\alpha,Q,C,C_1,C_2,}\frac{d(x,y)}{r}\left( \frac{r}{d(x,y)}\right)^{\frac{Qp}{Q-p-\alpha}}
%\end{align*} 
such that 
\[ \Mod_p(\Gamma \setminus (\Gamma_l \cup \Gamma_{\delta}')) >0.\]
It follows that $\Gamma \setminus (\Gamma_l \cup \Gamma_{\delta}')$ is non-empty. Fix now $\gamma \in \Gamma \setminus (\Gamma_l \cup \Gamma_{\delta}')$ and concatenate $\gamma$ with geodesic segments $[x,\gamma(0)]$ and $[\gamma(1),y]$. The resulting curve satisfies the conclusions of the corollary.
\end{proof}

\begin{proof}[Proof of Proposition \ref{Prop:Unif}]
By Lemma \ref{lem:por}, the regularity of $X$, and the homogeneity of $Y$, there exists $p_0 > 1$ such that $Y$ is $p_0$-porous.

Fix now $x,y \in X \setminus Y$ and denote $r := d(x,y)$.
There exists $z_0 \in B(x,r) \setminus (B(x,2^{-1}r) \cup B(y,2^{-1}r))$ such that
\[ B(z_0,2^{-1}r/p_0) \subset B(x,r) \setminus (B(x,2^{-1}r) \cup B(y,2^{-1}r) \cup Y) \]
by applying porosity of $Y$ to a ball of radius $2^{-1}r$ contained in
\[ B(x,r) \setminus (B(x,2^{-1}r) \cup B(y,2^{-1}r)). \]
Moreover, for each $n \in \N$, there exist points $z_n \in B(x,2^{-n}r) \setminus B(x,2^{-n - 1}r)$ and $z_{-n} \in B(y,2^{-n}r) \setminus B(y,2^{-n - 1}r)$ such that
\[B(z_n,2^{-n - 1}r/p_0) \subset B(x,2^{-n}r) \setminus (B(x,2^{-n - 1}r) \cup Y)\] 
and 
\[B(z_{-n},2^{-n - 1}r/p_0) \subset B(y,2^{-n}r) \setminus (B(y,2^{-n - 1}r) \cup Y),\]
again by applying the porosity of $Y$ to balls in the annuli $B(x,2^{-n}r) \setminus B(x,2^{-n - 1}r)$ and $B(y,2^{-n}r) \setminus B(y,2^{-n - 1}r)$.

Applying Corollary \ref{Cor:Path}, there exists $c > 1$ depending only on $p_0$, $p$, $Q$, $C$, and $C_1$ such that, for each $n \in \Z$, there exists a path $\gamma_n:[0,1] \to X \setminus Y$ with
\begin{enumerate}
\item $\gamma_n(0) = z_n$, $\gamma_n(1) = z_{n + 1}$,
\item $\text{length}(\gamma_n) \le cd(z_n,z_{n + 1}) \le 2^{3 - |n|}cr$, and
\item for all $t \in [0,1]$, $\dist(\gamma_n(t),Y) \ge c^{-1}2^{-2 - |n|}r$.
\end{enumerate}

Concatenating all the paths $\{\gamma_n\}_{n \in \Z}$ and adding the points $x,y$ we obtain a path $\gamma:[0,1] \to X \setminus Y$. Note that
\[ \text{length}(\gamma) = \sum_{n \in \Z} \text{length}(\gamma_n) \le \sum_{n \in \Z} 2^{3 - |n|}cr = 24cr = 24cd(x,y). \]
Let now $z \in \gamma([0,1])$.
If $z$ is either of $x$ or $y$, then there is nothing to show.
Otherwise, there exists $n \in \Z$ such that $z$ is in the image of $\gamma_n$. Assume as we may that $n \ge 0$. Then
\[ d(x,z) \le d(z_n,x) + d(z_n,z) \le (8c + 1)2^{-n}r \le 4c(8c + 1)\dist(z,Y), \]
which completes the proof.
\end{proof}

\section{Bi-Lipschitz approximation of curves}\label{sec:approx}

In this section we show how paths in geodesic spaces can be approximated by bi-Lipschitz arcs with the same endpoints. The main goal will be the proof of Proposition~\ref{cor:Modi}.

The next lemma is important in the proof of Theorem \ref{thm:1} and is almost identical to \cite[Lemma 4.2]{LW20}. The difference here is the quantitative control on the bi-Lipschitz constant $L$.

\begin{lem}\label{Prop:Modi}
Given $C\geq 1$ and $\e>0$, there exist $L=L(C,\e)\ \geq 1$ with the following property. Let $(X,d)$ be a $C$-doubling geodesic metric space and let $\sigma:[0,1] \to X$ be a curve with $\sigma(0) \neq \sigma(1)$. 
There exists a curve 
%$\frac{L}{\diam{\sigma([0,1])}}$-bi-Lipschitz 
$\gamma:[0,1] \to X$ such that $\gamma(0) = \sigma(0)$, $\gamma(1) = \sigma(1)$, for all $s,t\in [0,1]$
\[\frac1{L}|s-t|\diam{\sigma([0,1])} \leq d(\gamma(s),\gamma(t)) \leq L|s-t|\diam{\sigma([0,1])}, \]
and
\[ \dist(\gamma(t),\sigma([0,1])) \leq \e \diam{\sigma([0,1])}.\]
\end{lem}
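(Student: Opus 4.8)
The plan is to follow the strategy of Lytchak--Wenger \cite[Lemma 4.2]{LW20} but to track all constants explicitly so that $L$ depends only on $C$ and $\e$. After rescaling the metric we may assume $\diam \sigma([0,1]) = 1$, and after reparametrizing we may assume $\sigma$ is $1$-Lipschitz on its (new) domain, or — if $\sigma$ is not rectifiable — we simply work with the image $K := \sigma([0,1])$, a continuum of diameter $1$, since only the image and its connectedness will matter. Set $a := \sigma(0)$, $b := \sigma(1)$, and note $\d := d(a,b) \le 1$; the delicate case is $\d \ll 1$, where the final curve must nonetheless traverse $K$ and come back.

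\textbf{Main construction.} First I would build a finite $\e_0$-chain through $K$, where $\e_0 \simeq \e$ is chosen below. Using that $K$ is a connected, hence $\e_0$-chain-connected, compact set, and using $C$-doubling to bound cardinalities, I would choose a maximal $\e_0$-separated set $\{p_0, \dots, p_N\}$ in $K$ ordered so that consecutive points are within $2\e_0$; doubling gives $N \lesssim_{C} \e_0^{-D}$ for the doubling dimension $D = D(C)$, but in fact all we need is that $N$ is finite. Between consecutive $p_i$ I would insert a geodesic segment $\eta_i$ of length $\le 2\e_0$ (geodesicity of $X$), obtaining a path $\tau$ from $a$ to $b$ (prepending a geodesic from $a=p_0$, appending one to $b=p_N$) whose image lies in the $\e_0$-neighborhood of $K$ and whose total length is $\le 2(N{+}1)\e_0 =: M$, a constant depending only on $C$ and $\e$. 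Reparametrize $\tau$ by constant speed on $[0,1]$; then $\tau$ is $M$-Lipschitz and its image is $\e_0$-close to $K$. The remaining problem is that $\tau$ need not be bi-Lipschitz (it may backtrack, pause, or self-intersect). To fix this I would apply the standard ``straightening'' device: add a small transversal drift. Concretely, embed a controlled perturbation — either thicken by passing to $X \times [0,1]$ with the sup metric is not allowed since we must stay in $X$, so instead I would use the doubling geometry to push $\tau$ onto an injective, bi-Lipschitz track. The cleanest route: subdivide $[0,1]$ into $N{+}1$ subintervals $I_i$ corresponding to the geodesic pieces and, on $I_i$, define $\gamma$ to be the constant-speed geodesic from $q_i$ to $q_{i+1}$ where $q_i$ is a point chosen so that $|q_i - p_i|$ is tiny (much smaller than the minimal gap $\min_{j\ne k} d(p_j,p_k) \gtrsim \e_0$) and the points $q_0, \dots, q_{N+1}$ are in ``general position,'' i.e. the concatenated geodesic path is injective with no cancellation; such $q_i$ exist because in a doubling geodesic space of positive dimension one can perturb endpoints within any ball while keeping a collection of segments pairwise essentially disjoint. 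Then consecutive vertices satisfy $c_0\e_0 \le d(q_i, q_{i+1}) \le 3\e_0$ for a constant $c_0 = c_0(C) > 0$, and the Lipschitz and co-Lipschitz bounds for $\gamma$ follow from a chaining estimate: for $s \in I_i$, $t \in I_j$ with $i \le j$, the upper bound $d(\gamma(s),\gamma(t)) \le \sum_{k=i}^{j} \len(\gamma|_{I_k}) \lesssim (j-i+1)\e_0 \lesssim_{C,\e}|s-t|$ since each $|I_k| \simeq 1/(N{+}1)$; the lower bound $d(\gamma(s),\gamma(t)) \gtrsim_{C,\e} |s-t|$ is where injectivity-with-margin is used — one shows $d(\gamma(s),\gamma(t)) \gtrsim \e_0 (j - i)/(N+1) \gtrsim |s-t|$ by exploiting that the $q_k$ are $c_0\e_0$-separated and the geodesics do not come back near earlier vertices. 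Choosing $\e_0 := \e/10$ (or $\e_0 := c\,\e$ for a suitable $c(C)$) makes $\dist(\gamma(t), K) \le \dist(\gamma(t), \tau) + \e_0 \le \e$, and $\gamma(0) = q_0$ can be taken exactly $a$ and $\gamma(1) = q_{N+1}$ exactly $b$.

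\textbf{The main obstacle.} The hard part is the lower (co-Lipschitz) bound, equivalently arranging that the concatenated geodesic path is genuinely bi-Lipschitz rather than merely injective. Injectivity alone is not quantitative; I need that distinct subintervals $I_i, I_j$ map to parts of $X$ that stay a definite distance $\gtrsim_{C,\e} |i - j|/(N+1)$ apart. This is exactly the content of \cite[Lemma 4.2]{LW20}, and the point of this lemma over theirs is that the number of ``links'' $N$ and hence the bi-Lipschitz constant is controlled by $C$ and $\e$ alone. I would extract the separation by a pigeonhole/maximality argument: since all vertices $q_k$ are $c_0\e_0$-separated and lie in the $\e_0$-neighborhood of $K$, and since each geodesic link has length $\le 3\e_0$, a point on link $I_i$ can be $\e_0$-close to at most boundedly many other links (bound depending on $C$ via doubling), so after discarding and re-indexing — or by a direct counting of how many links pass through a given ball of radius $\e_0$, which is $\lesssim_C 1$ — one gets $d(\gamma(s), \gamma(t)) \gtrsim_C \e_0$ whenever $|i - j|$ exceeds a constant $m = m(C)$, and the intermediate case $|i-j| \le m$ is handled because $\gamma$ restricted to $m{+}1$ consecutive links is a concatenation of boundedly many geodesics with the no-backtracking property, which is bi-Lipschitz with a constant depending on $m$ and $c_0$, hence on $C$. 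Assembling these and rescaling back by $\diam \sigma([0,1])$ yields the claimed $L = L(C,\e)$.
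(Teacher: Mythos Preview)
Your overall strategy---choose an $\e_0$-net in $K=\sigma([0,1])$, chain through it, connect by geodesics, then straighten---matches the paper's. The doubling bound on the number $N$ of vertices is also exactly how the paper controls constants. However, your straightening step has two genuine gaps.

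\textbf{First gap: the perturbation to ``general position.''} You assert that in a doubling geodesic space one can perturb the $p_i$ to nearby $q_i$ so that the concatenated geodesic path is injective with quantitative separation between non-adjacent segments. This is not justified: in a general geodesic space (think of a metric tree, which is doubling and geodesic) geodesics between perturbed endpoints may be forced to overlap on long subsegments, and no small perturbation of the vertices avoids this. ``General position'' arguments require more structure than doubling + geodesic.

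\textbf{Second gap: the lower bound.} You argue that each link $I_i$ can be $\e_0$-close to at most $m=m(C)$ other links (this part is fine, via doubling and the fact that each link sits in a ball $B(q_i,3\e_0)$), and then conclude that $|i-j|>m$ forces $d(\gamma(s),\gamma(t))\gtrsim_C\e_0$. This inference is wrong: the $\le m$ links near $I_i$ need not be the ones with nearby indices. Nothing prevents, say, $I_0,I_{100},I_{200},\dots$ from all passing through the same $\e_0$-ball while each still has only boundedly many close neighbors. ``Discarding and re-indexing'' does not help either, since removing vertices destroys the chain property and hence the $\e$-closeness to $K$. Similarly, for $|i-j|\le m$ you invoke a ``no-backtracking property'' that you have not established.

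\textbf{What the paper does instead.} The paper avoids both issues with a clean inductive trick. It proves a lemma (Lemma~\ref{Lem:BL}): if $f:[0,a]\to X$ is $L$-bi-Lipschitz and $p\in X$, and if $f(b)$ realizes $\dist(f([0,a]),p)$, then the concatenation of $f|_{[0,b]}$ with the geodesic from $f(b)$ to $p$ is $(2L)$-bi-Lipschitz. Starting from the geodesic $[\sigma(0),x_1]$ and appending successively the geodesic from the current curve's closest point to the next chain vertex $x_{k+1}$, one obtains after $n\le C'(\e/4)^{-\alpha}$ steps a $2^{n-1}$-bi-Lipschitz curve from $\sigma(0)$ to $\sigma(1)$ that stays in the $(\e/2)$-neighborhood of $K$. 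No perturbation or separation-counting is needed; the bi-Lipschitz constant is $2^{n-1}$ with $n=n(C,\e)$, which gives the claimed $L(C,\e)$. (The case $d(\sigma(0),\sigma(1))<2\e$ is disposed of first by taking a single geodesic.)
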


The doubling property is not necessary to guarantee the existence of the bi-Lipschitz map $\gamma$; see \cite[Lemma 4.2]{LW20}. It is, however, necessary to control the constant $L$. For example, let $X=\ell_2$, let $e_1,e_2,\dots$ be an orthonormal basis of $\ell_2$, and let $n\in\N$. Define $\sigma:[0,1] \to \ell_2$ so that $\sigma(0)=e_0 :=0$, $\sigma(i/n)=e_i$ for $i\in\{1,\dots,n\}$, and $\sigma|_{[(i-1)/n,i/n]}$ is linear for each $i\in \{1,\dots,n\}$. Note that $\diam\sigma([0,1]) = \sqrt2$. It is easy to see that, if $\e < 1/6$, then for each $i\in \{1,\dots,n-1\}$ the set 
\[ 
B(\sigma([0,1]),\sqrt2\e) 
\setminus B(\sigma(i/n),3\sqrt2\e)
\]
is disconnected. Therefore, if $\gamma$
is a 
path in $\ell_2$
joining $0$ with $e_n$ and
satisfying $\gamma([0,1]) \subset B(\sigma([0,1]),\sqrt2\e)$, then $\gamma([0,1])$ must intersect each ball $B\left(\sigma(i/n),3\sqrt2\e\right)$ for $i = 1,\ldots,n$. In particular, the length of $\gamma$ is at least a fixed multiple of $n$, while $|\gamma(0)-\gamma(1)| = 1$. It follows that, if $\gamma$ is $L$-bi-Lipschitz, then $L$ must depend on $n$ and not just on $\e$.

%The proof of Lemma \ref{Prop:Modi} is similar to that of \cite[Lemma 4.2]{LW20}. 
For the proof of Lemma~\ref{Prop:Modi}, we require a simple lemma. 
%The first is concerned with a form of orthogonality in metric spaces while the second allows us to con. 
Here and for the rest of this section, all geodesic curves are parameterized by arc-length.

\begin{lem}\label{Lem:BL}
Let $X$ be a geodesic metric space, let $a\geq b>0$, let $f:[0,a] \to X$ be $L$-bi-Lipschitz, let $p \in X$, and suppose $f(b)$ is the closest point in $f([0,a])$ to $p$, i.e.,
\[ c:= \dist(f([0,a]),p) = d(f(b),p).\]
If $g:[b,b+c] \to X$ is the geodesic from $f(b)$ to $p$, then the concatenation of $f|_{[0,b]}$ and $g$ is $(2L)$-bi-Lipschitz.
\end{lem}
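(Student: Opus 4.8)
The plan is to show that the concatenation $h$ of $f|_{[0,b]}$ and the geodesic $g$ is bi-Lipschitz by checking the two-sided inequality on pairs of points, splitting into cases according to which ``piece'' each point lies in. Parameterize $h$ on $[0,b+c]$ by setting $h = f$ on $[0,b]$ and $h = g$ on $[b,b+c]$, so $h$ is continuous and, being a concatenation of arc-length parameterized geodesics up to the $L$-bi-Lipschitz reparameterization of $f$, the only issue is comparing $d(h(s),h(t))$ with $|s-t|$.

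First I would dispatch the two easy cases. If $s,t \in [0,b]$, then $d(h(s),h(t)) = d(f(s),f(t))$ and $L$-bi-Lipschitzness of $f$ gives the bound immediately (even with constant $L \le 2L$). If $s,t \in [b,b+c]$, then $h(s),h(t)$ lie on the arc-length geodesic $g$, so $d(h(s),h(t)) = |s-t|$, which is trivially $1$-bi-Lipschitz. The substantive case is $s \in [0,b]$ and $t \in [b,b+c]$. The upper bound is routine from the triangle inequality: $d(h(s),h(t)) \le d(f(s),f(b)) + d(f(b),g(t)) \le L(b-s) + (t-b) \le L|s-t|$.

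The main obstacle is the lower bound in the mixed case, i.e. showing $d(h(s),h(t)) \ge (2L)^{-1}|s-t|$ when $s \in [0,b]$, $t \in [b,b+c]$. Write $|s-t| = (b-s) + (t-b)$. The key geometric input is that $f(b)$ is the closest point of $f([0,a])$ to $p$, so $f(s) \notin B(p, c)$ while $g(t)$ lies on the geodesic $[f(b),p]$ at distance $t-b$ from $f(b)$, hence $d(g(t),p) = c - (t-b)$. I would argue: on one hand, $d(h(s),h(t)) = d(f(s),g(t)) \ge d(f(s),p) - d(g(t),p) \ge c - (c-(t-b)) = t-b$; on the other hand, $d(h(s),h(t)) \ge d(f(s),f(b)) - d(f(b),g(t)) \ge L^{-1}(b-s) - (t-b)$. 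These two estimates by themselves are not quite enough when $t-b$ is comparable to $L^{-1}(b-s)$, so I would also use $d(f(s),g(t)) \ge d(f(s),f(b)) - d(f(b),g(t))$ together with the reverse combination $d(f(s),g(t)) \ge (t-b)$ and average appropriately: adding $L$ times the first displayed bound to the second yields $(L+1)d(h(s),h(t)) \ge L^{-1}(b-s) + (L-1)(t-b)$, wait — more cleanly, from $d(h(s),h(t)) \ge t-b$ and $d(h(s),h(t)) \ge L^{-1}(b-s) - (t-b)$ one gets $2 d(h(s),h(t)) \ge L^{-1}(b-s)$, hence $d(h(s),h(t)) \ge \tfrac12 L^{-1}(b-s)$; combining with $d(h(s),h(t)) \ge t-b \ge \tfrac12 L^{-1}(t-b)$ (as $L \ge 1$) and averaging the two gives $d(h(s),h(t)) \ge \tfrac12 L^{-1}\cdot\tfrac12((b-s)+(t-b)) $; one sees $(2L)^{-1}$ is attainable with a slightly more careful bookkeeping of which inequality to add to which. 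I would write out this arithmetic carefully to land exactly on the constant $2L$, which is the one delicate point; everything else is the triangle inequality.
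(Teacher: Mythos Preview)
Your outline matches the paper's proof: same case split, same upper bound via the triangle inequality through $h(b)$, and for the mixed lower bound the same first inequality $d(h(s),h(t)) \ge t-b$ (this is exactly the paper's claim $d(h(t),h(s)) \ge d(h(t),h(b))$). Where you and the paper diverge is the second inequality: you use the honest triangle-inequality bound $d(h(s),h(t)) \ge L^{-1}(b-s) - (t-b)$, while the paper asserts ``Similarly, $d(h(s),h(t)) \ge d(h(s),h(b))$'' and then averages its two claims to land directly on $(2L)^{-1}|s-t|$.

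Your suspicion that reaching $(2L)^{-1}$ is delicate is more than justified --- it is in fact impossible, because the paper's ``similarly'' claim is false. Take $X = \R^2$, $a = b = \pi$, $f(\theta) = (1+\cos\theta,\sin\theta)$ (the upper unit semicircle about $(1,0)$, arc-length parameterized, so $L = \pi/2$), $p = (1,0)$, $c = 1$, and $g(t) = (t-\pi,0)$. Then $f(0) = (2,0)$ and $g(\pi+1) = (1,0)$, so $d(h(0),h(\pi+1)) = 1 < 2 = d(h(0),h(b))$, contradicting the ``similarly'' claim; moreover $1 < (\pi+1)/\pi = |0-(\pi+1)|/(2L)$, so $h$ is not $(2L)$-bi-Lipschitz. (A tiny outward spiral makes $f(b)$ the \emph{unique} closest point if one insists on that reading.) Your two inequalities are the correct ones; the crude averaging you wrote gives $(4L)^{-1}|s-t|$, and optimizing the convex combination yields the sharp bound $(2L+1)^{-1}|s-t|$, attained by the semicircle example. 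Since every application of this lemma in the paper only needs \emph{some} constant depending on $L$, the correction is harmless --- so your argument is the right one; just replace $2L$ by $2L+1$ (or $4L$) and stop trying to squeeze out more.
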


\begin{proof}
Let $h:[0,b+c] \to X$ be the concatenation of $f|_{[0,b]}$ and $g$. Clearly $h|_{[0,b]}$ is $L$-bi-Lipschitz and $h|_{[b,b+c]}$ is $1$-bi-Lipschitz. 
Fix now $s \in [0,b]$ and $t \in [b,c]$. Then
\[ d(h(s),h(t)) \le d(f(s),f(b)) + d(g(b),g(t)) \le L(b - s) + t - b \le L(t - s).\]

For the lower bound, we claim that $d(h(t),h(s)) \geq d(h(t),h(b))$. Indeed, if this was not the case, then
\begin{align*} 
\dist(f([0,b]),p) &\leq d(h(s),h(t)) + d(h(t),p)\\ 
&< d(h(b),h(t)) + d(h(t),p)\\ 
&= d(f(b),p)\\
&= \dist(f([0,b]),p)
\end{align*}
which is impossible.
Similarly, $d(h(s),h(t)) \ge d(h(s),h(b))$.
Therefore, 
\begin{align*}
d(h(s),h(t)) \geq \tfrac12 d(h(s),h(b)) + \tfrac12d(h(t),h(b)) &\geq (2L)^{-1}|s-b| + \tfrac12|b-t|\\ 
&\geq (2L)^{-1}|s-t|.\qedhere
\end{align*}
\end{proof}

We are now ready to show Lemma \ref{Prop:Modi}.

\begin{proof}[Proof of Lemma \ref{Prop:Modi}]
Without loss of generality, assume that $\diam{\sigma([0,1])} =1$. Since $X$ is doubling, it is $(C',\alpha)$-homogeneous for some $C'>0$ and $\alpha>0$.

Fix $\e > 0$. If $d(\sigma(0),\sigma(1)) < 2\e$, then we can simply define $\gamma$ to be the geodesic from $\sigma(0)$ to $\sigma(1)$ which is $1$-bi-Lipschitz. Assume now that $d(\sigma(0),\sigma(1)) \geq 2\e$. 

Let $Y \subset \sigma([0,1])$ be a maximal $(\e/4)$-separated set that contains $\sigma(0)$ and $\sigma(1)$. Since $\sigma([0,1])$ is connected, there exists a finite sequence of distinct points $x_0, \dots, x_n$ in $Y$ such that $x_0 = \sigma(0)$, $x_n = \sigma(1)$, and $d(x_{i-1},x_i) < \e/2$ for all $i\in\{1,\dots,n\}$. By the homogeneity of $X$, we have that $n \leq C'(\e/4)^{-\alpha}$.
	
We define a curve $\gamma$ inductively. Let $\gamma_1:[0,s_1] \to X$ be a geodesic with $\gamma_1(0) = \sigma(0)$ and $\gamma_1(s_1) = x_1$. Clearly, $\gamma_1$ is $1$-bi-Lipschitz and for all $t\in [0,s_1]$
\[ \dist(\gamma_1(t), \sigma([0,1]) \leq \text{length}(\gamma_1) \leq \e/2.\]
Suppose that for some $k\in \{1,\dots,n-1\}$ we have defined $s_k>0$ and a $2^{k-1}$-bi-Lipschitz curve $\gamma_{k}:[0,s_{k}] \to X$ parameterized by arc-length, such that $\gamma_{k}(0) = \sigma(0)$, $\gamma_{k}(s_{k}) = x_k$, and $\gamma_{k}([0,s_{k}]) \subset \overline{B}(\sigma([0,1]),\e/2)$. Let $r_{k} \in [0,s_{k}]$ be such that 
\[ c_k := \dist(\gamma_{k}([0,s_{k}]), x_{k+1}) = d(\gamma_{k}(r_k),x_{k+1}).\] 
Define $s_{k+1}=r_k+c_k$ and let $\gamma_{k+1}:[0,s_{k+1}] \to X$ be the concatenation of $\gamma_{k}|_{[0,r_{k}]}$ with a geodesic joining $\gamma_{k}(r_k)$ to $x_{k+1} \in \sigma([0,1])$. Note that 
\[ d(\gamma_{k}(r_k),x_{k+1}) \leq d(x_{k},x_{k+1}) < \e/2,\]
so for each $t \in [0,s_{k+1}]$, we have 
\[ \dist(\gamma_{k+1}(t),\sigma([0,1])) < \e/2.\] 
Moreover, by Lemma \ref{Lem:BL}, the curve $\gamma_{k+1}$ is $2^k$-bi-Lipschitz.

By induction, we have defined a number $0< s_n \leq n(\e/2)$ and a $2^{n-1}$-bi-Lipschitz curve $\gamma_n:[0,s_n] \to X$ such that $\gamma_n(0)=\sigma(0)$, $\gamma_n(s_n)=\sigma(1)$, and
\[ \gamma_n([0,s_n]) \subset B(\sigma([0,1]),\e).\]
The desired curve $\gamma:[0,1]\to X$ is the reparameterization $\gamma(t) = \gamma_n(s_nt)$.
\end{proof}

\subsection{Proof of Proposition \ref{cor:Modi}}

The proof of Proposition \ref{cor:Modi} will rely on the quantitative version of Theorem~\ref{thm:1}; see Theorem~\ref{thm:main2}.

We first review some elementary notions from graph theory. A \emph{(combinatorial) graph} is a pair $G=(V,E)$ of a finite vertex set $V$ and an edge set $E$ which contains elements of the form $\{v,v'\}$ where $v,v' \in V$ and $v\neq v'$. A graph $G' = (V',E')$ is a \emph{subgraph} of $G=(V,E)$ if $V'\subset V$, $E'\subset E$, and $E' \subset V' \times V'$. A \emph{simple path} joining $x,y\in V$ in $G$ is a set $ \gamma = \{v_0, \dots, ,v_n\} \subset V$ of distinct points such that $v_0=x$, $v_n=y$, and $\{v_{i-1},v_{i}\} \in E$ for all $i\in\{1,\dots,n\}$. A graph $G$ is \emph{connected} if any two distinct vertices can be joined by a simple path in $G$.

\begin{lem}\label{lem:2-to-1}
Given a graph $G=(V,E)$ and two distinct $v,v' \in V$, there exists a finite sequence $(v_{i})_{i=1}^N$ in $V$ such that $\{v_1,\dots,v_n\} = V$, $v_1=v$, $v_n = v'$, for each $i\in \{1,\dots,n-1\}$, we have $\{v_i,v_{i+1}\} \in E$, and for each $e\in E$ there exists at most two $i\in \{1,\dots,n-1\}$ such that $e = \{v_i,v_{i+1}\}$.
\end{lem}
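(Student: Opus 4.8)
I will restate the lemma in walk terminology: the goal is to produce a \emph{walk} from $v$ to $v'$ in $G$ — a sequence of vertices, consecutive ones adjacent — that visits every vertex of $V$ and traverses each edge at most twice. (Such a walk can exist only if $G$ is connected, which is implicit here and holds in the application to Proposition~\ref{cor:Modi}, so I will assume it.) The plan is induction on $|V|\ge 2$. In the base case $|V|=2$ we have $V=\{v,v'\}$ and, by connectedness, $\{v,v'\}\in E$, so the two–term sequence $v,v'$ works.

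For the inductive step, fix a spanning tree $T$ of $G$ and split according to its leaves. \textbf{Case A:} $T$ has a leaf $w\notin\{v,v'\}$. Let $u$ be the unique neighbour of $w$ in $T$ and set $G':=G\setminus\{w\}$. Then $T\setminus\{w\}$ is a spanning tree of $G'$, so $G'$ is connected and $v,v'\in V(G')$ are still distinct; the inductive hypothesis yields a walk $W'=(v=w_1,\dots,w_k=v')$ in $G'$ covering $V(G')$ and using each edge of $G'$ at most twice. Choosing $j$ with $w_j=u$ and inserting a back-and-forth detour, the walk $(w_1,\dots,w_j,w,w_j,\dots,w_k)$ lies in $G$, now covers all of $V$, uses $\{u,w\}$ exactly twice (this edge is absent from $W'$, since $W'$ avoids $w$), and leaves the multiplicity of every other edge unchanged — so it is the required sequence.

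\textbf{Case B} is the one I expect to be the only real obstacle: every leaf of $T$ lies in $\{v,v'\}$. Since a tree on $\ge 2$ vertices has at least two leaves and $|V|\ge 3$, this forces $T$ to be a path $v=u_0,u_1,\dots,u_m=v'$ with $m\ge 2$. Deleting an interior vertex would disconnect $G$, so instead delete the endpoint $v$: then $T\setminus\{v\}$ is the path $u_1,\dots,u_m$, a spanning tree of $G':=G\setminus\{v\}$, which is therefore connected, and $u_1\ne v'=u_m$ because $m\ge 2$. Applying the inductive hypothesis to $G'$ with endpoints $u_1$ and $v'$ gives a covering walk $W'$ in $G'$ with every edge used at most twice; prepending $v$ — which uses $\{v,u_1\}\in T\subset E$ exactly once, an edge absent from $W'$ — produces the desired walk from $v$ to $v'$. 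This closes the induction.

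One could also bypass the dichotomy by a non-inductive argument: take a depth-first ``Euler tour'' of $T$ rooted at $v$ in which, at each vertex of the $v$-to-$v'$ path in $T$, the branch leading toward $v'$ is explored last; this directly gives a walk that ends at $v'$, traverses each edge of that path once and every other edge of $T$ exactly twice, and visits all vertices. The inductive write-up above is shorter, so I would present that one and only remark on the alternative.
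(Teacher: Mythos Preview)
Your inductive argument is correct. The paper takes the non-inductive route you sketch in your final paragraph: it passes to a spanning tree, identifies the unique tree path $v=v_1,\dots,v_k=v'$, and then for each $v_i$ performs a closed 2-to-1 Euler tour of the subtree hanging off $v_i$ (citing the standard Euler tour technique) before stepping to $v_{i+1}$. So the edges along the $v$--$v'$ path are used once and the remaining tree edges exactly twice, just as you observe.

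The difference is mainly one of packaging. The paper's approach is a direct one-shot construction but leans on an external fact (the 2-to-1 Euler tour of a connected graph). Your induction is self-contained and slightly more elementary, and the leaf-deletion idea makes the edge-multiplicity bookkeeping transparent; the small cost is the case split, with Case~B handling the situation where the spanning tree degenerates to a path. Either would be acceptable here; your closing remark already identifies the paper's version, so you might simply swap which argument is primary if you want to match the original.
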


\begin{proof}
We will use the fact that every connected graph admits a 2-to-1 Euler tour along its edges, that is, for each vertex $z$ there exists a finite sequence $(z_j)_{j=1}^m$ of vertices in $G$ such that $z_1 = z_m = z$, $\{z_j,z_{j+1}\}$ is an edge for all $j$, and for each edge $e$ there exists exactly two $j$ such that $e = \{z_j,z_{j+1}\}$. See for example the Euler tour technique introduced in \cite{TV84}.

Now let $G,v,v'$ be as in the statement. Deleting some edges from $E$, we may assume that $G$ is a (combinatorial) tree, that is, for any two distinct vertices there exists unique simple path in $G$ that connects them. Let $\tilde{V} = \{v_1,\dots,v_k\}$ be the unique such path with $v_1 =v$ and $v_k=v'$. For each $i\in\{1,\dots,k\}$, let $G_i = (V_i,E_i)$ be the maximal subgraph of $G$ with the property that any simple path connecting a vertex of $G_i$ with a vertex of $\tilde{V}$ must contain $v_i$. Since $G$ is connected, it follows that each $G_i$ is connected. Moreover, since $G$ is a tree, for any $i\neq j$ the graphs $G_i$ and $G_j$ are trees with mutually disjoint vertices (and hence edges).

The construction of the finite sequence $(v_i)_i$ is as follows. Firstly, do a 2-to-1 tour of $G_1$ starting and ending on $v_1$. Then proceed to $v_2$ and do a 2-to-1 tour of $G_2$ starting and ending on $v_2$. Continue in this way until reaching $v_k$ where we do a 2-to-1 tour of $G_k$ starting and ending on $v_k$. 
\end{proof}

\begin{proof}[{Proof of Proposition \ref{cor:Modi}}]
If $\diam{K}=0$, then there is nothing to prove. Assume now that $\diam{K}>0$ and, rescaling, we may further assume that $\diam{K} = 1$.

Let $Y$ be a maximal $(\e/4)$-separated subset of $K$ that contains $x$ and $y$. By the regularity of $X$, the cardinality of $Y$ is at most $C'\e^{-Q}$ for some $C'>0$ depending only on the constants of $Q$-regularity. Define a graph $G$ with vertex set $Y$ such that two points $z,z'\in G$ are connected by an edge if and only if $d(z,z')< \e/2$. Since $K$ is connected, it follows that $G$ is connected. By Lemma \ref{lem:2-to-1} there exists a tour $x=v_0, \dots, v_n=y$ of the vertices $Y$ such that each edge is visited at most twice. 

For each $z \in Y$, denote by $m_z$ the number of indices $i$ such that $v_i = z$. There exists $C''>0$ depending only on the constants of $Q$-regularity such that each vertex of $G$ is contained in at most $C''$ edges. Therefore, for each $z\in Y$, $m_z \leq C''$ and it follows that $n\leq C''C'\e^{-Q}$. Moreover, there exists $c>4$ depending only on the constants of $Q$-regularity such that for each $z\in Y$ there exist points $v_{z,1},\dots,v_{z,m_{z}} \in B(z, \e/16)$ such that
\[ d(v_{z,i},v_{z,j}) \geq c^{-1}\e, \quad\text{for all $z\in Y$ and $i\neq j$}.\]
We may also assume that $v_{x,1}=x$ and $v_{y,m_y} = y$.

Given $i\in \{0,\dots,n\}$ let $j(i)$ be the number of indices $l\in\{0,\dots,i\}$ such that $v_l = v_i$. Define now $\tilde{v}_i =  v_{v_i,j(i)}$. Note that the new sequence $\tilde{v}_0, \dots, \tilde{v}_n$ satisfies 
\begin{enumerate}
\item $\tilde{v}_0 = x$, $\tilde{v}_n =y$, 
\item for each distinct $i,j \in \{0,\dots,n\}$ we have $d(\tilde{v}_i,\tilde{v}_j) \geq c^{-1}\e$,
\item for each $z\in K$ there exists $i\in \{0,\dots,n\}$ such that $d(z,\tilde{v}_i) \leq \e/2$,
\item for each $i \in \{0,\dots,n\}$, $\dist(\tilde{v}_i,K) \leq \e/16$.
\end{enumerate}

Define a map $f: \{i\e : i=0,\dots,n\} \to X$ by $f(i\e)=\tilde{v}_i$ and note that $f$ is $L'$-bi-Lipschitz with $L'=\max\{nc,2/\e\}$. Indeed, for any distinct $i,j$ we have $\e \leq |i\e-j\e| \leq n\e$ and $c^{-1}\e \leq d(f(i\e),f(j\e)) \leq 1+2c^{-1}\e < 2$. 
%\textcolor{red}{The furthest apart two points $f(i\eps)$ and $f(j\eps)$ can be is less than 2, since both are within $\eps/16$ of a point of $K$, and $\diam K = 1$. The closest $i\eps$ and $j\eps$ can be is $\eps$, when they are adjacent. Thus the Lipschitz constant of $f$ is at most $2\eps^{-1}$.
%The closest two points $f(i\eps)$ and $f(j\eps)$ can be is $c^{-1}\eps$, while the furthest $i\eps$ and $j\eps$ can be is $n\eps$, so the co-Lipschitz constant of $f$ is at least $(cn)^{-1}$. Thus the bi-Lipschitz constant of $f$ is at most $cn$.}

By Theorem \ref{thm:main2}, there exists a constant $L$ depending on $\e$, the constants of $Q$-regularity, and the data of the Poincar\'e inequality, and there exists a bi-Lipschitz arc $F:[0,n\e] \to X$ that extends $f$ and
\[ \dist_H(K,F([0,n\e])) \lesssim \e. \] 
The arc $\gamma:[0,1] \to X$ in question is obtained by reparameterizing $F$.
\end{proof}

\section{Whitney intervals and a preliminary extension}\label{sec:prelimembeddings}

Here and for the rest of this section we assume that $X$ is a complete geodesic $(C_1,Q)$-Ahlfors regular metric measure space supporting a $p$-Poincar\'e inequality with data $C$ where $p \in (1,Q - 1)$ and $C_1,C>1$. We also assume that $A \subset \R$ is a closed set and that $f: A \to X$ is an $L$-bi-Lipschitz embedding.

Let $I$ be the smallest closed interval with $A\subset I$ (possibly $\R$). We need a Whitney decomposition of $I\setminus A$ as in Whitney's classical proof of his extension theorem \cite{W34}. We may assume that $A$ is not a closed interval itself, as then there is no extension to be made.

\begin{Lem}[{\cite[Theorem VI.1.1,Proposition VI.1.1]{Stein:1970}}]
\label{Thm:Decomp}
There exists a collection of closed intervals $\{\mathcal{Q}_i\}_{i \in \N}$ such that
\begin{enumerate}[(i)]
\item $\bigcup_{i = 1}^{\infty} \mathcal{Q}_i = I \setminus A$,
\item the intervals $\{\mathcal{Q}_i\}$ have disjoint interiors, and
\item $\diam{\mathcal{Q}_i} \le \dist(\mathcal{Q}_i,A) \le 4\diam{\mathcal{Q}_i}$ for all $i \in \mathbb{N}$.
\end{enumerate}
Moreover, if the intervals $\mathcal{Q}_i$ and $\mathcal{Q}_j$ share an endpoint, then
\begin{equation}\label{eq:neighbor}
\tfrac{1}{4}\diam{\mathcal{Q}_j} \le \diam{\mathcal{Q}_i} \le 4\diam{\mathcal{Q}_j}.
\end{equation}
\end{Lem}

Henceforth, the intervals $\{\mathcal{Q}_i\}_{i \in \N}$ will be called \emph{Whitney intervals}.

\subsection{Reference points}\label{sec:endpoints} 

Let $E$ denote the collection of endpoints of $\{\mathcal{Q}_i\}_{i\in\N}$. For each $x \in E$, fix a point $a_x\in A$ that is a closest point of $A$ to $x$, that is, $|x - a_x| = \dist(x,A)$.

\begin{Prop}
\label{Prop:Endpoint}
There exists $\xi \in (0,1)$ and $\tilde L>1$ depending only on $L$, $C_1$, and $Q$, and there exists an $\tilde L$-bi-Lipschitz map $\pi: E \to X$ such that, for all distinct $x,y \in E$, 
\begin{enumerate}
\item $\tfrac14 |x-a_x| \leq d(\pi(x),f(a_x)) \leq 4|x-a_x|$
\item $\dist(\pi(x),f(A)) \geq \xi |x-a_x|$,
\item $d(\pi(x),\pi(y)) \geq \xi\left( |x-a_x| + |y-a_y| \right)$.
\end{enumerate}
Moreover, if $\mathcal{Q}_i = [x,y]$, then 
\begin{align}\label{eq:endpoint} 
d(\pi(x),\pi(y)) \leq d(f(a_x),f(a_y)) + 36\diam{\mathcal{Q}_i} \leq 46L\diam{\mathcal{Q}_i}.
\end{align}
\end{Prop}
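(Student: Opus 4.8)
The plan is to construct $\pi$ explicitly by perturbing $f(a_x)$ slightly for each endpoint $x \in E$, using the porosity of $f(A)$ to find room, and then verify the four listed properties.

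\medskip

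\textbf{Setup via porosity.} First I would observe that, since $X$ is $(C_1,Q)$-regular and supports a $p$-Poincaré inequality with $p < Q-1 < Q$, the set $f(A)$ (being bi-Lipschitz equivalent to $A \subset \R$, hence of Assouad dimension $1 < Q - p$) is $p_0$-porous for some $p_0 = p_0(L, C_1, Q) > 1$, by Lemma~\ref{lem:por}. Thus, for each $x \in E$, inside the ball $B(f(a_x), |x - a_x|)$ there is a sub-ball $B(w_x, |x-a_x|/p_0)$ avoiding $f(A)$. I would then set $\pi(x)$ to be a suitable point on (or near) the ball $B(w_x, |x - a_x|/(2p_0))$, choosing $\xi$ of order $1/(2p_0)$ or smaller. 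To get property (1) I need $\pi(x)$ to satisfy $\tfrac14|x - a_x| \le d(\pi(x), f(a_x)) \le 4|x-a_x|$; since $w_x \in B(f(a_x), |x-a_x|)$, any point within $|x-a_x|/p_0$ of $w_x$ is within $2|x-a_x|$ of $f(a_x)$, so the upper bound is fine, and I can arrange the lower bound by pushing $\pi(x)$ to the far side of the porosity ball from $f(a_x)$, or by a separate elementary choice. Property (2) $\dist(\pi(x), f(A)) \ge \xi|x-a_x|$ is immediate from $\pi(x)$ lying well inside the porosity hole.

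\medskip

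\textbf{Separation of distinct reference points (property 3) and the bi-Lipschitz estimate.} This is the technical heart. For distinct $x, y \in E$ I would split into cases according to whether the Whitney intervals are comparable in size and whether $a_x, a_y$ are close. The key geometric input is that $f$ is $L$-bi-Lipschitz on $A$, so $d(f(a_x), f(a_y)) \simeq_L |a_x - a_y|$, and from the Whitney property $\diam \cQ_i \le \dist(\cQ_i, A) \le 4\diam\cQ_i$ one controls $|x - a_x|$, $|y - a_y|$, $|x - y|$, and $|a_x - a_y|$ against each other. When $|a_x - a_y|$ is large compared to $|x - a_x| + |y - a_y|$, the triangle inequality plus property (1) gives (3) directly. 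When $a_x = a_y$ (or $|a_x - a_y|$ is small), I must use that the porosity holes for $x$ and $y$ are genuinely different — here I would either select the perturbation directions $w_x, w_y$ to lie in a fixed cone/direction so that $d(\pi(x), \pi(y)) \gtrsim \big||x - a_x| - |y - a_y|\big|$ won't suffice, and instead use that $|x - y|$ itself is comparable to $|x - a_x| + |y - a_y|$ in that regime (two Whitney endpoints with a common nearest point of $A$ are separated proportionally to their distances from $A$), combined with a bound $d(\pi(x), \pi(y)) \ge$ (something)$\cdot |x - y| - $ (perturbation errors). For \eqref{eq:endpoint}, when $\cQ_i = [x,y]$ I use property (1) twice: $d(\pi(x), \pi(y)) \le d(\pi(x), f(a_x)) + d(f(a_x), f(a_y)) + d(f(a_y), \pi(y)) \le d(f(a_x), f(a_y)) + 4|x - a_x| + 4|y - a_y|$, and then $|x - a_x|, |y-a_y| \le \dist(\cQ_i, A) + \diam\cQ_i \le 5\diam\cQ_i$ — wait, more carefully $|x-a_x| \le \dist(\cQ_i,A)+\diam\cQ_i$, but $x$ is an endpoint so actually $|x - a_x|$ is close to $\dist(\cQ_i,A) \le 4\diam\cQ_i$; tracking constants gives $\le 36\diam\cQ_i$ for the sum, and then $d(f(a_x),f(a_y)) \le L|a_x - a_y| \le L(\diam\cQ_i + 2\cdot 4\diam\cQ_i) \le$ something, yielding the $46L\diam\cQ_i$ bound. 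These are just bookkeeping once the perturbation size is fixed.

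\medskip

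\textbf{The bi-Lipschitz property of $\pi$ globally.} Finally, to see $\pi: E \to X$ is $\tilde L$-bi-Lipschitz, the lower bound $d(\pi(x),\pi(y)) \gtrsim |x-y|$ follows by combining property (3) with the cases above (in the ``far'' regime, $|x-y| \lesssim |x - a_x| + |y - a_y| + |a_x - a_y|$ and each term is controlled), and the upper bound $d(\pi(x),\pi(y)) \lesssim |x-y|$ follows from property (1), the bi-Lipschitz bound on $f$, and $|a_x - a_y| \lesssim |x - y| + |x - a_x| + |y - a_y|$ together with $|x - a_x| \le \dist(x, A) \le |x - y| + \dist(y,A) \le \ldots$ — i.e., one shows $|x - a_x| \lesssim |x - y|$ whenever $\pi$ is to be evaluated, which holds because nearest-point distances are $1$-Lipschitz. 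The main obstacle I anticipate is property (3) in the degenerate case where two endpoints share (nearly) the same nearest point $a_x = a_y$ in $A$ but the Whitney intervals sit on genuinely different scales or different sides: there I must be careful that the small porosity-perturbations do not destroy a separation that is really coming from $|x - y|$, which forces a judicious (uniform) choice of how large $\xi$ can be relative to $1/p_0$ and the Whitney constants.
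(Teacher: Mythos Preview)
Your outline handles properties (1), (2), \eqref{eq:endpoint}, and the global bi-Lipschitz bookkeeping correctly, but there is a genuine gap in property (3), precisely in the case you flag at the end. When $a_x = a_y$ (or $|a_x - a_y|$ is small) and the scales $|x-a_x|$, $|y-a_y|$ are comparable, the porosity holes you use to define $\pi(x)$ and $\pi(y)$ are sub-balls of heavily overlapping balls $B(f(a_x),|x-a_x|)$ and $B(f(a_y),|y-a_y|)$, and nothing prevents them from being the \emph{same} hole. Your suggested rescue, ``$d(\pi(x),\pi(y)) \ge (\text{something})\cdot|x-y| - (\text{perturbation errors})$'', is circular: the only bridge between $\R$ and $X$ is $f$, which is defined only on $A$, so once $f(a_x)=f(a_y)$ the quantity $|x-y|$ carries no information about $d(\pi(x),\pi(y))$. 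No uniform choice of $\xi$ relative to $1/p_0$ fixes this; infinitely many endpoints can have the same nearest point in $A$ at comparable scales.

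The paper resolves this with an additional combinatorial layer you are missing. First (Lemma~\ref{Lem:Filter}) it colors $E$ with finitely many colors $E_1,\dots,E_n$, $n=n(L,C_1,Q)$, so that any two same-colored endpoints are either far apart in $\R$ relative to their distances to $A$, or live at very different scales; in either regime the naive porosity choice $\tilde x$ already satisfies (3) within that color class. Then $\pi$ is built inductively color by color: when adding $x\in E_{k+1}$, the point $\tilde x$ may conflict with finitely many previously placed $\pi(y_j)$ (the number is bounded via Ahlfors regularity), and one applies porosity \emph{of this finite set} inside a small ball around $\tilde x$ to nudge $\pi(x)$ away from them. The separation constant $\xi_k$ degrades at each of the $n$ steps but stays controlled since $n$ is fixed. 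This coloring-plus-finite-avoidance mechanism is the missing idea.
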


We start with a result that allows us to partition $E$ into a finite number of subsets such that elements of the same subset are far apart quantitatively. Recall that by Lemma \ref{lem:por}, there exists $p_0\ge 1$ depending only on $L$, $C_1$, and $Q$ such that $f(A)$ is $p_0$-porous.

\begin{Lem}\label{Lem:Filter}
There exists $n\in\N$ depending only on $L$, $C_1$, and $Q$, and there exists a partition of $E$ into mutually disjoint sets $E_1,\dots,E_n$ such that for any $i \in \{1,\dots,n\}$ and for any $x,y \in E_i$, either
\begin{enumerate}[(F1)]
\item $|x - y| > (12L)\max\{|x - a_x|,|y - a_y|\}$, or
\item $\max\{|x - a_x|,|y - a_y|\} > (8p_0)\min\{|x - a_x|,|y - a_y|\}$.
\end{enumerate}
\end{Lem}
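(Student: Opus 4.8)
The plan is to produce the partition by a double-scale pigeonhole argument. First I would associate to each $x \in E$ its \emph{level} $\ell(x) := \lfloor \log_{8p_0}|x-a_x| \rfloor \in \Z$, so that any two points $x,y$ with $\ell(x) \ne \ell(y)$ automatically satisfy (F2). Thus it suffices to partition each fixed-level set $E^{(k)} := \{x \in E : \ell(x)=k\}$ into boundedly many pieces on which any two points satisfy (F1), and then interleave these partitions across levels in a way that keeps the total number of pieces bounded. For the interleaving, note that on $E^{(k)}$ all quantities $|x-a_x|$ are comparable to $t_k := (8p_0)^k$ up to a factor $8p_0$; two points from different levels $k < k'$ with $k' \ge k+1$ already satisfy (F2), so we only need to worry about collisions \emph{within} a level. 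Hence I can 3-color the levels by $k \bmod 3$ (or just handle all levels at once using that (F2) holds between levels) and reduce to: bound the number of classes needed to split a single $E^{(k)}$.

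The heart of the matter, then, is the single-level statement: there is $m \in \N$ depending only on $L, C_1, Q$ (and $p_0$, which itself depends only on these) so that $E^{(k)}$ can be split into $m$ pieces, on each of which any two distinct $x,y$ satisfy $|x-y| > 12L \max\{|x-a_x|,|y-a_y|\}$. Since on $E^{(k)}$ we have $|x-a_x| \simeq t_k$, this is equivalent to asking: the points of $E^{(k)}$, rescaled by $t_k$, form a set in $\R$ whose "$12L$-relatively-far" graph has bounded chromatic number. Equivalently, I want to show that each ball $B(x, 12L\cdot 8p_0\, t_k)$ contains at most $m$ points of $E^{(k)}$ — then a greedy coloring gives the partition into $m$ classes. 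To count these, I would use the porosity of $f(A)$ together with the bi-Lipschitz property of $f$ and Ahlfors regularity: each $x \in E^{(k)}$ has a closest point $a_x \in A$ with $|x-a_x| \simeq t_k$, and the images $f(a_x)$ lie in a bounded region of $X$; a volume/packing estimate (using that $f$ is $L$-bi-Lipschitz and $X$ is $(C_1,Q)$-regular, hence $Q$-homogeneous) bounds how many well-separated such points can occur. The key point producing separation is exactly that for $x,y \in E^{(k)}$ lying close together, $a_x$ and $a_y$ cannot coincide or be too close — because the Whitney condition $\diam \cQ_i \simeq \dist(\cQ_i,A)$ from Lemma \ref{Thm:Decomp} forces endpoints at comparable distance from $A$ to be genuinely spread out along $\R$.

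I expect the main obstacle to be this counting/separation step: one must carefully extract, from the Whitney geometry plus porosity plus bi-Lipschitz invariance, a clean upper bound on the number of points of $E^{(k)}$ inside a fixed multiple of $t_k$, with the bound depending only on the allowed data. A subtlety is that distinct Whitney endpoints can be very close in $\R$ yet belong to intervals of wildly different size if they are not adjacent; the hypothesis we are trying to enforce is precisely designed to rule such pairs into (F1) or (F2), so the argument must confirm that the only genuinely problematic configuration — many endpoints at mutually comparable distance from $A$ clustered in a short sub-interval — is impossible by the packing bound. Once the single-level bound $m$ is in hand, assembling the global partition into $n := 3m$ (or $n := m$, exploiting (F2) directly between levels) pieces is routine bookkeeping. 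Everything used — $p_0$ from Lemma \ref{lem:por}, the Whitney estimates from Lemma \ref{Thm:Decomp}, and $Q$-homogeneity of $X$ — depends only on $L$, $C_1$, and $Q$, so $n$ does too.
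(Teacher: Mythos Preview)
Your approach is correct and reaches the goal, but by a somewhat different route than the paper. The paper forgoes the level decomposition entirely: it directly forms the conflict graph on $E$ (declaring $x_j$ adjacent to $x_i$ when both (F1) and (F2) fail for the pair), bounds its maximum degree by the explicit constant $n = 192L(8p_0)^2$, and greedy-colors. Your two-stage scheme --- scale-decompose to handle (F2), then color within each level to force (F1) --- is a legitimate alternative that yields a comparable bound. One small slip: your claim that levels $k < k'$ with $k' \geq k+1$ automatically give (F2) is not right (adjacent levels need not, since the ratio $|y-a_y|/|x-a_x|$ can be arbitrarily close to $1$), so the $k \bmod 2$ or $k \bmod 3$ interleaving you mention is genuinely needed, not optional.

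On the counting step, the paper's argument --- and yours, once stripped down --- is purely one-dimensional and uses none of the porosity, bi-Lipschitz, or Ahlfors-regularity machinery you invoke. The point is simply that between any two distinct endpoints $x_j \neq x_k$ one of the Whitney intervals adjacent to $x_j$ or $x_k$ lies between them, and Lemma~\ref{Thm:Decomp} forces that interval to have diameter at least $\tfrac{1}{8}\min\{|x_j - a_{x_j}|, |x_k - a_{x_k}|\}$; combined with the diameter bound $|x_j - x_k| \leq 192 p_0 L\,|x_i - a_{x_i}|$ coming from the simultaneous failure of (F1) and (F2), this is a packing estimate in $\R$ alone. The constant $p_0$ enters only through the statement (via the thresholds in (F1) and (F2)), not through any geometric argument in $X$; you can safely drop the appeal to $f$, $X$, and regularity from your sketch.
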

\begin{proof}
Enumerate $E = \{x_1,x_2,\dots\}$, and for each $i \in \N$, define $V_i$ be the set of all indices $j \in \N$ such that
\[ |x_i - x_j| \le (12L)\max\{|x_i - a_{x_i}|,|x_j - a_{x_j}|\} \]
and
\[ (8p_0)^{-1}|x_i - a_{x_i}| \le |x_j - a_{x_j}| \le (8p_0)|x_i - a_{x_i}|. \]
Note that $i\in V_j$ if and only if $j\in V_i$.

We claim that there exists $n \in \N$ depending only on $L$, $C_1$, and $Q$ such that $\card(V_i) \le n$ for each $i \in \N$. To this end, fix $i \in \N$ and note that for any $j, k \in V_i$ with $j\neq k$,
\begin{equation}
\label{eq:X_idiam}
|x_j - x_k| \le (24L)\max\{|x_j - a_{x_j}|,|x_k - a_{x_k}|,|x_i - a_{x_i}|\} \le (192p_0L)|x_i - a_{x_i}|.  
\end{equation}
%Therefore, $\diam{X_i} \leq (96p_0L)|x_i - a_{x_i}|$. 

Moreover, let $j, k \in V_i$ with $j\neq k$, and let $\mathcal{Q}_{j_1}$ and $\mathcal{Q}_{j_2}$ be the two Whitney intervals which share the endpoint $x_j$ and $\mathcal{Q}_{k_1}$ and $\mathcal{Q}_{k_2}$ be the two Whitney intervals which share the endpoint $x_k$.
We have
\[ \dist(\mathcal{Q}_{j_1},A) \ge |x_j - a_{x_j}| - \diam{\mathcal{Q}_{j_1}} \ge |x_j - a_{x_j}| - \dist(\mathcal{Q}_{j_1},A), \]
so $|x_j - a_{x_j}| \le 2\dist(\mathcal{Q}_{j_1},A)$. By Lemma \ref{Thm:Decomp}(iii),
\[ \diam{\mathcal{Q}_{j_1}} \ge \tfrac14\dist(\mathcal{Q}_{j_1},A) \ge \tfrac18|x_j - a_{x_j}|, \]
and a similar estimate holds for $\mathcal{Q}_{j_2}$, $\mathcal{Q}_{k_1}$, and $\mathcal{Q}_{k_2}$. Since $x_j \ne x_k$, one of the intervals for which they are endpoints lies between them. That is,
\begin{align*}
|x_j - x_k| & \ge \min\{\diam{\mathcal{Q}_{j_1}},\diam{\mathcal{Q}_{j_2}},\diam{\mathcal{Q}_{k_1}},\diam{\mathcal{Q}_{k_2}}\} \\
& \ge \tfrac18\min\{|x_j - a_{x_j}|,|x_k - a_{x_k}|\} \\
& \ge (64p_0)^{-1}|x_i - a_{x_i}|.
\end{align*}
Combining this with \eqref{eq:X_idiam},
we conclude that
$\card(V_i) \leq 192L(8p_0)^2 =:n$.

Define now a map $\textbf{c}:\N \to \{1,\dots,n\}$ such that $\textbf{c}(1) = 1$, and for each $i \ge 2$, 
\[ \textbf{c}(i) := \min\{\ell \in \N  : \ell \neq \textbf{c}(k) \text{ for all } k \in V_i \cap \{ 1,\dots,i-1\}\}. \]
%\[ \textbf{c}(i) := \min \left( \{1,\dots,n\} \setminus \{\textbf{c}(k) : k \in V_i \cap \{ 1,\dots,i-1\}\} \right). \]
It is clear that if $i \in V_j$ and $i \ne j$ then $\textbf{c}(i) \ne \textbf{c}(j)$.
%since $\textbf{c}(j) > \textbf{c}(i)$ if $i < j$. 
For each $i \in \{1,\dots,n\}$, define $E_i := \{x_j : \textbf{c}(j)=i\}$.
Given $x_j,x_k \in E_i$, $\textbf{c}(i) = \textbf{c}(j)$, so $j \notin V_k$ (equivalently, $k \notin V_j$). Properties (F1) and (F2) follow.
\end{proof}

We now turn to the proof of Proposition \ref{Prop:Endpoint}.

\begin{proof}[{Proof of Proposition \ref{Prop:Endpoint}}]
Let $n\in\N$ and $E_1,\dots,E_n$ be the integer and partition, respectively, from Lemma \ref{Lem:Filter}. For each $k\in \{1,\dots,n\}$, define $E^{(k)} = E_1\cup\cdots\cup E_k$.

Let $i \in \{1,\dots,n\}$, $x \in E_i$, and $x' \in \partial B(f(a_x),|x - a_x|)$. By the porosity of $f(A)$ there exists a point $\tilde{x} \in X$ such that 
\[ B(\tilde{x},(2p_0)^{-1}|x - a_x|) \subset B\left(x',\tfrac12|x - a_x|\right) \setminus f(A).\] 
Then
\begin{equation}
\label{eq:sphere}
\tfrac12|x - a_x| \le d(\tilde{x},f(a_x)) \le \tfrac32|x - a_x|
\end{equation}
and
\begin{equation}
\label{eq:porous}
\dist(\tilde{x},f(A)) \ge (2p_0)^{-1}|x - a_x|.
\end{equation}

For any $i \in \{1,\dots,n\}$, and for any $x,y \in E_i$, we will show that
\begin{equation}\label{eq:eta}
d(\tilde{x},\tilde{y}) \geq (8p_0)^{-1}\left(|x - a_x| + |y - a_y|\right).
\end{equation}
Fix such $i$, $x$, and $y$, and assume without loss of generality that $|x - a_x| \ge |y - a_y|$.
If (F1) holds, then by \eqref{eq:sphere},
\begin{align*}
d(\tilde{x},\tilde{y}) & \ge d(f(a_x),f(a_y)) - d(f(a_x),\tilde{x}) - d(f(a_y),\tilde{y}) \\
& \ge L^{-1}|a_x - a_y| - \tfrac32(|x - a_x| +|y - a_y|) \\
& \ge L^{-1}|x - y| - 6|x - a_x| \\
& > 6|x - a_x| \\
& \ge 3(|x - a_x| + |y - a_y|).
\end{align*}
Assume now that (F2) holds and (F1) fails. By \eqref{eq:sphere} and \eqref{eq:porous},
\begin{align*}
d(\tilde{x},\tilde{y}) & \ge d(\tilde{x},f(a_y)) - d(f(a_y),\tilde{y}) \\
& \ge (2p_0)^{-1}|x - a_x| - \tfrac32|y - a_y| \\
%& > \frac{4p_0 - p_0 - \frac12 p_0}{8p_0^2}|x - a_x| \\
& \ge (8p_0)^{-1}(|x - a_x| + |y - a_y|).
\end{align*}

We define the map $\pi$ on each $E^{(k)}$ in an inductive manner. Define $\pi:E_1 \to X$ by $\pi(x) = \tilde{x}$. Properties (1)--(3) of the proposition for $E_1$ follow from \eqref{eq:sphere}, \eqref{eq:porous}, and \eqref{eq:eta} with $\xi_1=(8p_0)^{-1}$.

Assume now that for some $k\in\{1,\dots,n-1\}$ we have defined a constant $\xi_{k} \in (0,1)$ and a function $\pi: E^{(k)} \to X$ such that for all distinct $x,y \in E^{(k)}$,
\begin{equation}
\label{eq:sphere2}
\tfrac14|x - a_x| \le d(\pi(x),f(a_x)) \le 4|x - a_x|,
\end{equation}
\begin{equation}
\label{eq:porous2}
\dist(\pi(x),f(A)) \ge (4p_0)^{-1}|x - a_x|,
\end{equation}
and
\begin{equation}
\label{eq:apart2}
d(\pi(x),\pi(y)) \ge \xi_{k}(|x - a_x| + |y - a_y|).
\end{equation}

Fix $x \in E_{k + 1}$, and assume that there exist $y_1,\dots,y_{N} \in E^{(k)}$ such that
\begin{equation}
\label{eq:close}
d(\tilde{x},\pi(y_j)) < (8p_0)^{-1}(|x - a_x| + |y_j - a_{y_j}|)
\end{equation}
for each $j \in \{1,\dots,N\}$. First, for each such $j$, by \eqref{eq:sphere2}, \eqref{eq:porous}, and \eqref{eq:close},
\begin{align*}
|y_j - a_{y_j}| & \ge \tfrac14d(\pi(y_j),f(a_{y_j})) \\
& \ge \tfrac14 \left(d(\tilde{x},f(a_{y_j})) - d(\tilde{x},\pi(y_j))\right) \\
& \ge \tfrac14 \left( (2p_0)^{-1}|x - a_x| - (8p_0)^{-1}(|x - a_x| + |y_j - a_{y_j}|)\right).
\end{align*}
This gives
\begin{equation}
\label{eq:great}
|y_j - a_{y_j}| \ge (12p_0)^{-1}|x - a_x|.
\end{equation}
Next, for any $j,\ell \in \{1,\dots,N\}$, \eqref{eq:apart2} and \eqref{eq:great} yield
\begin{equation}
\label{eq:sep}
d(\pi(y_j),\pi(y_\ell)) \ge (12p_0)^{-1}\xi_{k}|x - a_x|,
\end{equation}
so $\{\pi(y_1),\dots,\pi(y_{N})\}$ is a $((12p_0)^{-1}\xi_{k}|x - a_x|)$-separated set. By \eqref{eq:sphere}, \eqref{eq:porous2}, and \eqref{eq:close}, for any $j \in \{1,\dots,N\}$,
\begin{align*}
|x - a_x| & \ge \tfrac23 d(\tilde{x},f(a_x)) \\
& \ge \tfrac23 \left( d(\pi(y_j),f(a_x)) - d(\tilde{x},\pi(y_j))\right) \\
& \ge (6p_0)^{-1}|y_j - a_{y_j}| - (12p_0)^{-1}(|x - a_x| + |y_j - a_{y_j}|).
\end{align*}
Therefore,
\begin{equation}
\label{eq:less}
|y_j - a_{y_j}| \le 24p_0|x - a_x|.
\end{equation}
By Ahlfors regularity, \eqref{eq:close}, \eqref{eq:less}, and \eqref{eq:sep}, we obtain $N \lesssim_{L,C_1,Q} \xi_k^{-Q}$.

By Lemma \ref{lem:por}, $\{\pi(y_1),\dots,\pi(y_{N})\}$ is $p_k$-porous for some $p_k \ge 1$ depending only on $C_1$, $Q$, $L$, and $k$. Hence, there exists a point $\pi(x) \in B(\tilde{x},(32p_0)^{-1}|x - a_x|)$ such that
\begin{equation}
\label{eq:new}
B(\pi(x),(32p_0p_k)^{-1}|x - a_x|) \subset X \setminus \{\pi(y_1),\dots,\pi(y_{N})\}.
\end{equation}

To complete the inductive step, we show that $\pi$ defined on $E^{(k+1)}$ satisfies properties (1)--(3) of the proposition for some appropriate $\xi_{k+1} \in (0,1)$.

For the first property, fix $x\in E_{k+1}$. By \eqref{eq:sphere}, 
\[ d(\pi(x),f(a_x)) \leq d(\pi(x),\tilde{x}) + d(f(a_x),\tilde{x})\leq 2|x-a_x|\]
and
\[ d(\pi(x),f(a_x)) \geq  d(f(a_x),\tilde{x}) - d(\pi(x),\tilde{x})  \geq \tfrac14|x-a_x|. \]

For the second property, fix $x\in E_{k+1}$. By \eqref{eq:porous}
\[ \dist(\pi(x),f(A)) \geq \dist(\tilde{x},f(A)) - d(\pi(x),\tilde{x}) \geq (4p_0)^{-1}|x-a_x|.\]

For the third property, fix distinct $x,y \in E^{(k+1)}$ and assume that $x\in E_{k+1}$. If $y \in E_{k+1}$, then by \eqref{eq:eta}
\begin{align*} 
d(\pi(x),\pi(y)) &\geq d(\tilde{x},\tilde{y}) - d(\pi(x),\tilde{x}) - d(\pi(y),\tilde{y})\geq (16p_0)^{-1}(|x-a_x| + |y-a_y|).
\end{align*}
Assume now that $y\in E^{(k)}$. If
\[ d(\tilde{x},\tilde{y}) \geq (8p_0)^{-1}\left( |x-a_x| + |y-a_y| \right),\]
then we work as in the preceding case. If 
\[ d(\tilde{x},\tilde{y}) < (8p_0)^{-1}\left( |x-a_x| + |y-a_y| \right),\]
then by \eqref{eq:less} and \eqref{eq:new},
\begin{align*}
d(\pi(x),\pi(y)) &\geq (32p_0p_k)^{-1}|x-a_x| \geq (2^93p_0^2p_k)^{-1}\left( |x-a_x| + |y-a_y| \right).
\end{align*}

After $n$ steps, we have defined $\xi:=\xi_n$ and the map $\pi:E \to X$ that satisfies properties (1)--(3) in the statement of the lemma.

To show that $\pi$ is bi-Lipschitz, fix distinct $x,y \in E$ and assume without loss of generality that $|x-a_x|\geq |y-a_y|$. By Lemma \ref{Thm:Decomp}(iii), $|x-a_x| \leq 4|x-y|$. %\leq |x-a_x|$. 
By property (3),
\begin{align*}
d(\pi(x),\pi(y)) &\leq d(\pi(x),f(a_x)) + d(f(a_x),f(a_y)) + d(\pi(y),f(a_y))\\
&\leq 4|x-a_x| + L|a_x-a_y| + 4|y-a_y|\\
&\leq 41L|x-y|.
\end{align*}
For the lower bound, suppose first that $|a_x-a_y|\geq 16L|x-a_x|$. Then, $|x-y| \leq 2|a_x-a_y|$
%\[ |x-y| \leq |x-a_x| + |y-a_y|+|a_x+a_y| \leq 2|a_x-a_y|\]
and by property (1)
%\begin{align*}
\[ d(\pi(x),\pi(y)) \geq d(f(a_x),f(a_y)) - d(\pi(x),f(a_x)) - d(\pi(y),f(a_y)) \geq (2L)^{-1}|a_x-a_y|.\]
%\end{align*}
Suppose now that $|a_x-a_y| \leq 16L|x-a_x|$. Then, $|x-y| \leq (2+16L)|x-a_x|$ and by property (3), $d(\pi(x),\pi(y)) \geq \xi |x-a_x|$.

For \eqref{eq:endpoint}, fix a Whitney interval $\mathcal{Q}_i = [x,y]$ and assume, without loss of generality, that $|x-a_x|\leq |y-a_y|$. There are two cases to consider. Assume first that $a_x=a_y$. By property (1),
\begin{align*}
d(\pi(x),\pi(y)) \leq 4|x-a_x| + 4|y-a_x| \leq 8|x-a_x| + 4\diam{\mathcal{Q}_i} \leq 36\diam{\mathcal{Q}_i}.
\end{align*}
Assume now that $a_x \neq a_y$. Then
\[ |y-a_y| \leq |y-a_x| \leq |x-y|+ |x-a_x| \leq 5\diam{\mathcal{Q}_i}\]
which yields that $
%\diam{\mathcal{Q}_i} \leq 
|a_x-a_y| \leq 11\diam{\mathcal{Q}_i}$. By property (1),
\begin{align*}
d(\pi(x),\pi(y)) &\leq 4|x-a_x| + d(f(a_x),f(a_y)) + 4|y-a_y|\\
&\leq d(f(a_x),f(a_y)) + 40\diam{\mathcal{Q}_i}\\
&\leq L|a_x-a_y| + 40\diam{\mathcal{Q}_i}\\
&\leq 51L\diam{\mathcal{Q}_i}.\qedhere
\end{align*}
\end{proof}

\subsection{The middle third of each Whitney interval}\label{sec:middlethird} 

The goal of this subsection is to extend $f$ to the union of the middle-thirds of all Whitney intervals $\{\mathcal{Q}_i\}_{i \in \N}$ in a bi-Lipschitz way. From here on, for each Whitney interval $\mathcal{Q}_i$, we denote by $\hat{\mathcal{Q}}_i$ the middle third interval of $\mathcal{Q}_i$. Recall the constants $\xi\in (0,1)$ and $\tilde L$ from Proposition \ref{Prop:Endpoint} depending only on $L$, $C_1$, and $Q$.

\begin{Prop}\label{Prop:middlethird}
There exists a constant $\hat{L} \geq 1$ depending only on $p$, $C$, $C_1$, $L$, and $Q$, and there exists an $\hat{L}$-bi-Lipschitz extension of $f$ 
\[ g: A \cup \bigcup_{i\in\N}\hat{\mathcal{Q}}_i \to X\]
such that for each $i\in\N$, if $\mathcal{Q}_i = [w,z]$, and $\hat{\mathcal{Q}}_i = [\hat{w},\hat{z}]$, then
\begin{enumerate}
\item $d(g(\hat{w}),\pi(w)) \leq (2^8\tilde L)^{-1}\xi\diam{\mathcal{Q}_i}$,
\item $d(g(\hat{z}),\pi(z)) \leq 
 (2^8\tilde L)^{-1}\xi\diam{\mathcal{Q}_i}$, and
\item $g(\hat{\mathcal{Q}}_i) \subset B\left(\pi(w),4R_i\right) \cap B\left(\pi(z),4R_i\right)$, where $R_i = d(\pi(w),\pi(z))$.
\end{enumerate}
\end{Prop}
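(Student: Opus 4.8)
The plan is to put $g=f$ on $A$ and to define $g$ on each middle-third interval $\hat{\mathcal Q}_i$, where $\mathcal Q_i=[w,z]$, as a bi-Lipschitz curve from a point near $\pi(w)$ to a point near $\pi(z)$, building these curves one at a time in order of nonincreasing $\diam\mathcal Q_i$. Since $p<Q-1$ we may fix an exponent $\alpha\in(1,Q-p)$. The fact that makes the whole construction run is that $f(A)$ is bi-Lipschitz equivalent to a subset of $\R$ and hence $\alpha$-homogeneous with a constant depending only on $L$, and that a bi-Lipschitz arc is $\alpha$-homogeneous with a constant depending only on its bi-Lipschitz constant (and not on its length); so any set obtained by adjoining to $f(A)$ a uniformly bounded number of arcs with a common bi-Lipschitz constant is again $\alpha$-homogeneous with a uniform constant, and Corollary~\ref{Cor:Path} continues to apply to it.

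First I would fix the $g$-images of the endpoints of the middle-third intervals. Each $x\in E$ is an endpoint of at most two Whitney intervals, and these have comparable diameters by Lemma~\ref{Thm:Decomp}; using that a $Q$-regular space has ample room near any point at any scale, pick for each such interval $\mathcal Q_i$ a point $g(\hat w)\in B(\pi(w),(2^8\tilde L\lambda)^{-1}\xi\diam\mathcal Q_i)$ so that the (at most two) points chosen at a common endpoint are at mutual distance comparable to $\xi\diam\mathcal Q_i$; each such point automatically lies at distance $\gtrsim\xi\diam\mathcal Q_i$ from $f(A)$, because $\dist(\pi(w),f(A))\ge\xi|w-a_w|\ge\xi\diam\mathcal Q_i$ by Proposition~\ref{Prop:Endpoint}(2) while the perturbation is an order of magnitude smaller, so conclusions (1) and (2) hold by construction. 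Now fix $\mathcal Q_i=[w,z]$ with all larger Whitney intervals already treated, let $\mathcal J_i$ be the collection of already-treated indices $j$ whose Whitney intervals are comparable in size to $\mathcal Q_i$ and lie within a fixed large multiple of $\diam\mathcal Q_i$ of $\mathcal Q_i$ (a collection of cardinality bounded independently of $i$, by the Whitney structure), and let $Y_i$ be the union of $f(A)$, the already-built curves $g(\hat{\mathcal Q}_j)$ for $j\in\mathcal J_i$, and the endpoint-points chosen above for the relevant intervals; by the remark in the first paragraph $Y_i$ is $\alpha$-homogeneous with a uniform constant. A short triangle-inequality argument—using $R_i:=d(\pi(w),\pi(z))$ together with the bounds $2\xi\diam\mathcal Q_i\le R_i\le 46L\diam\mathcal Q_i$ coming from Proposition~\ref{Prop:Endpoint} and \eqref{eq:endpoint}, the smallness of the perturbations, and $\dist(\pi(w),f(A))\gtrsim\diam\mathcal Q_i$—shows that $g(\hat w)$ and $g(\hat z)$ lie at distance $\gtrsim\diam\mathcal Q_i$ from $Y_i$. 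Apply Corollary~\ref{Cor:Path} with $Y=Y_i$ to obtain a path from $g(\hat w)$ to $g(\hat z)$ in $X\setminus Y_i$ that lies in $B(g(\hat w),2\lambda\,d(g(\hat w),g(\hat z)))$, has length $\lesssim\diam\mathcal Q_i$, and stays at distance $\gtrsim\diam\mathcal Q_i$ from $Y_i$; straighten this path by Lemma~\ref{Prop:Modi} (using a doubling constant of $X$ and a small $\varepsilon$), reparametrize it affinely onto $\hat{\mathcal Q}_i$, and take the result for $g|_{\hat{\mathcal Q}_i}$. For $\varepsilon$ small the straightened curve still stays $\gtrsim\diam\mathcal Q_i$ from $f(A)$ and from every earlier curve indexed by $\mathcal J_i$; tracking the radii gives conclusion (3); and the straightening estimates together with $\diam g(\hat{\mathcal Q}_i)\simeq\diam\hat{\mathcal Q}_i$ make $g|_{\hat{\mathcal Q}_i}$ bi-Lipschitz with a uniform constant.

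It then remains to check that $g$ is globally bi-Lipschitz. On $A$ it is $f$ and on each $\hat{\mathcal Q}_i$ it is uniformly bi-Lipschitz by the above; the only real work is the two-sided estimate of $d(g(s),g(t))$ when $s\in\hat{\mathcal Q}_i$ and $t\in A$ or $t\in\hat{\mathcal Q}_j$ with $j\neq i$. The upper bounds are routine triangle inequalities through $f(a_w)$ using (1)--(3) and $\diam\mathcal Q_i\le\dist(\mathcal Q_i,A)$. For the lower bound one distinguishes three regimes by the relative sizes and positions of the intervals: (a) very different sizes — then the image of the smaller interval is within $O(\diam)$ of $f(A)$ at its own scale while the image of the larger one is $\gtrsim\diam$ from $f(A)$ at its larger scale, so the two are separated; (b) comparable sizes and close intervals — then one of $j\in\mathcal J_i$, $i\in\mathcal J_j$ holds, so by construction $\dist(g(\hat{\mathcal Q}_i),g(\hat{\mathcal Q}_j))\gtrsim\diam\mathcal Q_i$; (c) otherwise the intervals are far apart and the nearest points $a_w,a_{w'}\in A$ satisfy $|a_w-a_{w'}|\gtrsim|s-t|$, so from $d(f(a_w),f(a_{w'}))\ge L^{-1}|a_w-a_{w'}|$ and the fact that $g(s),g(t)$ are within $O(\diam)$ of $f(a_w),f(a_{w'})$ we again obtain $d(g(s),g(t))\gtrsim|s-t|$. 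Collecting constants produces $\hat L$ depending only on $p,C,C_1,\lambda,L,Q$.

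I expect the main obstacle to be the construction step: arranging that the curves $g(\hat{\mathcal Q}_i)$ be quantitatively separated from one another—and in particular that their endpoints be, since two Whitney intervals may share an endpoint of $\mathcal Q$—while keeping each close to the prescribed $\pi$-value and far from $f(A)$. This is exactly what forces the greedy ``growing obstacle set'' $Y_i$ and the ordering by diameter, and it is where $p<Q-1$ is genuinely used: only then is $Y_i$—a uniformly bounded union of bi-Lipschitz arcs together with $f(A)$—of Assouad dimension strictly below $Q-p$ with a constant independent of $i$, so that Corollary~\ref{Cor:Path} furnishes the required short curves with uniform control. Care is also needed in the order in which the straightening parameter $\varepsilon$ and the constants coming out of Corollary~\ref{Cor:Path} are fixed (and, if necessary, in replacing the already-built curves in $Y_i$ by a suitably coarse fixed-scale discretization of themselves), so that the estimates in the construction step do not become circular.
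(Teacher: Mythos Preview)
Your proposal has the right ingredients---Corollary~\ref{Cor:Path} to produce curves avoiding a growing obstacle, Lemma~\ref{Prop:Modi} to straighten them, and a three-regime case analysis for the global bi-Lipschitz bound---but the greedy ordering by nonincreasing diameter leaves exactly the gap you flag in your last paragraph, and neither of your suggested patches closes it. To invoke Corollary~\ref{Cor:Path} with $Y=Y_i$ and get a curve at distance $\gtrsim\diam{\mathcal Q_i}$ from $Y_i$ \emph{with a uniform implicit constant}, you need the $\alpha$-homogeneity constant of $Y_i$ to be uniform; but the $(C,1)$-homogeneity constant of an $L'$-bi-Lipschitz arc is of order $(L')^2$, so the bound for the arcs $g(\hat{\mathcal Q}_j)$, $j\in\mathcal J_i$, depends on \emph{their} bi-Lipschitz constants, which were produced by Corollary~\ref{Cor:Path} applied to \emph{their} obstacle sets $Y_j$, and so on. Nothing in your ordering bounds the length of such a dependency chain: whenever many Whitney intervals of comparable diameter sit in a row, each within the prescribed multiple of its diameter from the next, the chain is as long as their number and the constants can blow up. Replacing the earlier curves by a coarse net in $Y_i$ does restore uniform homogeneity, but then the new curve is only guaranteed to avoid the net points, not the curves themselves, and regime~(b) of your lower bound collapses.

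The paper breaks the circularity with a finite filtration (Lemma~\ref{Lem:DS}): the Whitney intervals are partitioned into a bounded number $N$ of classes $\mathscr I_1,\dots,\mathscr I_N$ so that any two intervals in the same class are either far apart or of very different sizes. The map $g$ is then built in $N$ stages; at stage $k{+}1$ every new curve is asked to avoid only $f_k(A_k)$, which is the image of a subset of $\R$ under an $L_k$-bi-Lipschitz map and hence $1$-homogeneous with constant depending only on $L_k$. This yields a finite recursion $L_0\mapsto L_1\mapsto\cdots\mapsto L_N=\hat L$ with no circularity. Curves created in the same stage are never required to avoid one another; the dichotomy of Lemma~\ref{Lem:DS} guarantees their separation a posteriori---its two alternatives are precisely your regimes~(a) and~(c), while your regime~(b) becomes ``different stages'' and is handled by the construction. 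The missing idea, then, is to replace the unbounded greedy process by a bounded-depth induction.
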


%For our first lemma, 
Recall that, since $f$ is bi-Lipschitz, the set $f(A)$ is $1$-homogeneous in $X$.

\begin{Lem}\label{Lem:modulus}
There exist constants $\beta_0, \ell_0, \delta_0 > 0$ depending only on $p$, $C$, $C_1$, $L$, and $Q$ with the following property. Let $\mathcal{Q}_i = [w,z]$ be a Whitney interval and $\Gamma_i$ be the collection of curves $\gamma:[0,1] \to X$ such that
\begin{enumerate}
\item $\gamma([0,1]) \subset B(\pi(w),3R_i)\cap B(\pi(z),3R_i)$ where $R_i = d(\pi(w),\pi(z))$,
\item $\max\{ d(\gamma(0),\pi(w)), d(\gamma(1),\pi(z))\}<(2^8\tilde L)^{-1}\xi \diam{\mathcal{Q}_i}$,
\item $\len(\gamma) \le \ell_0 \diam{\mathcal{Q}_i}$,
\item $\dist(\gamma(t),f(A)) \ge \delta_0 \diam{\mathcal{Q}_i}$ for all $t\in[0,1]$.
\end{enumerate}
Then, 
\[\Mod_p(\Gamma_i) \geq \beta_0 (\diam{\mathcal{Q}_i})^{Q-p}.\]
\end{Lem}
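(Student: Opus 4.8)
The plan is to produce a single good curve from the family $\Gamma_i$ and lower-bound its modulus by subtracting off bad subfamilies, exactly in the style of Corollary~\ref{Cor:Path} but now with three obstructions to avoid: being long, getting close to $f(A)$, and (the new ingredient) leaving the lens-shaped region $B(\pi(w),3\lambda R_i)\cap B(\pi(z),3\lambda R_i)$ or failing to land near $\pi(w)$ and $\pi(z)$. First I would record the scale relations. Writing $d_i := \diam{\mathcal{Q}_i}$ and $R_i := d(\pi(w),\pi(z))$, Proposition~\ref{Prop:Endpoint} gives $R_i \simeq_{L} d_i$ (the upper bound is \eqref{eq:endpoint}; for the lower bound note $|w-a_w|+|z-a_z|\gtrsim d_i$ by Lemma~\ref{Thm:Decomp} since $\mathcal{Q}_i=[w,z]$, so property~(3) of Proposition~\ref{Prop:Endpoint} gives $R_i\gtrsim \xi d_i$). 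So all the balls in the statement have radii comparable to $d_i$, and the two small target balls around $\pi(w),\pi(z)$ have radii comparable to $d_i$ as well with a definite (tiny) constant.

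Next I would set up the capacity estimate. Let $r := c_0 (2^8\tilde L\lambda)^{-1}\xi d_i$ for a small absolute $c_0$, so that $\overline{B}(\pi(w),r)$ and $\overline{B}(\pi(z),r)$ are disjoint compact sets inside the lens $\Omega := B(\pi(w),2\lambda R_i)\cap B(\pi(z),2\lambda R_i)$, and are moreover disjoint from $f(A)$ up to distance $\gtrsim r$ by property~(2) of Proposition~\ref{Prop:Endpoint}. The first subtlety is that Lemma~\ref{Lem:Path1} is stated for paths in a single ball $B(x,2\lambda d(x,y))$, not in a lens; but $\Omega$ is a domain in the geodesic space $X$ that is quantitatively ``flat'' at scale $d_i$ (it contains $B(\pi(w),cR_i)$ for a definite $c$), so I would rerun the Poincaré-inequality computation of Lemma~\ref{Lem:Path1} on $\Omega$: apply the Poincaré inequality on a ball $B\subset\Omega$ of radius $\simeq d_i$ containing both $\overline{B}(\pi(w),r)$ and $\overline{B}(\pi(z),r)$, then invoke Lemma~\ref{Lem:Cap} with this $\Omega$ to get
\[ \Mod_p(\Gamma^\Omega_{w,z}) \gtrsim d_i^{Q-p}\Big(\tfrac{d_i}{r}\Big)^{-Qp} \simeq d_i^{Q-p}, \]
where $\Gamma^\Omega_{w,z}$ is the family of curves in $\Omega$ joining $\overline{B}(\pi(w),r)$ to $\overline{B}(\pi(z),r)$, and the final comparison uses $r\simeq d_i$ with the constant absorbed.

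Then I would subtract the bad curves. By Lemma~\ref{Lem:Path2} applied with $R\simeq d_i$, the family of curves in $\Omega$ of length $\ge \ell_0 d_i$ has $p$-modulus $\lesssim \ell_0^{-p} d_i^{Q-p}$; choosing $\ell_0$ large (depending only on the stated data) makes this at most a third of the lower bound. By Lemma~\ref{Lem:Path3} applied to the $1$-homogeneous set $Y=f(A)$ with $R\simeq d_i$ and $\delta = \delta_0$, the family of curves in $\Omega$ that come within $\delta_0 d_i$ of $f(A)$ and have length $\ge 2\delta_0 d_i$ has modulus $\lesssim \delta_0^{Q-p-1} d_i^{Q-p}$, which since $p<Q-1$ tends to $0$ as $\delta_0\to0$, so choose $\delta_0$ small to kill another third. (Curves of length $<2\delta_0 d_i$ starting in $\overline{B}(\pi(w),r)$ are automatically far from $f(A)$ once $2\delta_0 d_i < \xi|w-a_w| - r$, which holds for $\delta_0$ small; handle this tail trivially.) Since $\Mod_p$ is an outer measure, the remaining subfamily
\[ \Gamma^\Omega_{w,z}\setminus(\text{long}\cup\text{near }f(A)) \]
has $p$-modulus $\ge \beta_0 d_i^{Q-p}$ for a definite $\beta_0>0$, and every curve in it satisfies conditions (1)--(4) of $\Gamma_i$ (condition (1) because $\Omega\subset B(\pi(w),3\lambda R_i)\cap B(\pi(z),3\lambda R_i)$; condition (2) because the endpoints lie in $\overline{B}(\cdot,r)$ and $r<(2^8\tilde L\lambda)^{-1}\xi d_i$; conditions (3)--(4) by the subtraction). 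Hence $\Gamma^\Omega_{w,z}\setminus(\cdots)\subset\Gamma_i$ and monotonicity of modulus gives $\Mod_p(\Gamma_i)\ge\beta_0 d_i^{Q-p}$.

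The main obstacle I anticipate is the first subtlety above: justifying the lower capacity bound on the lens-shaped domain $\Omega$ rather than on a single ball, i.e. making sure the Poincaré-inequality argument of Lemma~\ref{Lem:Path1} still runs when the ``ambient'' region is $B(\pi(w),2\lambda R_i)\cap B(\pi(z),2\lambda R_i)$. This is not serious — one simply applies the $p$-Poincaré inequality to a single ball $B$ of radius comparable to $d_i$ that is contained in $\Omega$ and contains both small balls (such a ball exists since $R_i\simeq d_i$ and $X$ is geodesic, hence connected at this scale), exactly as in the proof of Lemma~\ref{Lem:Path1}, and then invokes Lemma~\ref{Lem:Cap} with domain $\Omega$ — but it is the one place where the geometry of the two target balls (as opposed to a generic $x,y$ pair) must be used, and where one must be careful that the constant $(2^8\tilde L\lambda)^{-1}\xi$ in condition (2) really is small enough to make the chosen radius $r$ fit inside $\Omega$ and away from $f(A)$.
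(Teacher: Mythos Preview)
Your approach is correct and matches the paper's: lower-bound the modulus of curves joining the two small balls via Lemma~\ref{Lem:Path1}, then subtract long curves (Lemma~\ref{Lem:Path2}) and curves near $f(A)$ (Lemma~\ref{Lem:Path3}). The ``main obstacle'' you anticipate is not actually an obstacle: since $\lambda\ge 1$, the single ball $B(\pi(w),2\lambda R_i)$ is already contained in the lens $B(\pi(w),3\lambda R_i)\cap B(\pi(z),3\lambda R_i)$ (any point within $2\lambda R_i$ of $\pi(w)$ is within $2\lambda R_i + R_i \le 3\lambda R_i$ of $\pi(z)$), so Lemma~\ref{Lem:Path1} applies verbatim with $x=\pi(w)$, $y=\pi(z)$ and there is no need to rerun the Poincar\'e argument on a lens-shaped domain.
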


\begin{proof}
%Let $\xi$ be as in Proposition \ref{Prop:Endpoint}. 
Since $B(\pi(w),2R_i) \subset B(\pi(w),3R_i)\cap B(\pi(z),3R_i)$,
we may apply Lemma \ref{Lem:Path1}, Proposition \ref{Prop:Endpoint}(3), and \eqref{eq:endpoint} to conclude that the family $\Gamma_i^{(1)}$ of curves 
\[ \gamma:[0,1] \to B(\pi(w),3R_i)\cap B(\pi(z),3R_i)\] 
such that $\gamma(0)$ lies in the closed ball $\overline{B}(\pi(w),(2^8\tilde L)^{-1}\xi \diam{\mathcal{Q}_i})$ and $\gamma(1)$ lies in the closed ball $\overline{B}(\pi(z),(2^8\tilde L)^{-1}\xi \diam{\mathcal{Q}_i})$ has $p$-modulus
\[ \Mod_p(\Gamma_i^{(1)}) \ge \alpha(\diam{\mathcal{Q}_i})^{Q - p} \]
where $\alpha > 0$ is some constant depending only on $p$, $C$, $C_1$, $Q$, and $L$. 

By Lemma \ref{Lem:Path2}, there exists $\ell_0>0$ depending only on $p$, $C$, $C_1$, $Q$, and $L$ such that the subfamily
\[ \Gamma_i^{(2)} := \left\{\gamma \in \Gamma_i^{(1)} : \len(\gamma) \le \ell_0 \diam{\mathcal{Q}_i}\right\} \]
satisfies
\begin{align*}
\Mod_p(\Gamma_i^{(2)}) \ge 
%\Mod_p(\Gamma_i) - C_1l^{-p}(2d(\pi(w),\pi(z)))^{Q - p}  = 
\tfrac12 \alpha (\diam{\mathcal{Q}_i})^{Q - p}.
\end{align*}
By Lemma \ref{Lem:Path3}, there exists $\delta_0>0$ depending only on $Q$, $p$, $C$, $C_1$, and $L$ such that
the subfamily
\[ \Gamma_i := \left\{\gamma \in \Gamma_i^{(2)} : \dist(\gamma(t),f(A)) \ge \delta_0 \diam{\mathcal{Q}_i} \text{ for each } t \in [0,1]\right\}, \]
%for any $\delta > 0$, by Lemma \ref{Lem:Path3}, the subfamily
%\[ \Gamma_i^{(2)} := \left\{\gamma \in \Gamma_i^{(1)} : \dist(\gamma(t),f(A_0)) \ge 2\delta d(\pi(w),\pi(z)) \text{ for each } t \in [0,1]\right\}, \]
%has
%\[ \Mod_p(\Gamma_i^{(2)}) \ge \Mod_p(\Gamma_i^{(1)}) - \eta\delta^{Q - p - 1}(2 d(\pi(w),\pi(z)))^{Q - p}, \]
%where $\eta > 0$ is a constant depending only on $Q$, $C_1$, $L$, and the homogeneity constant of $f(A_0)$.
%By choosing
%\[ \delta_0 := \left(2^{4Qp-5Q+5p-2}\eta^{-1} \xi^{Q-p-Qp}L^{Qp-Q+p}\alpha \right)^{1/(Q - p)}, \]
satisfies 
\[ \Mod_p(\Gamma_i) \ge \tfrac14 \alpha (\diam{\mathcal{Q}_i})^{Q - p}. \qedhere\] 
\end{proof}

We now need a filtration of the Whitney decomposition, in the vein of the following result of David and Semmes. The proof of the lemma is almost identical to that of Lemma \ref{Lem:Filter} and is left to the reader. 

\begin{Lem}[{\cite[Proposition 17.4]{David:1991}}]
\label{Lem:DS}
There exists an integer $N$ depending only on $L$, $C_1$, and $Q$, and there exists a partition of $\N$ into sets $\{\mathscr{I}_1,\dots,\mathscr{I}_N\}$ such that for any $k \in \{1,\dots,N\}$ and for any $i,j \in \mathscr{I}_k$, either
\begin{enumerate}[(i)]
\item $\dist(\mathcal{Q}_i,\mathcal{Q}_j) > 800L^2\max\{\diam{\mathcal{Q}_i},\diam{\mathcal{Q}_j}\}$, or
\item $\max\{\diam{\mathcal{Q}_i} , \diam{\mathcal{Q}_j}\} > 800L\delta_0^{-1}\min\{\diam{\mathcal{Q}_i} , \diam{\mathcal{Q}_j}\}$.
\end{enumerate}
\end{Lem}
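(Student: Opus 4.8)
The proof follows the scheme of Lemma~\ref{Lem:Filter} essentially verbatim, with the endpoints of the Whitney intervals replaced by the intervals themselves. First I would enumerate the Whitney intervals $\mathcal{Q}_1,\mathcal{Q}_2,\dots$ and, for each $i\in\N$, let $V_i$ be the set of indices $j$ for which \emph{both} (i) and (ii) fail, i.e.
\[ \dist(\mathcal{Q}_i,\mathcal{Q}_j) \le 800L^2\lambda\max\{\diam{\mathcal{Q}_i},\diam{\mathcal{Q}_j}\} \]
and
\[ \max\{\diam{\mathcal{Q}_i},\diam{\mathcal{Q}_j}\} \le 800\lambda L\delta_0^{-1}\min\{\diam{\mathcal{Q}_i},\diam{\mathcal{Q}_j}\}. \]
Both conditions are symmetric in $i$ and $j$, so $j\in V_i$ if and only if $i\in V_j$, and $i\in V_i$.

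The heart of the argument is to bound $\card(V_i)$ by a constant $N$ depending only on the constants appearing in (i) and (ii) --- i.e.\ on $L$, $\lambda$, and $\delta_0$, and hence (via Lemma~\ref{Lem:modulus}) only on $p$, $C$, $C_1$, $\lambda$, $L$, and $Q$. Fix $i$. For $j\in V_i$, the failure of (ii) yields $\diam{\mathcal{Q}_j} \le 800\lambda L\delta_0^{-1}\diam{\mathcal{Q}_i}$, whence the failure of (i) gives $\dist(\mathcal{Q}_i,\mathcal{Q}_j) \lesssim_{L,\lambda,\delta_0}\diam{\mathcal{Q}_i}$; consequently every $\mathcal{Q}_j$ with $j\in V_i$ lies inside a single interval $J\subset\R$ with $\diam{J}\lesssim_{L,\lambda,\delta_0}\diam{\mathcal{Q}_i}$. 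On the other hand, failure of (ii) also gives $\diam{\mathcal{Q}_j}\gtrsim_{L,\lambda,\delta_0}\diam{\mathcal{Q}_i}$ for all $j\in V_i$, and distinct Whitney intervals have disjoint interiors by Lemma~\ref{Thm:Decomp}(ii); summing lengths gives $\sum_{j\in V_i}\diam{\mathcal{Q}_j}\le\diam{J}$, and therefore $\card(V_i)\lesssim_{L,\lambda,\delta_0}1=:N$. (This is the precise analogue of the bound on $\card(V_i)$ in the proof of Lemma~\ref{Lem:Filter}; one may equally well run the separated-set argument used there, noting that between two distinct Whitney intervals counted in $V_i$ there lies a third whose length is $\gtrsim\diam{\mathcal{Q}_i}$ by Lemma~\ref{Thm:Decomp}(iii), so they are $\gtrsim\diam{\mathcal{Q}_i}$-separated, cf.\ \eqref{eq:X_idiam} and the lines that follow it.)

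With $\card(V_i)\le N$ established, I would color greedily exactly as in Lemma~\ref{Lem:Filter}: set $\textbf{c}(1)=1$ and, for $i\ge 2$,
\[ \textbf{c}(i) := \min\{\ell\in\N : \ell\neq\textbf{c}(k)\text{ for all }k\in V_i\cap\{1,\dots,i-1\}\}, \]
so that $\textbf{c}(i)\in\{1,\dots,N\}$ since $\card(V_i\cap\{1,\dots,i-1\})\le N-1$. Put $\mathscr{I}_k:=\{i\in\N:\textbf{c}(i)=k\}$; these partition $\N$ into $N$ sets. If $i\neq j$ lie in the same $\mathscr{I}_k$, say $i<j$, then $\textbf{c}(j)=\textbf{c}(i)$ forces $i\notin V_j$, which by the definition of $V_j$ means that at least one of (i), (ii) holds for the pair $\mathcal{Q}_i,\mathcal{Q}_j$ --- exactly the assertion. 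The main obstacle is the packing bound on $\card(V_i)$: one must check that it depends \emph{only} on the ratio constants $800L^2\lambda$ and $800\lambda L\delta_0^{-1}$ (together with the universal constants of Lemma~\ref{Thm:Decomp}), so that $N$ genuinely depends only on the data named in the statement; beyond that the argument is bookkeeping carried over from Lemma~\ref{Lem:Filter}.
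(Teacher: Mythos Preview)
Your proposal is correct and follows precisely the route the paper intends: the paper does not give an explicit proof but says it is ``almost identical to that of Lemma~\ref{Lem:Filter} and is left to the reader,'' and your argument is exactly that adaptation --- define $V_i$ as the set of indices where both alternatives fail, bound $\card(V_i)$ by a packing argument using that the Whitney intervals in $V_i$ have comparable diameters and sit in a bounded interval, then color greedily. Your observation that $N$ in fact depends on $\lambda$ and $\delta_0$ (hence on $p$, $C$, $C_1$, $\lambda$, $L$, $Q$ via Lemma~\ref{Lem:modulus}) rather than only on $L$, $C_1$, $Q$ as stated is correct; this is a minor imprecision in the paper's statement, not a flaw in your argument.
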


We are now ready to prove Proposition \ref{Prop:middlethird}.

\begin{proof}[{Proof of Proposition \ref{Prop:middlethird}}]
The construction is in an inductive fashion. Let $N$ and $\mathscr{I}_1,\dots,\mathscr{I}_N$ be the integer and sets of indices from Lemma \ref{Lem:DS}. Denote $A_0 := A$ and for each $k\in \{1,\dots,N\}$ denote
\[ A_k := A_0 \cup \bigcup_{j = 1}^{k} \bigcup_{i \in \mathscr{I}_j} \hat{\mathcal{Q}}_i. \]

For each $k\in \{0,\dots,N\}$, we find some $L_k \geq 1$ depending only on $p$, $C$, $C_1$, $L$, $Q$, and $k$, and we find an $L_k$-bi-Lipschitz embedding $f_k :A_k \to X$ such that for all $k\in\{1,\dots,N\}$, $f_k|_{A_{k-1}} = f_{k-1}$ and such that, if $i\in \mathscr{I}_k$, $\mathcal{Q}_i = [w,z]$, and $\hat{\mathcal{Q}}_i = [\hat{w},\hat{z}]$, then
\begin{enumerate}[(a)]
\item $d(f_k(\hat{w}),\pi(w)) \leq (2^8\tilde L)^{-1}\xi\diam{\mathcal{Q}_i}$, 
\item $d(f_k(\hat{z}),\pi(z)) \leq (2^8\tilde L)^{-1}\xi\diam{\mathcal{Q}_i}$,
\item $f_{k}(\hat{\mathcal{Q}}_i) \subset B\left(\pi(w),4R_i\right) \cap B\left(\pi(z),4R_i\right)$ where $R_i = d(\pi(w),\pi(z))$.
\end{enumerate}
The map $g$ of Proposition \ref{Prop:middlethird} will then be the map $f_N$.

For $k=0$, set $L_0 = L$ and $f_0 = f$. Properties (a)--(c) are vacuous.  

Assume now that for some $k \in \{0,\dots,N-1\}$, there exists a constant $L_k$ and an $L_k$-bi-Lipschitz map $f_k:A_k \to X$ satisfying (a)--(c).

Fix $i \in \mathscr{I}_{k + 1}$ and write $\mathcal{Q}_i=[w,z]$ and $\hat{\mathcal{Q}}_i=[\hat{w},\hat{z}]$. Recall the family of curves $\Gamma_i$ from Lemma \ref{Lem:modulus}. By Lemma \ref{Lem:Path3}, there exists $\delta_{k+1}\in (0,\xi)$ depending only on $Q$, $p$, $C$, $C_1$, $L$, and $k$ (in particular on the homogeneity constant of $f(A_k)$) such that the subfamily
\[ \Gamma_{k,i}' := \left\{\gamma \in \Gamma_i: \dist(\gamma(t),f_k(A_k)) \ge \delta_{k+1} \diam{\mathcal{Q}_i} \text{ for each } t \in [0,1]\right\}, \]
satisfies 
\[ \Mod_p(\Gamma_{k,i}') \ge \tfrac12\beta_0 (\diam{\mathcal{Q}_i})^{Q - p} >0.\] 
In particular, $\Gamma_{k,i}'$ is nonempty, so we can pick a curve $\sigma_i \in \Gamma_{k,i}'$. Applying Lemma~\ref{Prop:Modi} to $\sigma_i$ with a suitable reparameterization,
%with $\e = (88L)^{-1}\delta_{k+1}\xi$ to obtain 
we find a constant $L_{k+1}'$ depending only on $Q$, $p$, $C$, $C_1$, $L$, and $k$, and we find an $L_{k+1}'$-bi-Lipschitz curve $\gamma_i : \hat{\mathcal{Q}}_i \to X$ such that $\gamma_i(\hat{w}) = \sigma_i(0)$, $\gamma_i(\hat{z}) = \sigma_i(1)$, and 
inductive hypothesis (c) for $f_k$ gives
\begin{align}\label{eq:excess}
\gamma_i(\hat{\mathcal{Q}}_i) &\subset B\left(\sigma_i([0,1]),  \tfrac12\delta_{k+1} \diam{\mathcal{Q}_i}\right)\\ 
&\subset B(\pi(w),3R_i + \tfrac12\delta_{k+1}\diam{\mathcal{Q}_i})\cap B(\pi(z),3R_i + \tfrac12\delta_{k+1}\diam{\mathcal{Q}_i})\notag\\
& \subset B(\pi(w),4R_i) \cap B(\pi(z),4R_i)\notag. %\\
%&\subset B(\pi(w),93L\diam{\mathcal{Q}_i})\cap B(\pi(z),93L\diam{\mathcal{Q}_i}).\notag
\end{align}
In particular, we have that
$
\dist(\gamma_i(\hat{\cQ}_i),f_k(A_k)) \geq \tfrac12\delta_{k+1} \diam{\mathcal{Q}_i}
$.

Define now $f_{k+1} : A_{k+1} \to X$ by setting $f_{k+1}|A_k = f_k$ and $f_{k+1}|\hat{\mathcal{Q}}_i = \gamma_{i}$ for each $i\in \mathscr{I}_{k+1}$. By \eqref{eq:endpoint} we have for all $i\in\mathscr{I}_{k+1}$
\begin{equation}\label{eq:diameter}
%f_{k+1}(\mathcal{Q}_i) \subset B(\pi(w),4)
\diam{f_{k+1}(\mathcal{\hat Q}_i)} \leq 
9R_i
\leq 414 L \diam(\cQ_i).
%4d(f(a_w),f(a_z)) + 144\diam{\mathcal{Q}_i} \leq 184L\diam{\mathcal{Q}_i}.
\end{equation}

Clearly, $f_{k+1}|A_k = f_k$. Properties (a)--(c) are clear from the design of $f_{k + 1}$ and Lemma \ref{Lem:modulus}. To complete the inductive step, we claim that $f_{k+1}$ is $L_{k + 1}$-bi-Lipschitz for some $L_{k + 1}\geq 1$ depending only on $Q$, $p$, $C$, $C_1$, $L$, and $k$. Fix $x,y \in A_{k+1}$.

Firstly, if $x,y\in A_k$, then the claim follows by the fact that $f_{k+1}|A_k = f_k$ and the inductive hypothesis that $f_k$ is $L_k$-bi-Lipschitz. 
%Secondly, $f_{k+1}|\hat{Q}_i$ is $L_{k+1}'$-bi-Lipschitz for all $i\in \mathscr{I}_{k+1}$. 

Secondly, assume that $x \in \hat{\mathcal{Q}}_i$ for some $i\in\mathscr{I}_{k+1}$ and $y \in A$. Let $w$ be the endpoint of $\mathcal{Q}_i$ closest to $A$, let $\hat{w}$ be the endpoint of $\hat{\mathcal{Q}}_i$ between $x$ and $w$, and note that $|w-x| \leq |x-a_w| \leq |x-y|$. By \eqref{eq:diameter}, Proposition \ref{Prop:Endpoint}(1), the fact $\diam{\mathcal{Q}_i} \leq |w-a_w|$, and properties (a), (b) for $f_{k+1}$.
\begin{align*}
d(&f_{k+1}(x),f_{k+1}(y)) \\
&\leq d(f_{k+1}(x),f_{k+1}(\hat{w})) + d(f_{k+1}(\hat{w}),\pi(w)) + d(\pi(w),f(a_w)) + d(f(a_w),f(y))\\
&\leq (414L + 5)|w-a_w| +  L|a_w-y|\\
&\leq (414L + 5)|x-a_w| + L|a_w-y|\\
&\leq (416L + 5)|x-y|.
\end{align*}
For the lower bound, we have by Lemma \ref{Lem:modulus}(4) and the design of $\gamma_i$
\begin{align*}
d(f_{k+1}(x),f_{k+1}(y))
\geq
\dist(f_{k+1}(x),f(A)) \geq \tfrac12\delta_0\diam\cQ_i 
&\geq \tfrac18\delta_0|w-a_w|,
%\geq \tfrac{1}{16}\delta_0 |x-a_w|.
\end{align*}
%\[ \diam\cQ_i \le |w - a_w| \le |x-a_w| \le 2|w - a_w| \le 8\diam{\mathcal{Q}_i} \] 
 %and $|x-a_w| \leq |x-y|$,
%\[ |x-y| \leq |y-a_w| + |x-a_w| \lesssim |x-y| + |x-a_w| \leq 3|x-y|\]
%which give 
%\begin{align*}
%\diam\cQ_i & \le |w - a_w| \le |x - a_w| \\
%& \le |x-y| \le |y-a_w| + |x-a_w| \le |y-a_w| + 2|w-a_w| \\
%& \le |y-a_w| + 8\diam{\mathcal{Q}_i}.
%\end{align*}
and, by \eqref{eq:diameter}, property (c) for $f_{k+1}$, and Proposition \ref{Prop:Endpoint}(2)
\begin{align*}
d(f_{k+1}(x),f(a_w)) &\leq d(f_{k+1}(x),f_{k+1}(\hat w)) + d(f_{k+1}(\hat w),\pi(w)) + d(f(a_w),\pi(w))\\
& \le 414L\diam\cQ_i + (2^8\tilde L)^{-1}\xi\diam\cQ_i + 4|w - a_w| \\
& \le 419L|w - a_w|.
\end{align*}
Therefore, since $|x-a_w| \le 2|w - a_w|$,
%\begin{align*}
%\tfrac1{16}\delta_0L^{-1}|x - y| & \le \tfrac{1}{16}\delta_0|x-a_w| + L^{-1}|a_w-y| \\
%&\leq \dist(f_{k+1}(x),f(A)) + d(f(a_w),f(y))\\
%&\leq 2d(f_{k+1}(x),f(a_w)) + d(f_{k+1}(x),f(y))\\
%& \le 3041\delta_0^{-1}d(f_{k + 1}(x),f_{k + 1}(y))
%\end{align*}
\begin{align*}
|x - y| &\le |x-a_w| + |a_w-y|\\
&\leq 2|w-a_w| + L[d(f(a_w),f_{k+1}(x)) + d(f_{k+1}(x),f(y))]\\
&\leq 419 L^2 |w-a_w| + Ld(f_{k+1}(x),f(y))\\
&\leq 3352 L^2\delta_0^{-1}d(f_{k+1}(x),f_{k+1}(y))
\end{align*}

Thirdly, assume that $x \in \hat{\mathcal{Q}}_i$ and $y \in \hat{\mathcal{Q}}_j$, for some $i,j \in \mathscr{I}_1 \cup \cdots \cup \mathscr{I}_{k + 1}$. Assume that $\diam{\mathcal{Q}_i} \geq \diam{\mathcal{Q}_j}$. For the upper bound, note that 
%even if $Q_i$ and $Q_j$ are adjacent, $\dist(\hat{Q}_i,\hat{Q}_j) \ge \diam(\hat{Q}_i) + \diam(\hat{Q}_j)$, so
\begin{equation}
\label{Eq:T_iT_j}
|x - y| \ge \dist(\hat{\mathcal{Q}}_i,\hat{\mathcal{Q}}_j) \ge \diam{\hat{\mathcal{Q}}_i} + \diam{\hat{\mathcal{Q}}_j} = \tfrac13 \left( \diam{\mathcal{Q}_i} + \diam{\mathcal{Q}_j} \right).
\end{equation}
Let $a_i$ be the closest point of $A$ to $\mathcal{Q}_i$, let $a_j$ be the closest point of $A$ to $\mathcal{Q}_j$, let $e_i$ be the endpoint of $\mathcal{Q}_i$ that lies between $x$ and $a_i$, and let $e_j$ be the endpoint of $\mathcal{Q}_j$ that lies between $y$ and $a_j$. By Proposition \ref{Prop:Endpoint}(1), \eqref{eq:excess}, \eqref{eq:endpoint}, Lemma \ref{Thm:Decomp}(iii), and \eqref{Eq:T_iT_j},
\begin{align*}
d(f_{k+1}(x),f_{k+1}(y)) & \le d(f_{k+1}(x),\pi(e_i)) + d(\pi(e_i),f(a_i)) + d(f(a_i),f(a_j)) \\
& \qquad + d(f(a_j),\pi(e_j)) + d(\pi(e_j),f_{k+1}(y)) \\
&\leq (16 + 184L)(\diam{\mathcal{Q}_i} + \diam{\mathcal{Q}_j}) + L|a_i-a_j|\\
&\leq (16 + 189L)(\diam{\mathcal{Q}_i} + \diam{\mathcal{Q}_j}) + L|x-y|\\
& \le 616L|x - y|
\end{align*}
since $|x-a_i| \le \diam{\mathcal{Q}_i} + |e_i-a_i| \leq 5\diam{\mathcal{Q}_i}$
and, similarly, $|y-a_j| \le 5\diam{\mathcal{Q}_j}$.

For the lower bound, there are two cases to consider.
%As above, denote by $a_i$ a closest point of $A$ to $\mathcal{Q}_i$ and by $e_i$ the endpoint of $\mathcal{Q}_i$ that lies between $a_i$ and $x$, and similarly define $a_j$ and $e_j$.

\emph{Case 1:} $\dist(\mathcal{Q}_i,\mathcal{Q}_j) > 800L^2\diam{\mathcal{Q}_i}$.
By Proposition~\ref{Prop:Endpoint}(1), \eqref{eq:excess}, and Lemma~\ref{Thm:Decomp}(iii),
\begin{align*}
d(f_{k+1}(x),f_{k+1}(y)) & \ge d(f(a_i),f(a_j)) - d(f(a_i),\pi(e_i)) - d(\pi(e_i),f_{k+1}(x))\\
& \qquad \qquad  - d(f(a_j),\pi(e_j)) - d(\pi(e_j),f_{k+1}(y)) \\
& \ge L^{-1}|a_i - a_j| - (184L + 16)(\diam{\mathcal{Q}_i} + \diam{\mathcal{Q}_j}) \\
& \ge L^{-1}|x - y| - L^{-1}(|x - a_i| + |a_j - y|) - 400L\diam{\mathcal{Q}_i} \\
& \ge L^{-1}|x - y| - (10L^{-1} + 400L)\diam{\mathcal{Q}_i} \\
& > L^{-1}|x - y| - 410L (800L^2)^{-1}\dist(\mathcal{Q}_i,\mathcal{Q}_j) \\
& \ge (3L)^{-1}|x - y|.
\end{align*}

\emph{Case 2:} $\dist(\mathcal{Q}_i,\mathcal{Q}_j) \leq 800L^2\diam{\mathcal{Q}_i}$.
In this case, we have
\[ 
|x - y| \le \diam{\mathcal{Q}_i} + \dist(\mathcal{Q}_i,\mathcal{Q}_j) + \diam{\mathcal{Q}_j} \leq 802L^2\diam\cQ_i. 
\]
Case 2 splits now into two subcases.

\emph{Case 2.1:} $i \in \mathscr{I}_{k + 1}$ and $j \in \mathscr{I}_1 \cup \cdots \cup \mathscr{I}_k$.
According to the line following
\eqref{eq:excess},
\begin{align*}
d(f_{k+1}(x),f_{k+1}(y)) \ge d(f_{k+1}(x),f_{k}(A_k))
&\ge \tfrac12\delta_{k+1} \diam{\mathcal{Q}_i} \\
&\ge \delta_{k + 1}(1604L^2)^{-1}|x - y|.
\end{align*}

\emph{Case 2.2:} $i,j \in \mathscr{I}_{k + 1}$. By Lemma~\ref{Lem:DS} we have that 
$\diam{\mathcal{Q}_i} > 800 L\delta_0^{-1}\diam{\mathcal{Q}_j}$. By Lemma \ref{Lem:modulus}(4), the design of $\gamma_i$, Proposition \ref{Prop:Endpoint}(1), and \eqref{eq:excess},
\begin{align*}
d(f_{k+1}(x),f_{k+1}(y)) & \ge \dist(f_{k+1}(\hat{\mathcal{Q}}_i),f_{k+1}(y)) \\
& \ge \dist(f_{k+1}(\hat{\mathcal{Q}}_i),f(a_j)) - d(\pi(e_j),f(a_j))  - d(\pi(e_j),f_{k+1}(y)) \\
& \geq \tfrac12\delta_0 \diam{\mathcal{Q}_i} - (16 + 184L) \diam{\mathcal{Q}_j}\\
& \ge \tfrac14 \delta_0 \diam{\mathcal{Q}_i}\\
& \ge \tfrac14 \delta_0(802L^2)^{-1}|x - y|. \qedhere
\end{align*}
\end{proof}

\section{Proof of Theorem~\ref{thm:1}}\label{sec:proof}

In this section, we will give the proof of the following quantitative version of Theorem~\ref{thm:1}.

\begin{Thm}
\label{thm:main2}
Given $C,C_1>0$, $Q>2$, $p\in (1,Q-1)$, and $L\geq 1$, there exists $L'\geq 1$ with the following property.

Let $(X,d,\mu)$ be a complete geodesic $(C_1,Q)$-Ahlfors regular metric measure space supporting a $p$-Poincar\'e inequality with data $C$. Let $A\subset \R$ be a closed set, let $I$ be the smallest closed interval of $\R$ containing $A$, and let $f:A \to X$ be an $L$-bi-Lipschitz embedding. Then there exists an $L'$-bi-Lipschitz extension $F:I \to X$ of $f$. 

Moreover, if $(x,y)$ is a component of $I\setminus A$, then
\begin{equation} 
\diam{F([x,y])} \leq 75\max\{|x-y|, d(f(x),f(y))\}.
\end{equation}
\end{Thm}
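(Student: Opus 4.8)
The strategy is to take the maps $\pi$ and $g$ constructed in Sections~\ref{sec:endpoints} and \ref{sec:middlethird} and stitch them into a single map $F$ on all of $I$. Recall that $g$ is already defined and bi-Lipschitz on $A \cup \bigcup_i \hat{\cQ}_i$, mapping the middle-third interval $\hat{\cQ}_i = [\hat w, \hat z]$ of each Whitney interval $\cQ_i = [w,z]$ to a bi-Lipschitz curve with endpoints within $(2^8\tilde L\lambda)^{-1}\xi\diam{\cQ_i}$ of the reference points $\pi(w)$ and $\pi(z)$. What remains is to fill in, for each Whitney interval $\cQ_i = [w,z]$, the two outer thirds $[w,\hat w]$ and $[\hat z, z]$. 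On the outer third $[w,\hat w]$, I would first route a geodesic from $g(\hat w)$ to $\pi(w)$ (a short segment, of length at most $(2^8\tilde L\lambda)^{-1}\xi\diam{\cQ_i}$), and then, on an even smaller sub-subinterval abutting the endpoint $w$, build a short curve from $\pi(w)$ to the ``canonical'' image of $w$ that the neighboring Whitney interval(s) will also use. The key point is that each endpoint $x \in E$ should have one well-defined image point --- namely something near $\pi(x)$ --- shared by all the (at most two) Whitney intervals meeting at $x$, so that the concatenation is continuous; the map $\pi$ was designed precisely for this. We reparameterize each such filled interval affinely, set $F|_A = f$, and define $F$ on each $\cQ_i$ to be the concatenation (outer third)--(middle third via $g$)--(outer third).

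\textbf{Verification that $F$ is bi-Lipschitz.} This is the bulk of the argument and proceeds by cases on the locations of the two points $s,t \in I$, exactly as in the proof of Proposition~\ref{Prop:middlethird}. The cases are: (a) $s,t \in A$ --- here $F = f$ is $L$-bi-Lipschitz; (b) $s \in A$, $t$ in some $\cQ_i$ --- use that $|s - t| \simeq \diam{\cQ_i} \simeq \dist(\cQ_i, A)$ when $t$ is deep in $\cQ_i$, combined with Proposition~\ref{Prop:Endpoint}(1)--(2) controlling $d(\pi(w), f(a_w))$ and $\dist(\pi(w), f(A))$ for the relevant endpoint $w$, and the fact that $g(\hat\cQ_i)$ stays $\gtrsim \delta_0\diam{\cQ_i}$ from $f(A)$; (c) $s \in \cQ_i$, $t \in \cQ_j$ with $i \neq j$ --- split into the sub-cases from Lemma~\ref{Lem:DS}, namely $\dist(\cQ_i,\cQ_j)$ large versus $\diam{\cQ_i}/\diam{\cQ_j}$ large versus both comparable, using Proposition~\ref{Prop:Endpoint}(3) (the quantitative separation $d(\pi(x),\pi(y)) \geq \xi(|x-a_x| + |y-a_y|)$) for the lower bound when the intervals are far apart, and the $\delta_0$-distance-from-$f(A)$ estimate when they are close but of very different sizes; (d) $s,t$ in the same $\cQ_i$ --- here one uses that the three pieces of $F|_{\cQ_i}$ are each individually bi-Lipschitz (the outer thirds are concatenations of geodesics, hence bi-Lipschitz by Lemma~\ref{Lem:BL}, and the middle third is $g$), and a lemma of the ``Lemma~\ref{Lem:BL} type'' guaranteeing that a concatenation of boundedly many uniformly bi-Lipschitz pieces whose images are quantitatively separated (here: each outer third stays near $\pi(w)$ resp. $\pi(z)$, while the middle third stays $\delta_0\diam{\cQ_i}$-far from $f(A)$ and hence the pieces do not re-approach each other) is again bi-Lipschitz. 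The diameter bound $\diam{F([x,y])} \leq 75\lambda\max\{|x-y|, d(f(x),f(y))\}$ for a component $(x,y)$ of $I \setminus A$ follows by summing the lengths/diameters of the constituent pieces: the middle third contributes $\leq 414\lambda L\diam{\cQ}$-type bounds from \eqref{eq:diameter}, wait --- more carefully, one bounds $\diam{F([x,y])}$ in terms of $R = d(\pi(x),\pi(y))$ using \eqref{eq:excess}, and then \eqref{eq:endpoint} converts this to $d(f(a_x),f(a_y)) + O(\diam{\cQ})$, and finally $\diam{\cQ} \leq |x-y|$ together with $d(f(a_x),f(a_y)) \lesssim d(f(x),f(y)) + \diam{\cQ}$ gives the claimed form with the stated constant after bookkeeping.

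\textbf{Unbounded case and continuity of $F$.} If $X$ is unbounded and $A$ is bounded above, then the ``last'' component of $I \setminus A$ is an infinite ray; here one uses quasiconvexity of $X$ (guaranteed by completeness, Ahlfors regularity, and the Poincar\'e inequality) to send the ray to a bi-Lipschitz curve escaping to infinity, with bi-Lipschitz constant independent of everything else, and then $I$ can be taken to be $\R$. Continuity of $F$ at points of $A$ that are limits of Whitney endpoints is automatic from estimate (b): as $t \to s \in A$, necessarily $t$ lies in Whitney intervals of diameter $\to 0$, and $F(t)$ is within $O(\diam{\cQ})$ of $f(s)$.

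\textbf{Main obstacle.} The hard part is the same-interval case (d) and, relatedly, ensuring that the three concatenated pieces of $F|_{\cQ_i}$ do not create short-circuits --- i.e., proving the ``concatenation of boundedly many bi-Lipschitz pieces'' lemma in the form needed. For this one must exploit that the outer-third pieces lie in tiny balls around $\pi(w)$ and $\pi(z)$ (radius $\ll \xi\diam{\cQ_i}$), that $d(\pi(w),\pi(z)) \geq \xi\diam{\cQ_i}$ by Proposition~\ref{Prop:Endpoint}(3), and that the middle-third image keeps distance $\geq \tfrac12\delta_0\diam{\cQ_i}$ from $f(A)$ --- in particular from the points $f(a_w), f(a_z)$ which are near $\pi(w),\pi(z)$ --- so the middle third cannot come back near the outer thirds except at the designated gluing points $g(\hat w), g(\hat z)$. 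With these separations in hand, a point on one piece and a point on another piece are at distance at least a fixed fraction of the sum of their distances to the shared gluing point, and the argument closes in the same manner as Lemma~\ref{Lem:BL}. The second delicate point is purely a matter of constant-tracking to land the coefficient $75\lambda$ exactly, which I would handle by being slightly generous in the choices of the auxiliary thresholds ($\xi$, $\delta_0$, the radii of the gluing balls) so that the dominant term in $\diam{F([x,y])}$ is $d(f(x),f(y))$ plus a controlled multiple of $\diam{\cQ} \le |x-y|$.
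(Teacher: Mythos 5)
Your high-level plan --- fill in the gaps of the domain of $g$ near each Whitney endpoint $x\in E$ by routing through a neighborhood of $\pi(x)$, then verify bi-Lipschitzness by cases --- is the right one and is what the paper does. You also correctly identify Lemma~\ref{Lem:BL} as the gluing tool and Propositions~\ref{Prop:Endpoint}, \ref{Prop:middlethird} as the quantitative inputs. However, there are concrete gaps in the ``main obstacle'' step and in the mechanics of the gluing.

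\textbf{The cut points are the heart of the matter, not merely bookkeeping.} You say ``route a geodesic from $g(\hat w)$ to $\pi(w)$'' and then appeal to a ``Lemma~\ref{Lem:BL} type'' result. But Lemma~\ref{Lem:BL} does not say that a concatenation of bi-Lipschitz pieces with separated images is bi-Lipschitz; it requires that the gluing point be a \emph{closest point} of one piece to the other piece's endpoint. A geodesic from $g(\hat w)$ straight to $\pi(w)$ has no reason to hit the adjacent curve at a closest point, and without that, the lower Lipschitz bound for $s,t$ in adjacent pieces can fail. The paper's construction of the cut parameters $t_x^1,\dots,t_x^4$ is precisely the device that manufactures the closest-point condition: $t_x^1$ is chosen so $\dist(g(t_x^1),\gamma_x([\tau_x^2,\tau_x^3]))$ attains a fixed value $\e(\diam\cL_x+\diam\cR_x)$ for the first time, and $t_x^2$ is then a closest point of $\gamma_x$ to $g(t_x^1)$, so the geodesic between them satisfies the hypothesis of Lemma~\ref{Lem:BL}. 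Your sketch skips this, and that is exactly the step where a naive ``geodesic to $\pi(w)$'' construction breaks.

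\textbf{Your stated reason that the middle third avoids the outer thirds is incorrect.} You argue that because $g(\hat\cQ_i)$ stays distance $\gtrsim\delta_0\diam\cQ_i$ from $f(A)$, it stays away from $f(a_w),f(a_z)$, which are ``near $\pi(w),\pi(z)$,'' and therefore the middle-third curve cannot re-approach the gluing balls around $\pi(w),\pi(z)$. But $\pi(w)$ is itself far from $f(A)$ --- indeed $\dist(\pi(w),f(A))\ge\xi|w-a_w|$ by Proposition~\ref{Prop:Endpoint}(2) --- so being far from $f(A)$ is entirely consistent with being near $\pi(w)$. Staying far from $f(A)$ gives no control on $\dist(g(\hat\cQ_i),\pi(w))$. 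The separation you actually need comes from the bi-Lipschitzness of $g$ itself: $d(g(t),\pi(w)) \ge d(g(t),g(\hat w)) - d(g(\hat w),\pi(w)) \ge \hat L^{-1}|t-\hat w| - (2^8\tilde L\lambda)^{-1}\xi\diam\cQ_i$, so points of the middle third far (in parameter) from the gluing point are far (in $X$) from $\pi(w)$. This is also why the modification region extends \emph{into} the middle thirds (the interval $[t_x^1,t_x^4]$ contains $[\tau_x^2,\tau_x^3]$ plus pieces of $\hat\cL_x,\hat\cR_x$ on either side), rather than being confined to the outer thirds as in your picture.

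\textbf{Two smaller omissions.} First, the new curve near $\pi(x)$ is not obtained by sending the parameter to $\pi(x)$; it is a fresh bi-Lipschitz curve $\gamma_x$ built by applying Corollary~\ref{Cor:Path} to $Y = g(\hat A\setminus(\hat\cL_x\cup\hat\cR_x))$ and then straightening with Lemma~\ref{Prop:Modi}. It stays in a small ball around $\pi(x)$ but need not pass through $\pi(x)$, and crucially it stays quantitatively far from $g(\hat A\setminus(\cL_x\cup\cR_x))$, which is the separation actually used in the Case-3 lower bounds. Second, the paper processes $E$ in two alternating passes $E'$ then $E''$ (so that no two adjacent endpoints are modified simultaneously): when defining the cut points for $x\in E''$ one must stay clear of the already-fixed intervals $[t_{x_L}^1,t_{x_L}^4]$ and $[t_{x_R}^1,t_{x_R}^4]$. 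Without this two-pass structure, the modification regions of adjacent endpoints could overlap and the continuity/consistency of $F$ would be in question.
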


The remainder of this section is devoted to the proof of this theorem. Let $\{\mathcal{Q}_i\}_{i\in\N}$ be the Whitney decomposition of $I \setminus A$ from Lemma \ref{Thm:Decomp} and let
\[\hat{A} := A \cup \bigcup_{i\in\N}\hat{\mathcal{Q}}_i.\]
Recall that $\hat{\mathcal{Q}}_i$ denotes the middle third of the Whitney interval $\mathcal{Q}_i$ and that $E$ denotes the set of endpoints of Whitney intervals $\{\mathcal{Q}_i\}_{i\in\N}$.

There is a map $\pi:E \to X$ satisfying the properties of 
Proposition~\ref{Prop:Endpoint}, 
and there exist a constant $\hat{L}\geq 1$ depending only on $C$, $C_1$, $Q$, $p$, and $L$, and there exists an $\hat{L}$-bi-Lipschitz extension
\[ g : \hat{A} \to X\]
of $f$ 
satisfying the properties outlined in Proposition~\ref{Prop:middlethird}.
In particular, 
if $(x,y)$ is a component of $I\setminus A$, if $\mathcal{Q}_i \subset (x,y)$, and if $x$ is the closest point of $A$ to $\mathcal{Q}_i$, then by \eqref{eq:endpoint} and \eqref{eq:excess},
\begin{equation}\label{eq:thm1}
\max_{z\in \hat{\mathcal{Q}}_i} d(f(x),g(z)) \leq 2d(f(x),f(y)) + 73\diam{\mathcal{Q}_i}.
\end{equation} 

We introduce several pieces of notation.
Given $x\in E$, we denote by $\mathcal{L}_x$ (resp. $\mathcal{R}_x$) the Whitney interval for which $x$ is the right (resp. left) endpoint. As above, $\hat{\mathcal{L}}_x$ and $\hat{\mathcal{R}}_x$ are the middle thirds of intervals $\mathcal{L}_x$ and $\mathcal{R}_x$. By \eqref{eq:neighbor}, for any $x\in E$ we have
\[ \tfrac14\diam{\mathcal{L}_x} \leq \diam{\mathcal{R}_x} \leq 4\diam{\mathcal{L}_x}.\]
Further, for any $x\in E$ we write
\[ \mathcal{L}_x = [x_L,x], \quad \hat{\mathcal{L}}_x = [\tau_x^1,\tau_x^2], \quad \mathcal{R}_x = [x,x_R], \quad \hat{\mathcal{R}}_x = [\tau_x^3,\tau_x^4].\]

\begin{figure}[ht]
    \centering
    \includegraphics[width=0.7\textwidth]{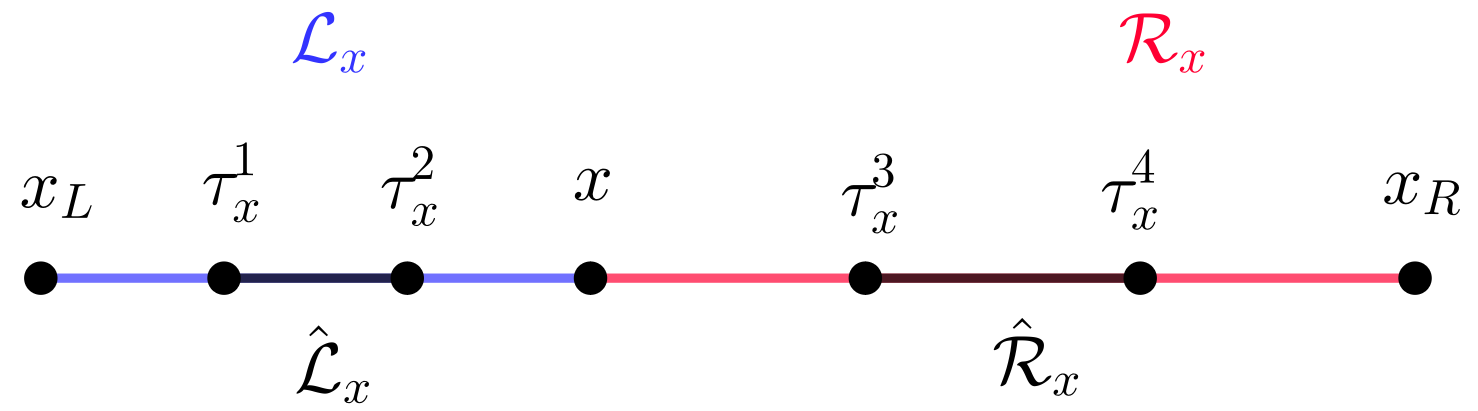}
    \caption{}
    \label{fig:1}
\end{figure}

Since $g$ is $\hat{L}$-bi-Lipschitz, there exists $C_2>0$ depending only on $C$, $C_1$, $Q$, $p$, and $L$ such that the set $g(\hat{A})$ (and each of its subsets) is $(C_2,1)$-homogeneous.

\subsection{Local modifications around points in \texorpdfstring{$E$}{E}}
We divide $E$ into two sets $E'$ and $E''$ such that for any two points in $E'$ there exists a point in $E''$ between them and vice-versa. That is, for any $x\in E'$ we have $x_L,x_R\in E''$, and for any $x\in E''$ we have $x_L,x_R\in E'$.

We perform local modifications around points in $E$ starting with points in $E'$.

\subsubsection{Local modifications around points in $E'$}\label{sec:E1} Fix a point $x\in E'$. By the $(C_2,1)$-homogeneity of $g(\hat{A}\setminus (\hat{\mathcal{L}}_x\cup\hat{\mathcal{R}}_x))$, by Corollary \ref{Cor:Path} and  Proposition~\ref{Prop:middlethird}(1,2), there exists a constant $C'\geq 1$ depending only on $C$, $C_1$, $Q$, $p$, $L$, and there exists a curve $\sigma_x:[0,1] \to X$ such that 
\begin{enumerate}
\item $\sigma_x(0) = g(\tau_x^2)$, $\sigma_x(1)=g(\tau_x^3)$, 
\item 
$\sigma_x([0,1]) \subset B(g(\tau_x^2),2d(g(\tau_x^2),g(\tau_x^3)))$, so
for each $t\in[0,1]$
\begin{align*} 
d(\sigma_x(t),\pi(x)) &\leq 2d(g(\tau_x^2),g(\tau_x^3)) + d(g(\tau_2^x),\pi(x))\\ 
&\leq 5(2^8\tilde L)^{-1}\xi\max\{\diam{\cL_x},\diam{\cR_x}\}
\end{align*}
\item $\len(\sigma_x) \leq  C' \max\{\diam{\cL_x},\diam{\cR_x}\}$
\item $\dist(\sigma_x([0,1]),g(\hat{A}\setminus (\hat{\cL}_x\cup\hat{\cR}_x))) \geq (C')^{-1}\min\{\diam{\cL_x},\diam{\cR_x}\}$.
\end{enumerate}

By Lemma \ref{Prop:Modi}, there exists $L^*>1$ depending only on $C$, $C_1$, $Q$, $p$, and $L$, and there exists an $L^*$-bi-Lipschitz map
\[ \gamma_x: [\tau_x^2,\tau_x^3] \to B\left(\pi(x),6(2^8\tilde L)^{-1}\xi\max\{\diam{\cL_x},\diam{\cR_x}\}\right) \] 
such that $\gamma_x(\tau_x^2)=\sigma_x(0)=g(\tau_x^2)$, $\gamma_x(\tau_x^3)=\sigma_x(1)=g(\tau_x^3)$, and for all $t\in [\tau_x^2,\tau_x^3]$,
\[ \dist(\gamma_x(t),\sigma_x([0,1])) \leq (2^{11}C'\tilde L)^{-1}\xi\max\{\diam{\cL_x},\diam{\cR_x}\}.\]
In particular, 
\begin{equation}\label{e-farmiddlepart}
\dist\left(\gamma_x([\tau_x^2,\tau_x^3]),g(\hat{A}\setminus (\cL_x\cup \cR_x))\right) \geq (2C')^{-1}\max\{\diam{\cL_x},\diam{\cR_x}\}.
\end{equation}

%The concatenation of curves $g|\hat{\cL}_x$, $\gamma_x$, $g|\hat{\cR}_x$ may not be injective, let alone bi-Lipschitz. To resolve this, we use Lemma \ref{Lem:BL} to perform two modifications, one close to $g(\tau_x^2)$ and another close to $g(\tau_x^3)$. 
Set $\e = (2^{50}\tilde L\hat LL^*C')^{-2}\xi$.
Define
$$t_x^1 = \min \{ t \in [\tau_x^1,\tau_x^2] : \dist(g(t),\gamma_x([\tau_x^2,\tau_x^3])) = \e(\diam{\cL_x}+\diam{\cR_x}) \}$$
and
$$ t_x^2 = \max \{ t \in [\tau_x^2,\tau_x^3] : d(g(t_x^1),\gamma_x(t)) = \e(\diam{\cL_x}+\diam{\cR_x}) \}.$$

By \eqref{eq:neighbor}, Proposition~\ref{Prop:Endpoint}(3), and Proposition~\ref{Prop:middlethird}(1,2),
\begin{align*}
    d(g(\tau_x^1),g(\tau_x^2)) \geq d(\pi(x_L),\pi(x)) - d(\pi(x_L),g(\tau_x^1)) - d(\pi(x),g(\tau_x^2)) \geq \tfrac12\xi\diam{\cL_x},
\end{align*}
so we have that
\begin{align}\label{eq:est1}
d(g(t^1_x)&,g(\tau_x^1))\\ &\geq d(g(\tau_x^1),g(\tau_x^2)) - \max_{t\in [\tau_x^2,\tau_x^3]}d(\gamma_x(t),g(\tau_x^2)) - \dist(g(t_x^1),\gamma_x([\tau_x^2,\tau_x^3])) \notag\\
&\geq \tfrac12\xi\diam{\cL_x} - \max_{t\in[0,1]}d(\sigma_x(t),g(\tau_x^2))\notag\\ 
&\quad - (2^{11}C'\tilde L)^{-1} \xi\max\{\diam{\cL_x},\diam{\cR_x}\}- \e(\diam{\cL_x}+\diam{\cR_x})\notag \\
&\geq \left( \tfrac12\xi - 2^{-5} \xi - 2^{-9}\xi - 5\e \right)\diam{\cL_x}\notag \\
&\geq \tfrac14\xi\diam{\cL_x}\notag
\end{align}
and
\begin{align}\label{eq:est1'}
d(g(t^1_x),g(\tau_x^2)) \geq \dist(g(t_x^1),\gamma_x([\tau_x^2,\tau_x^3])) = \e(\diam{\cL_x}+\diam{\cR_x}).
\end{align}

Moreover,
\begin{align*}
d(\gamma_x(t_x^2),\gamma_x(\tau_x^3)) &\geq \dist(g(\tau_x^3),g([\tau_x^1,\tau_x^2])) - \dist(\gamma_x(t_x^2),g([\tau_x^1,\tau_x^2]))\\
&\geq \tfrac13\hat{L}^{-1}(\diam{\cL_x}+\diam{\cR_x}) - \e(\diam{\cL_x}+\diam{\cR_x})\\
&\geq \tfrac14\hat{L}^{-1}(\diam{\cL_x}+\diam{\cR_x}).
\end{align*}

Define now
\[ t_x^4 = \max\{t\in [\tau_x^3,\tau_x^4] : d(g(t),\gamma_x([\tau_x^2,\tau_x^3])) = \e(\diam{\cL_x}+\diam{\cR_x}) \}\]
and
$$ t_x^3 = \min \{ t \in [t_x^2,\tau_x^3] : d(\gamma_x(t), g(t_x^4)) = \e(\diam{\cL_x}+\diam{\cR_x}) \}.$$

\begin{figure}[ht]
    \centering
    \includegraphics[width=\textwidth]{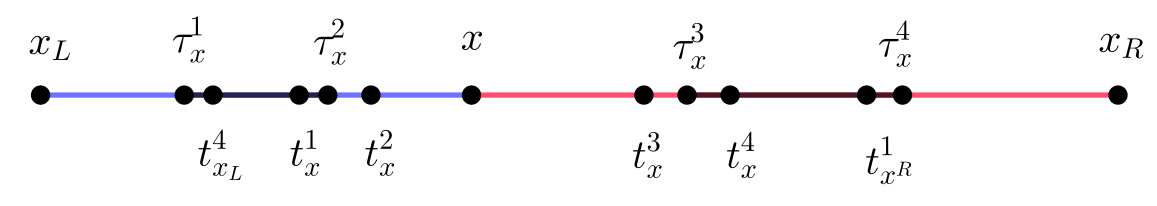}
    \caption{}
    \label{fig:2}
\end{figure}

As in \eqref{eq:est1}, we have that
\begin{equation}\label{eq:est2} 
d(g(t_x^4),g(\tau_x^4)) \geq \tfrac14\xi\diam{\cR_x}
\end{equation}
and
\begin{equation*}
d(g(t_x^4),g(\tau_x^3)) \geq \e(\diam{\cL_x}+\diam{\cR_x}).
\end{equation*}
Moreover, if $t\in [\tau_x^2,\tau_x^3]$ satisfies $d(\gamma_x(t),g(t_x^4)) = \e(\diam{\cL_x}+\diam{\cR_x})$, then
\begin{align*}
d(\gamma_x(t_x^2),\gamma_x(t)) &\geq \dist(g([\tau_x^1,\tau_x^2]),g([\tau_x^3,\tau_x^4])) - 2\e(\diam{\cL_x}+\diam{\cR_x})\\
&\geq (\tfrac13\hat{L}^{-1} - 2\e)(\diam{\cL_x}+\diam{\cR_x})\\
&\geq \tfrac14\hat{L}^{-1}(\diam{\cL_x}+\diam{\cR_x}).
\end{align*}
Therefore, $t_x^3$ is well defined and
\begin{equation}\label{eq:est3} 
t_x^3 - t_x^2 \geq (4\hat{L}L^*)^{-1}(\diam{\cL_x}+\diam{\cR_x}).
\end{equation}

\subsubsection{Local modifications around points in $E''$}
Fix $x\in E''$. We proceed to define $\gamma_x$ and points $t_x^1,\dots,t_x^4$ as in \textsection\ref{sec:E1}. The only difference is that we take into account the modifications done for points in $x_L,x_R \in E'$. In particular, we define
\begin{align*}
t_x^1 &= \min \{ t \in [t^4_{x_L},\tau_x^2] : \dist(g(t),\gamma_x([\tau_x^2,\tau_x^3])) = \e(\diam{\cL_x}+\diam{\cR_x}) \}\\
t_x^2 &= \max \{ t \in [\tau_x^2,\tau_x^3] : d(g(t_x^1),\gamma_x(t)) = \e(\diam{\cL_x}+\diam{\cR_x}) \}\\
t_x^4 &= \max\{t\in [\tau_x^3,t_{x_R}^1] : d(g(t),\gamma_x([\tau_x^2,\tau_x^3])) = \e(\diam{\cL_x}+\diam{\cR_x}) \}\\
t_x^3 &= \min \{ t \in [t_x^2,\tau_x^3] : d(\gamma_x(t), g(t_x^4)) = \e(\diam{\cL_x}+\diam{\cR_x}) \}.
\end{align*}
Equations \eqref{eq:est1}, \eqref{eq:est2}, \eqref{eq:est3} are still valid for $x \in E''$ as well. 

Furthermore, suppose that $x<y$ are consecutive points in $E$; that is, $x=y_L$ (or equivalently $y=x_R$). Then,
\begin{align}\label{eq:est4}
d(g&(t_x^4),g(t_{y}^1))\\ 
&\geq d(\pi(x),\pi(y)) - d(\pi(x),g(t_x^4)) - d(\pi(y),g(t_y^1)) \notag \\
&\geq \xi\diam{\cL_y} - \e(\diam{\cL_x}+\diam{\cR_x}) -6(2^8\tilde L)^{-1}\xi(\diam{\cL_x}+\diam{\cR_x})\notag\\ 
&\qquad -\e(\diam{\cL_y}+\diam{\cR_y}) -6(2^8\tilde L)^{-1}\xi(\diam{\cL_y}+\diam{\cR_y}) \notag\\
&\geq (\xi - 10\e - 60(2^8\tilde L)^{-1}\xi)\diam{\cL_y} \notag\\
&\geq \tfrac12\xi \diam{\cL_y}.\notag
\end{align}

\subsection{Definition of the extension \texorpdfstring{$F$}{F} and proof of Theorem \ref{thm:main2}}

Set
\[ \tilde A = \hat A \setminus \bigcup_{x \in E} [t_x^1,t_x^4]. \]
Define the map $F:I \to X$ so that
\begin{enumerate}
\item 
$F|\tilde A = g|\tilde A$,
\item for each $x\in E$, $F|[t_x^2,t_x^3] = \gamma_x|[t_x^2,t_x^3]$,
\item for each $x\in E$, $F|[t_x^1,t_x^2]$ is the geodesic from $g(t_x^1)$ to $\gamma_x(t_x^2)$ of constant speed,
\item for each $x\in E$, $F|[t_x^3,t_x^4]$ is the geodesic from $\gamma_x(t_x^3)$ to $g(t_x^4)$ of constant speed.
\end{enumerate}

Clearly, $F$ is an extension of $f$. In view of \eqref{eq:thm1}, the following proposition completes the proof of Theorem \ref{thm:main2}.

\begin{prop}
The map $F$ is an $L'$-bi-Lipschitz embedding for some $L'\geq 1$ depending only on $C$, $C_1$, $Q$, $p$, and $L$.
\end{prop}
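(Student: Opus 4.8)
### Proof proposal

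The plan is to verify the bi-Lipschitz estimate for $F$ by a case analysis that mirrors the structure of the construction, grouping pairs of points $s<t$ in $I$ according to which ``pieces'' they lie in. The pieces are: (a) the untouched part $\tilde A$, where $F=g$; (b) the middle modification intervals $[t_x^2,t_x^3]$, where $F=\gamma_x$; and (c) the four geodesic bridges $[t_x^1,t_x^2]$ and $[t_x^3,t_x^4]$ attached around each $x\in E$. On each single piece the map is already bi-Lipschitz with a controlled constant: $g$ is $\hat L$-bi-Lipschitz by Proposition~\ref{Prop:middlethird}, each $\gamma_x$ is $L^*$-bi-Lipschitz by construction via Lemma~\ref{Prop:Modi}, and each geodesic bridge is $1$-bi-Lipschitz since geodesics are parameterized by arc length. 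So the content is entirely in the \emph{cross terms}: showing that when $s$ and $t$ lie in different pieces, $d(F(s),F(t))$ is comparable to $|s-t|$.

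First I would dispose of the local cross terms: $s,t$ both inside the cluster of intervals $[t_x^1,t_x^4]$ around a single $x\in E$ (possibly one in a bridge and one in $[t_x^2,t_x^3]$, or one in $\tilde A$ adjacent to $t_x^1$ or $t_x^4$). Here I would argue exactly as in Lemma~\ref{Lem:BL}: the concatenation of an $L$-bi-Lipschitz curve with a geodesic to a nearest point is $2L$-bi-Lipschitz, and iterating this over the at most four junctions around $x$ gives a uniformly bi-Lipschitz map on $[t_x^1,t_x^4]$ (indeed on $[\tau_x^1,\tau_x^4]$), using the separation estimates \eqref{eq:est1}, \eqref{eq:est1'}, \eqref{eq:est2}, \eqref{eq:est3} to control the lower bounds and the length bound (3) on $\sigma_x$ together with \eqref{eq:diameter} to control the upper bounds. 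The key geometric inputs are that the bridges are short (of order $\e(\diam\cL_x+\diam\cR_x)$, by the defining equations of $t_x^1,\dots,t_x^4$) while the total interval $[t_x^1,t_x^4]$ has length of order $\diam\cL_x+\diam\cR_x$ by \eqref{eq:est3}, so no bridge can ``collapse'' a macroscopic separation.

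Next, the global cross terms: $s\in P$, $t\in P'$ with $P,P'$ pieces associated to \emph{different} points of $E$, or to Whitney intervals with different nearest points in $A$, or with one of $s,t$ in $A$. For the upper bound I would use the triangle inequality routing through the reference points $\pi(e)$ and $f(a)$: every value $F(s)$ is within $O(\lambda)\diam\cQ$ of some $\pi(e)$ (from Proposition~\ref{Prop:middlethird}(3), \eqref{eq:excess}, and the closeness of the bridges to $\gamma_x$, which is itself close to $\pi(x)$ by item (2) of \S\ref{sec:E1}), then $\pi$ is $\tilde L$-bi-Lipschitz and $f$ is $L$-bi-Lipschitz, together with Lemma~\ref{Thm:Decomp}(iii); this is the same computation as the ``Secondly'' and ``Thirdly'' cases in the proof of Proposition~\ref{Prop:middlethird}. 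For the lower bound I would again split into the two Lemma~\ref{Lem:DS}-style alternatives — far-apart Whitney intervals versus comparable-distance but very different sizes — and additionally use \eqref{eq:est4} (the clean $\tfrac12\xi\diam\cL_y$ separation between $g(t_x^4)$ and $g(t_y^1)$ for consecutive $x<y$) to handle the case where $s,t$ sit in pieces belonging to adjacent points of $E$; the crucial point is that the local modifications only move $g$ by $O(\e)$-small amounts, with $\e$ chosen (as in \S\ref{sec:E1}) much smaller than $\xi$, $\hat L^{-1}$, etc., so all the lower bounds established for $g$ survive with a slightly worse constant. Throughout, when $s$ or $t$ lies in $A$, the estimate \eqref{eq:thm1} controls $d(f(x),g(z))$ for $z$ in a nearby Whitney middle third, and the bridges add only lower-order corrections.

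The main obstacle I expect is the bookkeeping at the \emph{triple junctions}: a point $s$ in a bridge $[t_x^3,t_x^4]$ and a point $t$ in the adjacent bridge $[t_{x_R}^1,t_{x_R}^2]$, where between them lies a stretch of $g(\tilde A)$ that has been ``eaten into'' from both sides. One must check that the two bridges, which are geodesics emanating from $\gamma_x$ and $\gamma_{x_R}$ respectively and landing on the (now truncated) curve $g$, do not come close to each other except near their feet — this is where \eqref{e-farmiddlepart} (the curve $\gamma_x$ stays far from $g(\hat A\setminus(\cL_x\cup\cR_x))$) and \eqref{eq:est4} do the real work. Once these adjacency estimates are in place, assembling the global lower bound is a routine (if lengthy) exhaustion of cases, and the proof concludes by taking $L'$ to be the maximum of the finitely many constants produced, all of which depend only on $C$, $C_1$, $Q$, $p$, $\lambda$, and $L$.
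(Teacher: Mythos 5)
Your proposal follows the same strategy as the paper: a case analysis on which piece of the decomposition (untouched $\tilde A$, middle modification $[t_x^2,t_x^3]$, or geodesic bridge) each of $s,t$ lies in, using Lemma~\ref{Lem:BL} at the junctions, \eqref{eq:est1}--\eqref{eq:est4} and \eqref{e-farmiddlepart} for the lower bounds, and routing through $\pi$ and $f$ for the global upper bounds. The only cosmetic difference is that for the global lower bound the paper does not re-run a Lemma~\ref{Lem:DS}-type dichotomy but instead splits on whether the two clusters of $E$ are adjacent (using \eqref{eq:est4}) or not (using the bi-Lipschitz constant of $\pi$), and splits Case~3.2 by the size of $|s-t|$ relative to $\diam\cL_x$; these amount to the same quantitative inputs your sketch identifies.
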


\begin{proof}
Fix $s,t \in I$ with $s < t$. We may assume that one of $s,t$ is in $[t_x^1,t_x^4]$ for some $x \in E$, since, otherwise $F = g$, which is $\hat L$-bi-Lipschitz. Assume without loss of generality that $t \in [t_x^1,t_x^4]$ for some $x \in E$. The proof is a case study.

\emph{Case 1.} Assume that $s \in [t_x^1,t_x^4]$. There are a few subcases to consider.

\emph{Case 1.1.} Assume that $s,t \in [t_x^1,t_x^2]$ or $s,t \in [t_x^3,t_x^4]$. Without loss of generality, assume the former. In this case, $F(s)$ and $F(t)$ lie on a geodesic of unit speed joining $g(t_x^1)$ and $\gamma_x(t_x^2)$, and by \eqref{eq:est1'},
\[ \hat L^{-1}\e(\diam\cL_x + \diam\cR_x) \le 
|t_x^1 - \tau_x^2|
\leq
|t_x^1 - t_x^2| \le \diam\cL_x + \diam\cR_x, \]
so
\[ \frac{d(F(s),F(t))}{|s - t|} = \frac{d(g(t_x^1),\gamma_x(t_x^2))}{|t_x^1 - t_x^2|} \in [\e,\hat L]. \]

\emph{Case 1.2.} Assume that $s,t \in [t_x^2,t_x^3]$. Here $F|[t_x^2,t_x^3] = \gamma_x|[t_x^2,t_x^3]$, and $\gamma_x$ is $L^*$-bi-Lipschitz.

\emph{Case 1.3.} Assume that $s \in [t_x^1,t_x^2]$ and $t \in [t_x^2,t_x^3]$ or $s \in [t_x^2,t_x^3]$ and $t \in [t_x^3,t_x^4]$. Without loss of generality, we assume the former. Then $F(s)$ lies on a geodesic of unit speed joining $g(t_x^1)$ and $\gamma_x(t_x^2)$, and $F(t) = \gamma_x(t)$. Since $\gamma_x(t_x^2)$ is a closest point of $\gamma_x([t_x^2,t_x^3])$ to $g(t_x^1)$, Lemma~\ref{Lem:BL} implies that the gluing $F([t_x^1,t_x^3]) = g([t_x^1,t_x^2]) \cup \gamma_x([t_x^2,t_x^3])$ is bi-Lipschitz with a constant depending only on that of $\gamma_x$, which itself depends only on $C$, $C_1$, $Q$, $p$, and $L$.

\emph{Case 1.4.} Assume that $s \in [t_x^1,t_x^2]$ and $t \in [t_x^3,t_x^4]$. By \eqref{eq:est3},
\[ (4\hat LL^*)^{-1}(\diam\cL_x + \diam\cR_x) \le |t_x^2 - t_x^3| \le |s - t| \le \diam\cL_x + \diam\cR_x. \]
On one hand, using the fact that $F(s)$ and $F(t_x^2)$, and $F(t_x^3)$ and $F(t)$ lie on unit speed geodesics joining $g(t_x^1)$ to $\gamma_x(t_x^2)$ and $\gamma_x(t_x^3)$ to $g(t_x^4)$ respectively and
%and considering the codomain ball of $\gamma_x$, we get
\begin{align*}
d(F(s),F(t)) & \leq d(F(s),F(t_x^2)) + d(\gamma_x(t_x^2),\gamma_x(t_x^3)) + d(F(t_x^3),F(t)) \\
& \leq d(g(t_x^1),\gamma_x(t_x^2)) + d(\gamma_x(t_x^2),\gamma_x(t_x^3)) + d(\gamma_x(t_x^3),g(t_x^4)) \\
& \le (2\e + 12(2^8\tilde L)^{-1}\xi)(\diam\cL_x + \diam\cR_x).
\end{align*}
On the other hand, arguing similarly gives
\begin{align*}
d(F(s),F(t)) & \ge d(\gamma_x(t_x^2),\gamma_x(t_x^3)) - d(F(s),F(t_x^2)) - d(F(t),F(t_x^3)) \\
& \ge (L^*)^{-1}|t_x^2 - t_x^3| - 2\e(\diam\cL_x + \diam\cR_x) \\
& \ge ((4\hat L(L^*)^2)^{-1} - 2\e)(\diam\cL_x + \diam\cR_x)\\
&\geq (8\hat{L}(L^*)^2)^{-1}(\diam\cL_x + \diam\cR_x).
\end{align*}

\emph{Case 2.} Assume that $s \in [t_y^1,t_y^4]$ for some $y \in E$ with $y < x$.
First, using \eqref{eq:est4},
\[ (10\hat L)^{-1}\xi(\diam\cR_y + \diam\cL_x) \le |t_y^4 - t_x^1| \le |s - t|. \]
As with Case 1.4,
\begin{align*}
d(F(s),F(t_y^4)) & \le d(F(t_y^1),F(t_y^2)) + d(\gamma_y(t_y^2),\gamma_y(t_y^3)) + d(F(t_y^3),F(t_y^4)) \\
& \le (2\e + 12(2^8\tilde L)^{-1}\xi)(\diam\cL_y + \diam\cR_y)
\end{align*}
and similarly
\[ d(F(t_x^1),F(t)) \le (2\e + 12(2^8\tilde L)^{-1}\xi)(\diam\cL_x + \diam\cR_x). \]
Thus
\begin{align*}
d(F(s),F(t)) & \le d(F(s),F(t_y^4)) + d(g(t_y^4),g(t_x^1)) + d(F(t_x^1),F(t)) \\
& \le 5(2\e + 1)(\diam\cR_y + \diam\cL_x) + \hat L|t_y^4 - t_x^1| \\
& \le 51\hat L\xi^{-1}(2\e + 1)|s - t|.
\end{align*}
For the lower bound, if $y = x_L$, then $|s-t| \leq 9|x-y|$ and \eqref{eq:est4} gives
\begin{align*}
    d(F(s),F(t)) &\ge d(g(t_y^4),g(t_x^1)) - d(F(s),F(t_y^4)) - d(F(t_x^1),F(t))\\
    &\ge \tfrac12 \xi \diam{\cL_x} - 10(2\e + 12(2^8\tilde L)^{-1}\xi)\diam\cL_x\\
    &\ge \tfrac{1}{50} \xi \diam{\cL_x}\\
    &\ge \tfrac{1}{450}|s-t|.
\end{align*} 
If instead $y < x_L$, then
\begin{align*}
d(F(s),F(t)) & \ge d(\pi(x),\pi(y)) - d(\pi(y),F(s)) - d(\pi(x),F(t)) \\
& \ge \tilde L^{-1}|x - y| - 5(12(2^8\tilde L)^{-1}\xi + 2\e)(\diam{\cR_y} + \diam{\cL_x}) \\
& \ge (\tilde L^{-1} - 10(12(2^8\tilde L)^{-1}\xi + 2\e))|x - y| \\
%& \ge ((3\tilde L)^{-1} - 60(2^8\tilde L)^{-1}\xi - 10\e))|s - t| \\
& \ge (18\tilde L)^{-1}|s - t|.
\end{align*}

\emph{Case 3.} Assume that $s \in \tilde{A}$. Then, $y\in [t_{y_L}^4,t_y^1]$ for some $y \in E$. There are two subcases to consider.

\emph{Case 3.1.} Assume that $y = x$. There are further subcases here.

\emph{Case 3.1.1} Assume that $t \in [t_x^1,t_x^2]$. As in Case 1.3, $g(t_x^1)$ is a closest point of $g([t_{x_L}^4,t_x^1])$ to $\gamma_x(t_x^2)$, so Lemma \ref{Lem:BL} tells us that $F([t_{x_L}^4,t_x^1])$ is bi-Lipschitz with a constant depending only on $C$, $C_1$, $Q$, $p$, and $L$.

\emph{Case 3.1.2.} Assume that $t \in [t_x^2,t_x^3]$. By \eqref{eq:est1'},
\[ \e\hat L^{-1}(\diam\cL_x + \diam\cR_x) \le |t_x^1 - \tau_x^2| \le |s - t| \le \diam\cL_x + \diam\cR_x, \]
so our desired bounds come from
\[ d(F(s),F(t)) \ge \dist(g([\tau_x^1,t_x^1]),\gamma_x([t_x^2,t_x^3])) = \e(\diam\cL_x + \diam\cR_x) \]
and
\begin{align*}
d(F(s),F(t)) & \le \diam g(\hat\cL_x) + \diam\gamma_x([\tau_2,\tau_3]) \\
& \le (\hat L + 12(2^8\tilde L)^{-1}\xi)(\diam\cL_x + \diam\cR_x).
\end{align*}

\emph{Case 3.1.3.} Assume that $t \in [t_x^3,t_x^4]$. By \eqref{eq:est3},
\[ (4\hat LL^*)^{-1}(\diam\cL_x + \diam\cR_x) \leq |t_x^2 - t_x^3| \le |t - s| \le \diam\cL_x + \diam\cR_x. \]
Now on one hand,
\begin{align*}
d(F(s),F(t)) & \le \diam g(\hat\cL_x) + \diam\gamma_x([\tau_x^2,\tau_x^3]) + \diam g(\hat\cR_x) \\
& \le (\hat L + 12(2^8\tilde L)^{-1}\xi)(\diam\cL_x + \diam\cR_x).
\end{align*}
On the other hand,
\begin{align*}
d(F(s),F(t)) & \ge \dist(g(\hat\cL_x),g(t_x^4)) - \diam F([t_x^3,t_x^4]) \\
& \ge ((3 \hat L)^{-1} - \e)(\diam\cL_x + \diam\cR_x).
\end{align*}

\emph{Case 3.2.} Assume that $y < x$. Then
\[ 3^{-1}(\diam\cR_y + \diam\cL_x) \le |\tau_y^2 - \tau_x^1| \le |s - t|. \]
As in Case 2, we have
\begin{align*}
d(F(s),F(t)) & \le d(g(s),g(t_y^1)) + d(g(t_y^1),g(t_x^1)) + d(F(t_x^1),F(t)) \\
& \le \hat L|s - t_y^1| + \hat L|t_y^1 - t_x^1| + (2\e + 12(2^8\tilde L)^{-1}\xi)(\diam\cL_x + \diam\cR_x) \\
& \le 3(\hat L + L^* + 2\e + 1)|s - t|.
\end{align*}
For the lower bound, set $M := (2\e + 6(2^8\tilde L)^{-1}\xi)$.
If $|s-t| \leq M\diam{\cL_x}$, then
the desired bound is a result of the following application of \eqref{e-farmiddlepart}:
\begin{align*}
    d(F(s),F(t)) 
    &\geq 
    \dist\left(F([t_x^1,t_x^4]),g(\hat{A}\setminus (\cL_x\cup \cR_x))\right) \\
    &\geq ((2C')^{-1} - 4\e) \max\{\diam{\cL_x},\diam{\cR_x}\}\\
    &\geq (16C')^{-1}\diam{\cL_x}.
\end{align*}
If $|s-t| > M\diam{\cL_x}$, then
\begin{align*}
d(F(s),F(t)) &\geq d(g(s), g(t_x^1)) - d(F(t_x^1),F(t))\\ 
&\geq \hat{L}^{-1}|s-t_x^1| - \diam{F([t_x^1,t_x^4])}\\ 
&\geq \tfrac1{16}\hat{L}^{-1}|s-t| - (2\e + 6(2^8\tilde L)^{-1}\xi)(\diam{\cL_x} + \diam{\cR_x})\\
&\geq \tfrac1{16}\hat{L}^{-1}|s-t| - 5(2\e + 6(2^8\tilde L)^{-1}\xi)\diam{\cL_x}\\
&\geq \tfrac1{32}\hat{L}^{-1}|s-t|. \qedhere
\end{align*}
\end{proof}

\subsection{The unbounded case}

Assuming that $X$ is unbounded, one can replace $I$ in Theorem \ref{thm:main2} by $\R$. The difference here is that we consider a Whitney decomposition of $\R\setminus A$. The unboundedness of $X$ guarantees the existence of function $\pi:E \to X$ as in Proposition \ref{Prop:Endpoint}. The rest of the proof is verbatim.

\bibliographystyle{alpha}
\bibliography{bibliography}

\begin{thebibliography}{HKST15}

\bibitem[BHR01]{Bonk:2001}
Mario Bonk, Juha Heinonen, and Steffen Rohde.
\newblock Doubling conformal densities.
\newblock {\em J. Reine Angew. Math.}, 541:117--141, 2001.

\bibitem[BP99]{BP99}
Marc Bourdon and Herv\'{e} Pajot.
\newblock Poincar\'{e} inequalities and quasiconformal structure on the
  boundary of some hyperbolic buildings.
\newblock {\em Proc. Amer. Math. Soc.}, 127(8):2315--2324, 1999.

\bibitem[BV19]{Badger:2019}
Matthew Badger and Vyron Vellis.
\newblock Geometry of measures in real dimensions via {H}\"{o}lder
  parameterizations.
\newblock {\em J. Geom. Anal.}, 29(2):1153--1192, 2019.

\bibitem[Cha01]{Chavel}
Isaac Chavel.
\newblock {\em Isoperimetric inequalities}, volume 145 of {\em Cambridge Tracts
  in Mathematics}.
\newblock Cambridge University Press, Cambridge, 2001.
\newblock Differential geometric and analytic perspectives.

\bibitem[Che99]{Cheeger99}
J.~Cheeger.
\newblock Differentiability of {L}ipschitz functions on metric measure spaces.
\newblock {\em Geom. Funct. Anal.}, 9(3):428--517, 1999.

\bibitem[DS91]{David:1991}
G.~David and S.~Semmes.
\newblock Singular integrals and rectifiable sets in {${\bf R}^n$}: {B}eyond
  {L}ipschitz graphs.
\newblock {\em Ast\'{e}risque}, page 152, 1991.

\bibitem[DS97]{DS97}
Guy David and Stephen Semmes.
\newblock {\em Fractured fractals and broken dreams}, volume~7 of {\em Oxford
  Lecture Series in Mathematics and its Applications}.
\newblock The Clarendon Press, Oxford University Press, New York, 1997.
\newblock Self-similar geometry through metric and measure.

\bibitem[EBG22]{EBG}
Sylvester Eriksson-Bique and Jasun Gong.
\newblock Isoperimetric and {P}oincar\'{e} inequalities on non-self-similar
  {S}ierpi\'{n}ski sponges: the borderline case.
\newblock {\em Anal. Geom. Metr. Spaces}, 10(1):373--393, 2022.

\bibitem[Her87]{Herron1}
David~A. Herron.
\newblock The geometry of uniform, quasicircle, and circle domains.
\newblock {\em Ann. Acad. Sci. Fenn. Ser. A I Math.}, 12(2):217--227, 1987.

\bibitem[Her22]{Herron2}
David~A. Herron.
\newblock Constructing uniform spaces.
\newblock {\em Ann. Fenn. Math.}, 47(2):1053--1064, 2022.

\bibitem[HH08]{HH08}
Hrant Hakobyan and David~A. Herron.
\newblock Euclidean quasiconvexity.
\newblock {\em Ann. Acad. Sci. Fenn. Math.}, 33(1):205--230, 2008.

\bibitem[HK98]{HK}
Juha Heinonen and Pekka Koskela.
\newblock Quasiconformal maps in metric spaces with controlled geometry.
\newblock {\em Acta Math.}, 181(1):1--61, 1998.

\bibitem[HKST15]{HKST}
Juha Heinonen, Pekka Koskela, Nageswari Shanmugalingam, and Jeremy~T. Tyson.
\newblock {\em Sobolev spaces on metric measure spaces}, volume~27 of {\em New
  Mathematical Monographs}.
\newblock Cambridge University Press, Cambridge, 2015.
\newblock An approach based on upper gradients.

\bibitem[HLT18]{HLT}
David~A. Herron, Anton Lukyanenko, and Jeremy~T. Tyson.
\newblock Quasiconvexity in the {H}eisenberg group.
\newblock {\em Geom. Dedicata}, 192:157--170, 2018.

\bibitem[HVW17]{HVW17}
M.~Huang, M.~Vuorinen, and X.~Wang.
\newblock On removability properties of {$\psi$}-uniform domains in {B}anach
  spaces.
\newblock {\em Complex Anal. Oper. Theory}, 11(1):35--55, 2017.

\bibitem[Jer86]{Jerison86}
David Jerison.
\newblock The {P}oincar\'{e} inequality for vector fields satisfying
  {H}\"{o}rmander's condition.
\newblock {\em Duke Math. J.}, 53(2):503--523, 1986.

\bibitem[Jon90]{Jones90}
Peter~W. Jones.
\newblock Rectifiable sets and the traveling salesman problem.
\newblock {\em Invent. Math.}, 102(1):1--15, 1990.

\bibitem[KS01]{Kallunki:2001}
Sari Kallunki and Nageswari Shanmugalingam.
\newblock Modulus and continuous capacity.
\newblock {\em Ann. Acad. Sci. Fenn. Math.}, 26(2):455--464, 2001.

\bibitem[KZ08]{KZ08}
Stephen Keith and Xiao Zhong.
\newblock The {P}oincar\'{e} inequality is an open ended condition.
\newblock {\em Ann. of Math. (2)}, 167(2):575--599, 2008.

\bibitem[Laa00]{Laakso00}
T.~J. Laakso.
\newblock Ahlfors {$Q$}-regular spaces with arbitrary {$Q>1$} admitting weak
  {P}oincar\'{e} inequality.
\newblock {\em Geom. Funct. Anal.}, 10(1):111--123, 2000.

\bibitem[LML94]{Luukkainen:1994}
Jouni Luukkainen and Hossein Movahedi-Lankarani.
\newblock Minimal bi-{L}ipschitz embedding dimension of ultrametric spaces.
\newblock {\em Fund. Math.}, 144(2):181--193, 1994.

\bibitem[LW20]{LW20}
Alexander Lytchak and Stefan Wenger.
\newblock Canonical parameterizations of metric disks.
\newblock {\em Duke Math. J.}, 169(4):761--797, 2020.

\bibitem[Mac95]{MacManus:1995}
Paul MacManus.
\newblock Bi-{L}ipschitz extensions in the plane.
\newblock {\em J. Anal. Math.}, 66:85--115, 1995.

\bibitem[Mac99]{MacManus:1999}
Paul MacManus.
\newblock Catching sets with quasicircles.
\newblock {\em Rev. Mat. Iberoamericana}, 15(2):267--277, 1999.

\bibitem[MTW13]{MTW}
John~M. Mackay, Jeremy~T. Tyson, and Kevin Wildrick.
\newblock Modulus and {P}oincar\'{e} inequalities on non-self-similar
  {S}ierpi\'{n}ski carpets.
\newblock {\em Geom. Funct. Anal.}, 23(3):985--1034, 2013.

\bibitem[MW21]{MW21}
Damaris Meier and Stefan Wenger.
\newblock Quasiconformal almost parametrizations of metric surfaces, 2021.

\bibitem[Oki92]{Ok92}
Kate Okikiolu.
\newblock Characterization of subsets of rectifiable curves in {${\bf R}^n$}.
\newblock {\em J. London Math. Soc. (2)}, 46(2):336--348, 1992.

\bibitem[PSZ19]{PSZ19}
Andrea Pinamonti, Gareth Speight, and Scott Zimmerman.
\newblock A {$C^m$} {W}hitney extension theorem for horizontal curves in the
  {H}eisenberg group.
\newblock {\em Trans. Amer. Math. Soc.}, 371(12):8971--8992, 2019.

\bibitem[Sem96]{Semmes96}
S.~Semmes.
\newblock Finding curves on general spaces through quantitative topology, with
  applications to {S}obolev and {P}oincar\'{e} inequalities.
\newblock {\em Selecta Math. (N.S.)}, 2(2):155--295, 1996.

\bibitem[Ste70]{Stein:1970}
Elias~M. Stein.
\newblock {\em Singular integrals and differentiability properties of
  functions}.
\newblock Princeton Mathematical Series, No. 30. Princeton University Press,
  Princeton, N.J., 1970.

\bibitem[SZ23]{SZ22}
Gareth Speight and Scott Zimmerman.
\newblock A {$C^{m,\omega}$} {W}hitney {E}xtension {T}heorem for {H}orizontal
  {C}urves in the {H}eisenberg {G}roup.
\newblock {\em J. Geom. Anal.}, 2023.

\bibitem[TV84]{TV84}
R.E. Tarjan and U.~Vishkin.
\newblock Finding biconnected componemts and computing tree functions in
  logarithmic parallel time.
\newblock In {\em 25th Annual Symposium onFoundations of Computer Science,
  1984.}, pages 12--20, 1984.

\bibitem[V{\"a}i88]{Vais88}
Jussi V{\"a}is{\"a}l{\"a}.
\newblock Uniform domains.
\newblock {\em Tohoku Math. J. (2)}, 40(1):101--118, 1988.

\bibitem[Var86]{Varopoulos}
Nicholas~Th. Varopoulos.
\newblock Analyse sur les groupes unimodulaires.
\newblock {\em C. R. Acad. Sci. Paris S\'{e}r. I Math.}, 303(4):93--95, 1986.

\bibitem[Whi34]{W34}
Hassler Whitney.
\newblock Analytic extensions of differentiable functions defined in closed
  sets.
\newblock {\em Trans. Amer. Math. Soc.}, 36(1):63--89, 1934.

\bibitem[Zim18]{Z18}
Scott Zimmerman.
\newblock The {W}hitney extension theorem for {$C^1$}, horizontal curves in the
  {H}eisenberg group.
\newblock {\em J. Geom. Anal.}, 28(1):61--83, 2018.

\bibitem[Zim22]{Z22}
Scott Zimmerman.
\newblock Whitney's {E}xtension {T}heorem and the finiteness principle for
  curves in the {H}eisenberg group.
\newblock {\em Rev. Mat. Iberoam.}, 2022.

\end{thebibliography}
\end{document}